\newcommand{\circt}%
{\mathbin{%
\mathchoice
{\ooalign{$\ocircle$\cr\hidewidth\raise-.15ex\hbox{$\scriptstyle\top\mkern2.05mu$}\cr}}
{\ooalign{$\ocircle$\cr\hidewidth\raise-.15ex\hbox{$\scriptstyle\top\mkern2.05mu$}\cr}}
{\ooalign{$\scriptstyle\ocircle$\cr\hidewidth\raise-.12ex\hbox{$\scriptscriptstyle\top\mkern1mu$}\cr}}
{\ooalign{$\scriptstyle\ocircle$\cr\hidewidth\raise-.12ex\hbox{$\scriptscriptstyle\top\mkern1mu$}\cr}}
}}
\DeclareFontFamily{U}{matha}{\hyphenchar\font45}
\DeclareFontShape{U}{matha}{m}{n}{
      <5> <6> <7> <8> <9> <10> gen * matha
      <10.95> matha10 <12> <14.4> <17.28> <20.74> <24.88> matha12
      }{}
\DeclareSymbolFont{matha}{U}{matha}{m}{n}
\DeclareFontFamily{U}{mathb}{\hyphenchar\font45}
\DeclareFontShape{U}{mathb}{m}{n}{
      <5> <6> <7> <8> <9> <10> gen * mathb
      <10.95> mathb10 <12> <14.4> <17.28> <20.74> <24.88> mathb12
      }{}
\DeclareSymbolFont{mathb}{U}{mathb}{m}{n}
\DeclareMathSymbol{\ovoid}{3}{matha}{"6C}
\DeclareMathSymbol{\boxvoid}{2}{mathb}{"6C}
\mathchardef\mhyph="2D
\numberwithin{equation}{section}
\newtheorem{theorem}{Theorem}[section]
\newtheorem{corollary}[theorem]{Corollary}
\newtheorem{lemma}[theorem]{Lemma}
\newtheorem{proposition}[theorem]{Proposition}
\theoremstyle{remark}
\newtheorem{remark}[theorem]{Remark}
\newtheorem{example}[theorem]{Example}
\theoremstyle{definition}
\newtheorem{definition}[theorem]{Definition}
\newcommand\bp{\begin{proof}}
\newcommand\ep{\end{proof}}
\newcommand\ee{\nopagebreak\mbox{\ }\hfill$\diamond$}
\DeclareMathOperator{\Ad}{Ad}
\newcommand\Corep{\operatorname{Corep}}
\DeclareMathOperator{\Hilbf}{\operatorname{Hilb}_{\mathrm f}}
\DeclareMathOperator{\Mat}{Mat}
\DeclareMathOperator{\Mor}{Mor}
\DeclareMathOperator{\Rep}{Rep}
\DeclareMathOperator{\Tr}{Tr}
\DeclareMathOperator{\Hom}{Hom}
\newcommand\Ch{\operatorname{Ch}}
\DeclareMathOperator{\Vectf}{\operatorname{Vect}_{\mathrm f}}
\newcommand{\C}{{\mathbb C}}
\newcommand{\Z}{{\mathbb Z}}
\newcommand{\R}{{\mathbb R}}
\newcommand\T{{\mathbb T}}
\newcommand{\A}{{\mathcal A}}
\newcommand{\B}{{\mathcal B}}
\newcommand\CC{{\mathcal C}}
\newcommand\E{\mathcal E}
\newcommand{\F}{{\mathcal F}}
\newcommand\HH{\mathcal H}
\newcommand{\K}{{\mathcal K}}
\newcommand\OO{\mathcal O}
\newcommand\TT{\mathcal T}
\newcommand\id{\mathrm{id}}
\newcommand\eps{\varepsilon}
\newcommand{\GL}{\mathrm{GL}}
\newcommand{\GA}{\tilde{O}_P^+}
\newcommand{\GC}{\tilde{O}_Q^+}
\newcommand{\FP}{\mathcal{F}_P}
\newcommand{\TP}{\mathcal{T}_P}
\newcommand{\OP}{\mathcal{O}_P}
\begin{document}

\title[Subproduct systems II]{Subproduct systems with quantum group symmetry. II}

\author{Erik Habbestad}
\address{University of Oslo, Mathematics institute}
\email{erikhab@math.uio.no}

\author{Sergey Neshveyev}
\email{sergeyn@math.uio.no}

\thanks{Supported by the NFR project 300837 ``Quantum Symmetry''.}

\begin{abstract}
We complete our analysis of the Temperley--Lieb subproduct systems, which define quantum analogues of Arveson's $2$-shift, by extending the main results of the previous paper to the general parameter case. Specifically, we show that the associated Toeplitz algebras are nuclear, find complete sets of relations for them, prove that they are equivariantly $KK$-equivalent to $\C$ and compute the $K$-theory of the associated Cuntz--Pimsner algebras. A key role is played by quantum symmetry groups, first studied by Mrozinski, preserving Temperley--Lieb polynomials up to rescaling, and their monoidal equivalence to~$U_q(2)$. In Appendix we show how to adapt Voigt's arguments for $SU_q(2)$ to establish the Baum--Connes conjecture for the dual of $U_q(2)$, which is needed in our analysis of $K$-theory.
\end{abstract}

\subjclass[2020]{46L52, 46L67, 46L80}
\keywords{Subproduct systems, quantum groups, KK-theory}

\date{December 16, 2022; revised February 19, 2025}

\maketitle

\section*{Introduction}

Subproduct systems of finite dimensional Hilbert spaces and associated Toeplitz algebras appear in several areas of operator algebras. From the point of view of noncommutative geometry, their study can be seen as doing function theory on algebraic subsets of a noncommutative unit ball~\cite{AP,Shalit-Solel}. Specifically, one studies $m$-tuples of operators $(S_1,\dots,S_m)$ that satisfy the inequality $\sum_iS_iS_i^*\le 1$ and a system of noncommutative homogeneous polynomial equations $P(S_1,\dots,S_m)=0$.

In the previous paper~\cite{HN21}, motivated by recent work of Andersson~\cite{andersson} and Arici--Kaad~\cite{Arici-Kaad}, we began to analyze a class of \emph{Temperley--Lieb} subproduct systems, where we are given just one noncommutative quadratic polynomial $P=\sum^m_{i,j=1}a_{ij}X_iX_j$ such that the matrix~$A\bar A$ is unitary up to a scalar factor. We succeeded in obtaining fairly detailed information about the associated C$^*$-algebras under the more restrictive assumption $A\bar A=\pm1$. In particular, we showed that the corresponding Toeplitz algebras are nuclear, found complete sets of relations for them, proved that these C$^*$-algebras are $KK$-equivalent to $\C$ and computed the $K$-theory of the associated Cuntz--Pimsner algebras. One of the key points of~\cite{HN21} was to exploit properties of the quantum group $O^+_P$ of transformations leaving~$P$ invariant. The quantum groups $O^+_P$ form one of the most studied classes of compact quantum groups in the operator algebraic literature and are known as free orthogonal quantum groups.

For general Temperley--Lieb subproduct systems the quantum groups $O^+_P$ are usually too small to be very useful, in the sense that the canonical representation of $O^+_P$ on $\C^m$ is reducible. But we can consider the quantum group $\GA$ of transformations that leave $P$ invariant up to a phase factor. It is a peculiar feature of noncommutativity that this quantum group can be much larger than $O^+_P$: it turns out that the representation of $\GA$ on $\C^m$ is irreducible exactly for the Temperley--Lieb subproduct systems. The quantum groups $\GA$ were introduced and studied by Mrozinski~\cite{Mrozinski}, but so far they have received much less attention than the free orthogonal quantum groups. As a consequence we need to prove a number of results for $\GA$ that were readily available for $O^+_P$, but once this is done, the strategy developed in~\cite{HN21} allows us to analyze the Temperley--Lieb subproduct systems in full generality.

\smallskip

In more detail, the contents of the paper are as follows. In Section~\ref{sec:monoidal}, after a brief review of compact quantum groups, we study monoidal equivalence between them and compare its implementation by the so called bi-Galois objects in the purely algebraic, $*$-algebraic and C$^*$-algebraic settings. The first such comparison was carried out by Bichon~\cite{bichon-gal}. Later the theory was developed from scratch in the C$^*$-algebraic setting by Bichon, De Rijdt and Vaes~\cite{BDRV}. We complement the results of these papers by showing that there is no much difference between the three settings. In particular, we show that if we are given a $*$-bi-Galois object, then it is always possible to slightly modify the $*$-structure in order to get a $*$-algebra admitting a C$^*$-completion (Theorem~\ref{thm:Galois-objects}). Furthermore, whether such a modification is needed at all can be easily seen from relations in the algebra (Corollary~\ref{cor:star-criterion}). We need these results only for the quantum groups $\GA$, but hopefully they will be useful in other contexts as well.

\smallskip

In Section~\ref{sec:TL} we recall definition of the Temperley--Lieb subproduct systems and their associated Toeplitz and Cuntz--Pimsner algebras $\TT_P$ and $\OO_P$. Then, following~\cite{Mrozinski}, we formally introduce the quantum groups $\GA$. By combining results of Section~\ref{sec:monoidal} and~\cite{Mrozinski} we obtain C$^*$-$U_q(2)$-$\GA$-Galois objects $B(U_q(2),\GA)$, where $U_q(2)$ is the $q$-deformation of the unitary group $U(2)$ for an appropriate $q\in(0,1]$ depending on~$P$. In~\cite{HN21} we showed that in the free orthogonal case we have an isomorphism $B(SU_q(2),O^+_P)\cong\OO_P$. In the general case the situation is a bit more complicated: we have an isomorphism  $B(U_q(2),\GA)\cong\OO_P\rtimes_{\bar\beta}\Z$ for some gauge-type automorphism $\bar\beta$ (Theorem~\ref{thm:CP-crossed}). This is still enough to find a complete set of relations for $\OO_P$ and then for $\TT_P$ (Theorem~\ref{thm:relations}).

In the simplest cases
$$
P=X_1X_2-qX_2X_1\quad (q\in\C^\times)
$$
this shows that the C$^*$-algebras $\OO_P$ are the function algebras on the (braided for complex~$q$) quantum groups $SU_{\bar q}(2)$. In hindsight it is very natural that we get a deformation of $SU(2)$, since for $q=1$ we have Arveson's symmetric subproduct system $SSP_2$ and the associated Cuntz--Pimsner algebra is $C(S^3)\cong C(SU(2))$~\cite{arveson}. More generally, in view of monoidal equivalences $\GA\sim_\otimes U_q(2)$ and the categorical nature of Toeplitz and Cuntz--Pimsner algebras~\cite{HN21}, the algebras $\OO_P$ for arbitrary Temperley--Lieb subproduct systems can be viewed as quantum relatives of $C(S^3)$.

It is also worth mentioning that the isomorphism $B(U_q(2),\GA)\cong\OO_P\rtimes_{\bar\beta}\Z$ provides a new route to the technical core of Mrozinski's work and therefore can be used to give an alternative proof of the monoidal equivalence $\GA\sim_\otimes U_q(2)$ (Remark~\ref{rem:connected}). In our opinion this alone makes the Temperley--Lieb subproduct systems interesting from the quantum group theoretic perspective.

\smallskip

In Section~\ref{sec:K-theory-T} we prove that the embedding $\C\to\TT_P$ is an isomorphism in the equivariant $KK$-category $KK^{\GA}$ (Theorem~\ref{thm:KK-Toeplitz}). In the free orthogonal case studied in~\cite{HN21} we used the Baum--Connes conjecture for the dual of $SU_q(2)$ to prove a similar result. This conjecture, established by Voigt~\cite{Voigt-BC}, says roughly that arbitrary separable $SU_q(2)$-C$^*$-algebras can be built out of C$^*$-algebras with trivial action. In this paper we need a similar result for $U_q(2)$. We do not see a quick way of deducing this from the case of $SU_q(2)$, but fortunately most of Voigt's arguments upgrade from $SU_q(2)$ to $U_q(2)$ using general considerations, without going into technical details of the proof in~\cite{Voigt-BC}. In order to maintain the flow of the exposition, we defer details to Appendix~\ref{app:A}. Once we accept the Baum--Connes conjecture for the dual of~$U_q(2)$, the proof of the $KK^{\GA}$-equivalence between $\C$ and $\TT_P$ is similar to~\cite{HN21}, although the computations become a bit more involved.

\smallskip

In Section~\ref{sec:K-theory-O} we compute the $K$-theory of the C$^*$-algebras $\OO_P$ (Corollary~\ref{cor:K-theory-O}). For this we first find an inverse of the embedding map $\C\to\TT_P$ in $KK^{\GA}$. The construction is an adaptation of that due to Arici--Kaad~\cite{Arici-Kaad} for a particular class of Temperley--Lieb subproduct systems, which is in turn inspired by a well-known construction for the usual Toeplitz--Pimsner algebras. It is easy to see that the class that we define provides a one-sided inverse to $\C\to\TT_P$ in~$KK^{\GA}$. But this is enough, since we already know from Section~\ref{sec:K-theory-T} that an inverse exists. This circumvents a direct but long and involved argument in~\cite{Arici-Kaad}, which probably can be adapted to arbitrary Temperley--Lieb subproduct systems. Once an inverse is found, the computation of the $K$-theory of $\OO_P$ is a simple application of the six-term exact sequence in $K$-theory.

\bigskip

\section{Compact quantum groups and monoidal equivalence}\label{sec:monoidal}

For an introduction to compact quantum groups and C$^*$-tensor categories we refer the reader to \cite{NT13}, and for more details about actions of compact quantum groups to~\cite{DC}. We will briefly recall the main definitions and key facts.

\medskip

Let $\mathcal{A}$ be a Hopf $*$-algebra with coproduct $\Delta\colon\mathcal{A}\to \mathcal{A}\otimes \mathcal{A}$, counit $\varepsilon\colon \mathcal{A}\to\C$ and antipode $S\colon\mathcal{A}\to \mathcal{A}$. Recall that a right $\mathcal{A}$-comodule consists of a vector space $H$ and a linear map $\delta\colon H \to H \otimes \mathcal{A}$ satisfying
\[(\iota\otimes\Delta)\delta = (\delta\otimes\iota)\delta \quad \mbox{ and } \quad (\iota\otimes\varepsilon)\delta = \iota, \]
where $\iota$ denotes the identity map. We write $\Corep_f\mathcal{A}$ for the $\C$-linear tensor category of finite dimensional $\mathcal{A}$-comodules. An object $(H,\delta) \in \Corep_f\mathcal{A}$ is the same thing as an element $U \in B(H) \otimes \mathcal{A}$ satisfying
\[(\iota\otimes\Delta)(U) = U_{12}U_{13}\quad  \mbox{ and } \quad (\iota\otimes\varepsilon)(U) = 1.\]
The corresponding map $\delta\colon H \to H \otimes \mathcal{A}$ is then given by $\delta(\xi) = U(\xi\otimes 1)$. The comodule $(H,\delta)$ is said to be unitary if $H$ is a Hilbert space and the element $U \in B(H) \otimes \mathcal{A}$ is unitary. Elements of the form $((\,\cdot\, \xi, \zeta)\otimes \iota)(U) \in \mathcal{A}$, for $\xi,\zeta \in H$, are called matrix coefficients of $U$ (or $\delta$).

\begin{definition}
A \emph{compact quantum group} $G$ is a Hopf $*$-algebra $(\C[G], \Delta, \varepsilon, S)$ that is spanned by matrix coefficients of finite dimensional unitary comodules.
\end{definition}

A finite dimensional unitary representation of $G$ is by definition a finite dimensional unitary $\C[G]$-comodule. We write $\Rep G$ for the C$^*$-tensor category of finite dimensional \textit{unitary} representations of $G$. A fundamental fact is that the functor $\Rep G \to \Corep_f \C[G]$ forgetting the Hilbert space structures is an equivalence of $\C$-linear tensor categories. Two compact quantum groups $G_1$ and $G_2$ are said to be monoidally equivalent, which we write as $G_1 \sim_\otimes G_2$, if there is a unitary monoidal equivalence between $\Rep G_1$ and $\Rep G_2$.

For every compact quantum group $G$, there is a unique Haar state $h$ on $\C[G]$, and we denote the corresponding GNS-space by $L^2(G)$. The norm closure of $\C[G] \subset B(L^2(G))$ is denoted by~$C(G)$ and is called the (reduced) C$^*$-algebra of functions on $G$. If $C(G)$ is the only C$^*$-completion of $\C[G]$, then $G$ is called coamenable.

\smallskip

A right action by $G$ on a C$^*$-algebra $B$ is a $*$-homomorphism $\alpha\colon B \to B \otimes C(G)$ such that
$$
(\iota\otimes \Delta)\alpha = (\alpha\otimes \iota)\alpha
$$
and there is a dense $*$-subalgebra of $B$ on which $\alpha$ defines a right $\C[G]$-comodule structure. The largest such subalgebra is called the algebra of regular elements, or the algebraic core of $B$, and is denoted by $\mathcal B$. When $G$ acts on $B$, we say also that $B$ is a (right) $G$-C$^*$-algebra.  Left actions and left $G$-C$^*$-algebras are defined analogously.

Given a right action $\alpha$ on $B$, there is always a conditional expectation onto the fixed point algebra $B^G$:
\[E = (\iota\otimes h)\alpha\colon B \to B^G = \{ \, b \in B \, | \, \alpha(b) = b \otimes 1 \, \}. \]
The action is called reduced if $E$ is faithful, equivalently, $\alpha$ is injective. By passing from $B$ to $B/(\ker\alpha)$ we can always go from any action to a reduced one with the same algebraic core. If~$G$ is coamenable, then all actions are reduced.

An action $\alpha$ of $G$ on a unital C$^*$-algebra $B$ is called ergodic if $B^G = \C 1$. In this case the algebraic core of $B$ is the only dense $*$-subalgebra of $B$ on which $\alpha$ defines a $\C[G]$-comodule structure.

A $*$-homomorphism $\phi\colon B \to C$ between $G$-C$^*$-algebras is called equivariant if $\alpha_C \circ \phi = (\phi \otimes \iota) \circ \alpha_B$. A simple but very useful observation is that if the action on $B$ is reduced, then $\phi$ is injective if and only if it is injective on $B^G$. In particular, if the action on $B$ is reduced and ergodic, then $\phi$ is either injective or zero.

\begin{definition}
A unital $G$-C$^*$-algebra $(B,\alpha)$ is called a right \emph{C$^*$-$G$-Galois extension} of $\C$ if the action is ergodic and the map
\[ \mathcal{B} \otimes \mathcal{B} \to \mathcal{B} \otimes \C[G], \quad b \otimes b' \mapsto (b\otimes 1)\alpha(b'), \]
is a linear isomorphism. Such actions are also called \emph{ergodic actions of full quantum multiplicity}.
\end{definition}

One can also consider $*$-$G$-Galois extensions (drop the condition that there is a C$^*$-comple\-tion~$B$ of~$\mathcal{B}$ above), and simply $G$-Galois extensions (forget the $*$-structure as well). Left $G$-Galois extensions are defined similarly.

There is a one-to-one correspondence between the isomorphism classes of left $G$-Galois extensions of $\C$ and the natural monoidal isomorphism classes of fiber functors $\Corep_f\C[G]\to\Vectf$. Namely, given a $G$-Galois extension~$\B$ of~$\C$, the corresponding fiber functor $\F$, with its monoidal structure $\F_2$, is given by
$$
\F(M):=M \boxvoid_{G} \mathcal{B} = \{x \in M \otimes \mathcal{B} \, | \, (\delta \otimes \iota)(x) = (\iota\otimes \alpha)(x)\},
$$
$$
\F_{2;M,N}\colon \F(M)\otimes\F(N)\to\F(M\otimes N),\qquad x\otimes y\mapsto x_{13}y_{23},
$$
for finite dimensional $\C[G]$-comodules $M$ and $N$. Similarly, there is a one-to-one correspondence between the isomorphism classes of reduced left C$^*$-$G$-Galois extensions of $\C$ and the natural unitary monoidal isomorphism classes of unitary fiber functors $\Rep G\to\Hilbf$; we will return to how to define scalar products on $\F(H)$ for finite dimensional unitary $\C[G]$-comodules $(H,\delta)$ in a moment.

\begin{definition}
Assume that $G_1$ and $G_2$ are two compact quantum groups and that $B$ is a C$^*$-algebra with a left action by $G_1$ and a right action by $G_2$: \[ C(G_1)\otimes B \xleftarrow{\delta_1} B \xrightarrow{\delta_2} B \otimes C(G_2).\]
We say that $B$ is a \emph{C$^*$-$G_1$-$G_2$-Galois object}, or a \emph{linking algebra}, if the actions commute and~$B$ is both a left C$^*$-$G_1$-Galois extension of $\C$ and a right C$^*$-$G_2$-Galois extension of $\C$.
\end{definition}

A C$^*$-$G_1$-$G_2$-Galois object exists if and only if $G_1 \sim_{\otimes} G_2$. Again, we can also define $*$-$G_1$-$G_2$-Galois objects and $G_1$-$G_2$-Galois objects.
Existence of a $G_1$-$G_2$-Galois object is equivalent to having an equivalence of the $\C$-linear tensor categories $\Corep_f\C[G_1]$ and $\Corep_f\C[G_2]$. Then $\Rep G_1$ and $\Rep G_2$ are also equivalent as $\C$-linear tensor categories, and the next result implies that in fact $G_1 \sim_{\otimes} G_2$. This is probably known to the experts and the proof is a simple application of polar decomposition, cf.~\cite[Lemma~4.3.3]{NT13}.

\begin{proposition} \label{prop:polar1}
Assume $\CC_1$ and $\CC_2$ are small semisimple C$^*$-tensor categories that are equivalent as $\C$-linear monoidal categories. Then they are unitarily monoidally equivalent.
\end{proposition}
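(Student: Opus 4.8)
The plan is to take an arbitrary $\C$-linear monoidal equivalence $(F,F_2,F_0)\colon\CC_1\to\CC_2$, with invertible structure morphisms $F_0\colon\un_{\CC_2}\to F(\un_{\CC_1})$ and $F_{2;X,Y}\colon F(X)\otimes F(Y)\to F(X\otimes Y)$, and to deform it into a unitary one in two stages, altering the underlying functor only up to natural isomorphism. For the first stage I would observe that an equivalence of semisimple categories sends simple objects to simple objects, and that on a semisimple C$^*$-category the $*$-operation is determined up to isomorphism by the underlying $\C$-linear category: the endomorphism ring of each object is a finite-dimensional C$^*$-algebra, hence carries a unique C$^*$-norm, and any two positive involutions on it are unitarily conjugate. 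Consequently $F$ is naturally isomorphic to a $\C$-linear $*$-functor (cf.~\cite{NT13}); transporting $(F_2,F_0)$ along such a natural isomorphism and renaming, we may assume that $F$ is itself unitary, still carrying a monoidal structure with invertible structure morphisms. (One could also replace $\CC_1$ and $\CC_2$ by unitarily equivalent strict C$^*$-tensor categories at this point; I would instead keep the associators explicit.)

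The substance of the argument is the second stage. Write $F_{2;X,Y}=v_{X,Y}\,|F_{2;X,Y}|$ with $|F_{2;X,Y}|=(F_{2;X,Y}^{*}F_{2;X,Y})^{1/2}$ positive invertible and $v_{X,Y}$ unitary, and $F_0=v_0\,|F_0|$ with $|F_0|=(F_0^{*}F_0)^{1/2}$ a positive scalar in $\End(\un_{\CC_2})=\C$ and $v_0$ a unitary isomorphism. Because $F$ is now a $*$-functor and the tensor product of a C$^*$-tensor category is a $*$-bifunctor, $F_2$ is a natural isomorphism between the two unitary functors $(X,Y)\mapsto F(X)\otimes F(Y)$ and $(X,Y)\mapsto F(X\otimes Y)$ from $\CC_1\times\CC_1$ to $\CC_2$; applying $*$ to the naturality squares for adjoint morphisms and using continuous functional calculus then shows that the polar parts $v=(v_{X,Y})$ and $|F_2|=(|F_{2;X,Y}|)$ are themselves natural isomorphisms. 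I claim $(F,v,v_0)$ is a unitary monoidal functor. For the associativity axiom one substitutes $F_2=v\,|F_2|$ into the pentagon identity for $(F,F_2,F_0)$; naturality of $|F_2|$ allows one to commute the positive factors past one another, so that their product stays positive, and past the unitary associators of $\CC_1$ and $\CC_2$, so that each side of the identity is displayed in the form (unitary)$\,\circ\,$(positive); uniqueness of polar decomposition then forces the unitary parts to coincide, which is exactly the associativity axiom for $v$. The unit axioms are treated similarly, the extra point being that comparing moduli on the two sides of the unit axiom for $(F,F_2,F_0)$ forces $|F_{2;X,\un}|$ and $|F_{2;\un,X}|$ to equal the scalar $|F_0|^{-1}$, after which the unit axioms for $(F,v,v_0)$ drop out. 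Finally, $(F,v,v_0)$ is still an equivalence of categories because $F$ is, hence a unitary monoidal equivalence, so $\CC_1\sim_\otimes\CC_2$.

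The step I expect to be the main obstacle is the bookkeeping just described: one must make sure that, after substituting the polar decompositions into the coherence axioms, the positive factors genuinely recombine into a single positive morphism on each side, so that uniqueness of polar decomposition can be invoked. This hinges entirely on the naturality of the positive parts $|F_2|$, which is precisely why the reduction to a $*$-functor $F$ in the first stage cannot be skipped: for a merely $\C$-linear equivalence the polar decomposition of $F_2$ need not be compatible with the tensor structure at all.
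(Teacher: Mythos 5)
Your plan is the paper's plan (first make the functor unitary, then polar--decompose $F_2$ and argue via uniqueness of polar decomposition), and your stage 1 as well as the naturality of the polar parts are fine; but there is a genuine gap at exactly the step you flag as the crux. With the source-side decomposition $F_{2;X,Y}=v_{X,Y}\,|F_{2;X,Y}|$, the left side of the associativity constraint (say in the strict case) reads $v_{X\otimes Y,Z}\,|F_{2;X\otimes Y,Z}|\,(v_{X,Y}\otimes\iota)\,(|F_{2;X,Y}|\otimes\iota)$, and to display it as $(\text{unitary})\circ(\text{positive})$ you must know that $(v_{X,Y}^{*}\otimes\iota)\,|F_{2;X\otimes Y,Z}|\,(v_{X,Y}\otimes\iota)$ commutes with $|F_{2;X,Y}|\otimes\iota$. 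Naturality of $|F_2|$ does not give this: $|F_2|$ is a natural endomorphism of the functor $(X,Y)\mapsto F(X)\otimes F(Y)$ on $\CC_1\times\CC_1$, so it is only known to slide past morphisms of the form $F(f)\otimes F(g)$ with $f,g$ morphisms of $\CC_1$, and neither $v_{X,Y}\otimes\iota$ nor the other positive factor is of that form (these are morphisms of $\CC_2$ between objects such as $F(X)\otimes F(Y)$, which are not in the image of $F$). So ``commuting the positive factors past one another'' is precisely the unproved assertion, and it is not a formal consequence of naturality; two positive morphisms need not commute.

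The paper's device, which is the missing idea, is to take the positive part on the target side and pull it back through the functor: since after unitarization $F$ is a full, faithful $*$-equivalence, one can write $F_{2;X,Y}=F(\Omega_{X,Y})\,U_{X,Y}$ with $\Omega_{X,Y}$ a strictly positive element of $\End_{\CC_1}(X\otimes Y)$ (and $U_{X,Y}=v_{X,Y}$, the unitary part being the same for either order). Then $\Omega$ is a natural endomorphism of $\otimes$ on $\CC_1$, and the needed interchanges become honest instances of naturality with respect to genuine $\CC_1$-morphisms: naturality of $U$ applied to $\Omega_{X,Y}\otimes\iota_Z$ moves the positive factor across the unitary, and naturality of $\Omega$ applied to the same morphism shows that $\Omega_{X\otimes Y,Z}$ and $\Omega_{X,Y}\otimes\iota$ commute, so the combined positive factor is again positive and uniqueness of polar decomposition finishes the argument. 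In your notation the same repair reads: $v_{X,Y}\,|F_{2;X,Y}|\,v_{X,Y}^{*}=|F_{2;X,Y}^{*}|=F(\Omega_{X,Y})$ by fullness and $*$-preservation of $F$, and it is this identification (not naturality of $|F_2|$ alone) that yields the commutation you need. With that ingredient added, your treatment of the unit via $|F_0|$ and the conclusion go through; as written, however, the recombination step fails to be justified.
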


\bp
We may assume that $\CC_1$ and $\CC_2$ are strict. Let $\F\colon\CC_1\to\CC_2$ be an equivalence of the $\C$-linear monoidal categories. If we ignore the tensor structures, then such an equivalence is determined uniquely, up to a natural isomorphism, by a bijection between the isomorphism classes of simple objects, and every such bijection can be used to define a unitary functor. Therefore without loss of generality we may assume that the functor $\F$, without its tensor structure, is already unitary.

Consider the isomorphisms
$$
\F_{2;X,Y}\colon\F(X)\otimes\F(Y)\to\F(X\otimes Y)
$$
defining the tensor structure on $\F$. Using the polar decomposition we can write
$$
\F_{2;X,Y}=\F(\Omega_{X,Y})U_{X,Y}
$$
for uniquely defined unitary morphism $U_{X,Y}\colon \F(X)\otimes\F(Y)\to\F(X\otimes Y)$ and strictly positive morphism
$\Omega_{X,Y}\colon X\otimes Y\to X\otimes Y$. By naturality of $\F_2$ these morphisms are natural in~$X$ and~$Y$.

By naturality of $U$ we have
\begin{align*}
\F_{2;X\otimes Y,Z}(\F_{2;X,Y}\otimes\iota)&=\F(\Omega_{X\otimes Y,Z})U_{X\otimes Y,Z}(\F(\Omega_{X,Y})\otimes\iota)(U_{X,Y}\otimes\iota)\\
&=\F(\Omega_{X\otimes Y,Z}(\Omega_{X,Y}\otimes\iota))U_{X\otimes Y,Z}(U_{X,Y}\otimes\iota).
\end{align*}
Note that by naturality of $\Omega$ the morphisms $\Omega_{X\otimes Y,Z}$ and $\Omega_{X,Y}\otimes\iota$ commute and hence their composition is strictly positive.
For the same reasons we have
$$
\F_{2;X, Y\otimes Z}(\iota\otimes \F_{2;Y,Z})=\F(\Omega_{X,Y\otimes Z}(\iota\otimes\Omega_{Y,Z}))U_{X,Y\otimes Z}(\iota\otimes U_{Y,Z}),
$$
and the morphism $\Omega_{X,Y\otimes Z}(\iota\otimes\Omega_{Y,Z})$ is strictly positive. The identity
$$
\F_{2;X\otimes Y,Z}(\F_{2;X,Y}\otimes\iota)=\F_{2;X, Y\otimes Z}(\iota\otimes \F_{2;Y,Z})
$$
and uniqueness of polar decomposition imply now that
$$
\Omega_{X\otimes Y,Z}(\Omega_{X,Y}\otimes\iota)=\Omega_{X,Y\otimes Z}(\iota\otimes\Omega_{Y,Z}),\qquad
U_{X\otimes Y,Z}(U_{X,Y}\otimes\iota)=U_{X,Y\otimes Z}(\iota\otimes U_{Y,Z}).
$$
Therefore the morphisms $U$ can be used to define a new unitary tensor structure on $\F$.
\ep

In a similar way, using polar decomposition, it is not difficult to prove the following.

\begin{proposition} \label{prop:polar2}
If two unitary tensor functors between C$^*$-tensor categories are naturally monoidally isomorphic, then they are also naturally unitarily monoidally isomorphic. \end{proposition}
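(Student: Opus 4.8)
The plan is to run the polar-decomposition argument from the proof of Proposition~\ref{prop:polar1} in the setting of natural transformations; as there, I would assume all categories and functors strict to lighten notation. Let $\F,\mathcal{G}\colon\CC_1\to\CC_2$ be unitary tensor functors with tensor constraints $\F_2,\F_0$ and $\mathcal{G}_2,\mathcal{G}_0$, and let $\eta\colon\F\Rightarrow\mathcal{G}$ be a monoidal natural isomorphism, so that each $\eta_X\colon\F(X)\to\mathcal{G}(X)$ is invertible and
\[
\eta_{X\otimes Y}\,\F_{2;X,Y}=\mathcal{G}_{2;X,Y}\,(\eta_X\otimes\eta_Y),\qquad \eta_{\un}\,\F_0=\mathcal{G}_0.
\]
The first observation is that the adjoints $\eta_X^*\colon\mathcal{G}(X)\to\F(X)$ form a natural transformation $\mathcal{G}\Rightarrow\F$: applying $*$ to the naturality squares of $\eta$ and using that $\F$ and $\mathcal{G}$ are $*$-functors yields exactly the naturality squares of $\eta^*$. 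Consequently the family $b_X:=\eta_X^*\eta_X\colon\F(X)\to\F(X)$ is a natural transformation $\F\Rightarrow\F$, and each $b_X$ is strictly positive because $\eta_X$ is invertible.

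Next I would set $a_X:=b_X^{1/2}$ and $u_X:=\eta_X a_X^{-1}\colon\F(X)\to\mathcal{G}(X)$. Since a morphism intertwining two positive morphisms also intertwines their square roots (approximate $\sqrt{\,\cdot\,}$ uniformly by polynomials without constant term on an interval containing both spectra, and pass to the norm limit), the family $a$, and hence also $a^{-1}$, is again a natural transformation $\F\Rightarrow\F$. A direct computation gives $u_X^*u_X=a_X^{-1}b_Xa_X^{-1}=1$ and $u_Xu_X^*=\eta_X b_X^{-1}\eta_X^*=1$, so each $u_X$ is unitary, and for $f\colon X\to Y$ one has $u_Y\F(f)=\eta_Y a_Y^{-1}\F(f)=\eta_Y\F(f)a_X^{-1}=\mathcal{G}(f)\eta_X a_X^{-1}=\mathcal{G}(f)u_X$, so $u$ is natural.

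It remains to see that $u$ is monoidal, and here I would apply uniqueness of polar decomposition to the morphism $\eta_{X\otimes Y}\F_{2;X,Y}\colon\F(X)\otimes\F(Y)\to\mathcal{G}(X\otimes Y)$. On one side it equals $\bigl(u_{X\otimes Y}\F_{2;X,Y}\bigr)\bigl(\F_{2;X,Y}^*\,a_{X\otimes Y}\,\F_{2;X,Y}\bigr)$, a product of a unitary morphism (recall that $\F_{2;X,Y}$ is unitary) and a strictly positive one; on the other side, using monoidality of $\eta$, it equals $\bigl(\mathcal{G}_{2;X,Y}(u_X\otimes u_Y)\bigr)(a_X\otimes a_Y)$, again unitary times strictly positive. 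Uniqueness of the polar decomposition then gives $u_{X\otimes Y}\F_{2;X,Y}=\mathcal{G}_{2;X,Y}(u_X\otimes u_Y)$, which is precisely the compatibility of $u$ with the tensor constraints (and, as a by-product, $\F_{2;X,Y}^*a_{X\otimes Y}\F_{2;X,Y}=a_X\otimes a_Y$). For the unit, the identity $\eta_{\un}\F_0=\mathcal{G}_0$ together with unitarity of $\F_0$ and $\mathcal{G}_0$ forces $\eta_{\un}$ to be unitary, whence $a_{\un}=1$, $u_{\un}=\eta_{\un}$, and $u_{\un}\F_0=\mathcal{G}_0$. Thus $u\colon\F\Rightarrow\mathcal{G}$ is a natural unitary monoidal isomorphism.

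The only point requiring a moment's thought is the naturality of $a_X=|\eta_X|$: one must notice that it is $\eta^*\eta$ — which lives on $\F$ — rather than $\eta$ itself that is the relevant natural endomorphism, after which the continuous functional calculus finishes the job. Everything else is a formal consequence of uniqueness of polar decomposition, exactly as in the proof of Proposition~\ref{prop:polar1}.
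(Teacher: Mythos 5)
Your argument is correct, and it is precisely the polar-decomposition argument the paper has in mind: Proposition~\ref{prop:polar2} is stated without proof, with the remark that it follows "in a similar way, using polar decomposition" as Proposition~\ref{prop:polar1}, and your factorization $\eta_X=u_X(\eta_X^*\eta_X)^{1/2}$ together with uniqueness of the polar decomposition carries that out. The points you single out (naturality of $|\eta_X|$ via functional calculus, strict positivity of $a_X\otimes a_Y$, unitarity at the unit) are exactly the details that need checking, and you handle them correctly.
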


Note that in the proof of Proposition~\ref{prop:polar1} the new unitary monoidal equivalence is not necessarily naturally monoidal\-ly isomorphic to the one we started with, since the morphisms $\Omega$ may define a nontrivial tensor structure on the identity functor $\CC_1\to\CC_1$. For compact quantum groups this means that if there is a $G_1$-$G_2$-Galois object $\B$, then there is a also a C$^*$-$G_1$-$G_2$-Galois object, but it may not be possible to introduce a $*$-structure on $\B$ to define such an object by completion. Furthermore, an invariant $*$-structure alone (meaning that the action maps are $*$-preserving) is not enough to have a C$^*$-completion. Let us illustrate these phenomena with the following example.

\begin{example}
Let $\Gamma$ be a discrete group and consider the compact quantum group $G=\hat\Gamma$, so $\Rep G$ is the category of $\Gamma$-graded finite dimensional Hilbert spaces. Every element $g\in\Gamma$ defines a simple object in $\Rep G$ that we denote by $\C_g$. Every normalized cocycle $\omega\in Z^2(\Gamma;\C^\times)$, where normalization means $\omega(e,g)=\omega(g,e)=1$, defines a tensor functor $\Rep G\to \Rep G$ that is the identity functor on the objects and morphisms, but whose tensor structure is given by
$$
\C_g\otimes\C_h\xrightarrow{\omega(g,h)}\C_g\otimes\C_h=\C_{gh}.
$$
It is easy to see that this functor is naturally monoidally isomorphic to a unitary tensor functor if and only if $\omega$ is cohomologous to a $\T$-valued cocycle. For example, if $\Gamma=\Z^2$, then, up to coboundaries, every cocycle has the form $\omega(g,h)=z^{g_1h_2}$ for a uniquely defined $z\in\C^\times$, and it is cohomologous to a $\T$-valued one if and only if $z\in\T$.

Correspondingly, we have a $G$-$G$-Galois object $\C_\omega\Gamma$, the twisted group algebra of $\Gamma$ with generators $u_g$, $g\in\Gamma$, and relations
$$
u_gu_h=\omega(g,h)u_{gh}.
$$
The actions of $G$ are given by
$$
\delta_1\colon\C_\omega\Gamma\to\C\Gamma\otimes\C_\omega\Gamma,\qquad \delta_2\colon\C_\omega\Gamma\to\C_\omega\Gamma\otimes\C\Gamma,
$$
$\delta_1(u_g)=g\otimes u_g$, $\delta_2(u_g)=u_g\otimes g$. From the previous paragraph we can conclude that if $\omega$ is not cohomologous to a $\T$-valued cocycle, then $\C_\omega\Gamma$ does not complete to a C$^*$-$G$-$G$-Galois object, equivalently, as $\delta_1$ and $\delta_2$ are essentially the same, $(\C_\omega\Gamma,\delta_1)$ does not complete to a C$^*$-$G$-Galois extension of $\C$. But, in fact, it is not difficult to check directly that then there are no $*$-structures on $\C_\omega\Gamma$ such that $\delta_1$ is $*$-preserving.

If $\omega$ is $\T$-valued, then the standard $*$-structure on $\C_\omega\Gamma$ admitting a C$^*$-completion is given by $u_g^*=u_{g^{-1}}$. It is again not difficult to check that the $*$-structures for which $\delta_1$ is $*$-preserving are given by
$$
u_g^*=\chi(g)u_{g^{-1}}\quad\text{for a homomorphism}\quad\chi\colon\Gamma\to\R^\times.
$$
If $\chi(g)<0$ for some $g$, then $\C_\omega\Gamma$ with such a $*$-structure does not admit any nonzero $*$-representation on a Hilbert space, since $u_g^*u_g=\chi(g)1$.\ee
\end{example}

Let $G$ be a compact quantum group and $(\B,\delta)$ be a left $*$-$G$-Galois extension of $\C$. Consider the corresponding fiber functor
$\F\colon\Corep_f\C[G]\to\Vectf$. If $U\in B(H_U)\otimes\C[G]$ is a finite dimensional unitary representation of $G$, recall that we can view $H_U$ as a right  $\C[G]$-comodule by $\delta_U(\xi)=U(\xi\otimes1)$. Define a Hermitian form $\langle\cdot,\cdot\rangle$ on~$\F(H_U)$~by
\begin{equation}\label{eq:Hermitian}
\langle x,y\rangle 1=y^*x\in{}^{G}\,\,\!\!\B=\C1,
\end{equation}
where we view vectors in $H_U$ as operators $\C\to H_U$. In other words, if $(\xi_i)_i$ is an orthonormal basis in $H_U$, $x=\sum_i\xi_i\otimes x_i$ and $y=\sum_i\xi_i\otimes y_i$, then $\langle x,y\rangle1=\sum_iy_i^*x_i$. Note that given two unitary representations $U$ and $V$, the maps $\F_{2;H_U,H_V}$ are isometric with respect to the Hermitian forms we defined.

\smallskip

The following observation is implicit in the proof of~\cite[Proposition~4.3.1]{bichon-gal}.

\begin{lemma}\label{lem:Bichon}
The Hermitian form $\langle\cdot,\cdot\rangle$ on $\F(H_U)$ is nondegenerate.
\end{lemma}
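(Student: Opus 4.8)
The plan is to recognise the Hermitian form, up to a conjugate-linear bijection coming from the $*$-operation of $\B$, as the canonical evaluation pairing between $\F(H_U)$ and $\F(\bar H_U)$, where $\bar H_U$ is the conjugate comodule; nondegeneracy is then forced by the general fact that a tensor functor preserves duals.

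First I would record an explicit model for $\F(H_U)$. Fix an orthonormal basis $(\xi_i)_i$ of $H_U$, write $U=\sum_{i,j}e_{ij}\otimes u_{ij}$ with the matrix units $e_{ij}\in B(H_U)$, and let $\delta\colon\B\to\C[G]\otimes\B$ be the coaction. Unwinding the cotensor product, $\F(H_U)$ consists precisely of the $\sum_i\xi_i\otimes x_i\in H_U\otimes\B$ for which $\delta(x_i)=\sum_j u_{ij}\otimes x_j$ for every $i$. Since $U$ is unitary, the matrix $(u_{ij}^*)_{i,j}$ again satisfies the relations defining a corepresentation, so it gives an object $\bar H_U\in\Corep_f\C[G]$ with basis $(\bar\xi_i)_i$ and $\delta_{\bar H_U}(\bar\xi_j)=\sum_i\bar\xi_i\otimes u_{ij}^*$. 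Applying the $*$-operation of $\B$ to the relations above (which $\delta$ respects) gives $\delta(x_i^*)=\sum_j u_{ij}^*\otimes x_j^*$, exactly the membership condition for $\sum_i\bar\xi_i\otimes x_i^*$ in $\F(\bar H_U)$. Hence
$$
j\colon\F(H_U)\to\F(\bar H_U),\qquad\textstyle\sum_i\xi_i\otimes x_i\ \longmapsto\ \sum_i\bar\xi_i\otimes x_i^*,
$$
is a well-defined conjugate-linear bijection, its inverse being the analogous construction for $\bar H_U$ (whose conjugate is again $H_U$).

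Next I would verify that $e\colon\bar H_U\otimes H_U\to\C$, $\bar\xi_i\otimes\xi_j\mapsto\delta_{ij}$, and $c\colon\C\to H_U\otimes\bar H_U$, $1\mapsto\sum_i\xi_i\otimes\bar\xi_i$, are morphisms in $\Corep_f\C[G]$ and satisfy the two zig-zag identities; both are one-line computations using $U^*U=UU^*=1$, so $(\bar H_U,e,c)$ realises $\bar H_U$ as a two-sided dual of $H_U$. Since $\F$ is a tensor functor — this is where the Galois hypothesis enters, being precisely what makes the morphisms $\F_{2;M,N}$ invertible — applying $\F$ to the zig-zag identities and inserting the coherence isomorphisms $\F_{2}$ and the unit isomorphism $\F_0\colon\C\to\F(\C)={}^G\B=\C1$ shows that $\F(\bar H_U)$ is a two-sided dual of $\F(H_U)$ in $\Vectf$; in particular the pairing
$$
p:=\F_0^{-1}\circ\F(e)\circ\F_{2;\bar H_U,H_U}\colon\F(\bar H_U)\otimes\F(H_U)\to\C
$$
is nondegenerate. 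Finally, using $\F_{2;M,N}(x\otimes y)=x_{13}y_{23}$, for $x=\sum_i\xi_i\otimes x_i$ and $y=\sum_i\xi_i\otimes y_i$ in $\F(H_U)$ a direct computation gives
$$
\F(e)\bigl(\F_{2;\bar H_U,H_U}(j(y)\otimes x)\bigr)=\sum_{i,k}e(\bar\xi_i\otimes\xi_k)\,y_i^*x_k=\sum_i y_i^*x_i=\langle x,y\rangle\,1,
$$
i.e.\ $p(j(y)\otimes x)=\langle x,y\rangle$.

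The lemma then follows: if $\langle x,y\rangle=0$ for all $y\in\F(H_U)$, then $p(w\otimes x)=0$ for all $w\in\F(\bar H_U)$, since every such $w$ equals $j(y)$ for some $y$ and $p(j(y)\otimes x)=\langle x,y\rangle$; nondegeneracy of $p$ then forces $x=0$, and the other case reduces to this one because the form is Hermitian. The only step that is not bookkeeping is the third paragraph — observing that, transported along $j$, the Hermitian form \emph{is} the evaluation pairing, and invoking that a tensor functor (which $\F$ is, thanks to the Galois condition) automatically preserves duals. The remaining ingredients are routine manipulations with matrix coefficients and the comodule relations.
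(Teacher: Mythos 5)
Your argument is correct and follows essentially the same route as the paper's own proof: view the conjugate comodule $\bar H_U$ as a dual of $H_U$, use that the fiber functor $\F$ defined by the Galois extension is a tensor functor and hence carries this duality to a nondegenerate pairing $\F(\bar H_U)\otimes\F(H_U)\to\C$, and then identify that pairing with the Hermitian form via the conjugate-linear bijection $\sum_i\xi_i\otimes x_i\mapsto\sum_i\bar\xi_i\otimes x_i^*$ coming from the $*$-structure. The only slight overstatement is calling $(\bar H_U,e,c)$ a two-sided dual in $\Corep_f\C[G]$ — this single pair gives a one-sided dual, which is all the argument needs (and in $\Vectf$ any one-sided dual is two-sided anyway).
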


\bp
Consider the right comodule $M=\bar H_U$ with the coaction $\delta(\bar\xi):=\overline{\xi_{(0)}}\otimes\xi_{(1)}^*$, where $\delta_U(\xi)=\xi_{(0)}\otimes\xi_{(1)}$. This comodule is a right dual to $H_U$, with the standard evaluation map given by $\phi\colon M\otimes H_U\to\C$, $\phi(\bar\xi\otimes\zeta):=(\zeta,\xi)$. It follows that $\F(M)$ is a right dual to $\F(H_U)$ in  $\Vectf$, with the duality given by the evaluation map $\F(\phi)\F_{2;M,H_U}$. This is equivalent to saying that the pairing
$$
(M\boxvoid_G\B)\times(H_U\boxvoid_G\B)\to\C1,\quad (\sum_i\bar\xi_i\otimes y_i,\sum_i\xi_i\otimes x_i)\mapsto\sum_i y_ix_i,
$$
where $(\xi_i)_i$ is an orthonormal basis in $H_U$, is nondegenerate. Combined with the antilinear isomorphism $H_U\boxvoid_G\B\cong M\boxvoid_G\B$, $\sum_i\xi_i\otimes y_i\mapsto \sum_i\bar\xi_i\otimes y_i^*$, this gives the result.
\ep

This eventually leads to the following criteria for existence of C$^*$-completions.

\begin{theorem}[\cite{bichon-gal}]\label{thm:Bichon}
Assume $G$ is a compact quantum group and $\B$ is a left $*$-$G$-Galois extension of~$\C$. Then the following conditions are equivalent:
\begin{enumerate}
  \item[(1)] $\B$ admits a C$^*$-completion;
  \item[(2)] $\B$ has a state, that is, a linear functional $\phi$ such that $\phi(1)=1$ and $\phi(a^*a)\ge0$ for all~$a\in \B$;
  \item[(3)] the nondegenerate Hermitian form on $H_U\boxvoid_G\B$ given by~\eqref{eq:Hermitian} is positive definite for all finite dimensional unitary representations $U$ of $G$.
\end{enumerate}
\end{theorem}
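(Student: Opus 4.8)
The plan is to prove the theorem by the cycle of implications $(1)\Rightarrow(2)\Rightarrow(3)\Rightarrow(1)$. The implication $(1)\Rightarrow(2)$ needs no work: a C$^*$-completion $B$ of $\B$ is a unital C$^*$-algebra, hence carries a state, and the restriction of any such state to $\B$ is a functional of the required form. For $(2)\Rightarrow(3)$ I would use the explicit description of the Hermitian form recalled right after~\eqref{eq:Hermitian}: if $(\xi_i)_i$ is an orthonormal basis of $H_U$ and $x=\sum_i\xi_i\otimes x_i\in\F(H_U)$, then $\langle x,x\rangle1=\sum_ix_i^*x_i$, so for a state $\phi$ on $\B$ we get $\langle x,x\rangle=\sum_i\phi(x_i^*x_i)\ge0$. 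Thus the form is positive semidefinite; since it is nondegenerate by Lemma~\ref{lem:Bichon}, the Cauchy--Schwarz inequality for semidefinite Hermitian forms shows that a nonzero null vector would lie in the radical, which is impossible, so the form is positive definite.

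The substance of the theorem is $(3)\Rightarrow(1)$. Granting (3), every space $\F(H_U)$ becomes a finite dimensional Hilbert space, and since the isomorphisms $\F_{2;H_U,H_V}$ are isometric for these scalar products --- as noted in the excerpt --- they are unitaries; hence $(\F,\langle\cdot,\cdot\rangle)$ is a unitary fiber functor $\Rep G\to\Hilbf$. By the one-to-one correspondence recalled above between reduced left C$^*$-$G$-Galois extensions of $\C$ and unitary fiber functors $\Rep G\to\Hilbf$, there is a reduced left C$^*$-$G$-Galois extension $B$ of $\C$ whose associated unitary fiber functor, with the scalar products given by~\eqref{eq:Hermitian}, is exactly $(\F,\langle\cdot,\cdot\rangle)$. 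It then remains to identify the algebraic core of $B$ with $\B$: on the level of $G$-comodule algebras both correspond to the same fiber functor $\F\colon\Corep_f\C[G]\to\Vectf$ and are therefore isomorphic, and on the level of $*$-operations both are recovered from $\F$ together with the Hermitian forms via~\eqref{eq:Hermitian} (using Propositions~\ref{prop:polar1} and~\ref{prop:polar2} to pass freely between plain and unitary monoidal isomorphisms), so this isomorphism is $*$-preserving. Consequently $B$ is a C$^*$-completion of $\B$.

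The main obstacle I anticipate is precisely this last identification, that is, a Tannaka--Krein type reconstruction of a $*$-$G$-Galois extension --- including its $*$-operation, not merely its $G$-comodule algebra structure --- from the pair $(\F,\langle\cdot,\cdot\rangle)$. An alternative that sidesteps the correspondence is to argue directly: the positive definite forms make $\bigoplus_{s\in\Irr G}H_s\otimes\F(H_s)$ into a pre-Hilbert space on which $\B$ acts by left multiplication, the generators act boundedly thanks to~\eqref{eq:Hermitian} together with its analogue for conjugate representations (which controls $\sum_ix_ix_i^*$ as well as $\sum_ix_i^*x_i$), and the operator-norm closure is then a C$^*$-completion; this route trades the reconstruction for the boundedness estimate and a faithfulness check. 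Everything else in the proof is formal or a one-line computation.
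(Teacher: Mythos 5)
Your proposal is correct and follows essentially the same route as the paper: the implication $(2)\Rightarrow(3)$ rests on Lemma~\ref{lem:Bichon} exactly as in the paper, and the substance of $(3)\Rightarrow(1)$ --- reconstructing the $*$-Galois extension, involution included, from the unitary fiber functor determined by the forms~\eqref{eq:Hermitian} --- is precisely what the paper invokes from Bichon (\cite{bichon-gal}, Proposition~4.3.3 and Theorem~4.3.4), i.e.\ the very step you flag as the main obstacle is there supplied by citation rather than reproved. So your outline matches the paper's proof, with your GNS-style alternative being an optional way to make that cited reconstruction self-contained.
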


Note that $\B$ may have many C$^*$-completions, but if it has one, it has a unique completion such that we get a reduced action of $G$ on it.

\bp
Equivalence of (1) and (2) follows from \cite[Theorem~5.2.1 and Proposition~4.2.5]{bichon-gal}. That (2) implies (3) follows from Lemma~\ref{lem:Bichon} and is already proved in~\cite[Proposition~4.3.1]{bichon-gal}. That (3) implies (1) follows from~\cite[Theorem~4.3.4]{bichon-gal} and its proof; see in particular Step 3 there and ~\cite[Proposition 4.3.3]{bichon-gal}, which show how the $*$-structure on $\B$ can be reconstructed from the scalar products~\eqref{eq:Hermitian} defining a unitary structure on the fiber functor corresponding to $\B$.
\ep

\begin{remark}\label{rem:iso*}
If $*_1$ and $*_2$ are two invariant $*$-structures on a $G$-Galois extension $\B$ of $\C$ admitting a C$^*$-completion, then there is a $G$-equivariant automorphism $\beta$ of $\B$ such that $\beta(a^{*_1})=\beta(a)^{*_2}$ for all $a\in \B$. Indeed, both involutions define unitary structures on the fiber functor associated with $\B$. By Proposition~\ref{prop:polar2}, the two unitary fiber functors we thus get are naturally unitarily monoidally isomorphic. By~\cite[Theorem 4.3.4]{bichon-gal} it follows that the $*$-$G$-Galois extensions $(\B,*_1)$ and $(\B,*_2)$ are isomorphic. \ee
\end{remark}

In general a $G$-Galois extension of $\C$ can have a large group of automorphisms and many invariant $*$-structures.

\begin{example}
Let $G$ be a compact Lie group. The algebra $\A=\C[G]$ is a left $G$-Galois extension of $\C$ corresponding to the forgetful functor $\Corep_f\C[G]\to\Vectf$. The group $G$ is a real affine algebraic group and $\C[G]$ is the algebra of regular functions on its complexification $G_\C$. Hence the $G$-equivariant automorphisms of $\A$ are in a one-to-one correspondence with the right translations on $G_\C$ by elements $g\in G_\C$. Such an automorphism preserves the standard $*$-structure on $\A=\C[G]$ if and only if $g\in G$. By Remark~\ref{rem:iso*} it follows that the $G$-invariant $*$-structures on $\A$ admitting a C$^*$-completion can be parameterized by the points of the coset space $G_\C/G$.

Every such $*$-structure $\star$ defined by $[g]\in G_\C/G$ can be further twisted by a nontrivial $G$-equivariant $\star$-preserving involutive automorphism $\beta$ by letting $a^\#=\beta(a)^\star$. Such automorphisms~$\beta$ are, in turn, parameterized by the order $2$ elements of $gGg^{-1}\cong G$. \ee
\end{example}

Let us now turn to bi-Galois objects, which have much smaller automorphism groups. We need a bit of preparation to formulate our results.

\smallskip

Recall that the chain group $\Ch(\CC)$ of a small rigid C$^*$-tensor category $\CC$ is the group generated by the formal symbols $[X]$ for the simple objects $X$ of $\CC$, subject to the relations $[X][Y ] = [Z]$ whenever $Z$ appears as a subobject of $X\otimes Y$. Recall also that a Hopf algebra homomorphism $\pi$ is called cocentral if $(\pi\otimes\iota)\Delta=(\pi\otimes\iota)\Delta^{\mathrm{op}}$. If $G$ is a compact quantum group and $\Gamma=\Ch(\Rep G)$, then we get a cocentral homomorphism $\pi\colon\C[G]\to\C\Gamma$ of Hopf $*$-algebras, which is universal among such homomorphisms into group algebras, see~\cite[Section 1.1]{BNY16}. Explicitly, if $u_{ij}$ is a matrix coefficient of an irreducible unitary representation $U$, then $\pi(u_{ij})=\eps(u_{ij})[U]$.

It is clear from the definition that if $\F\colon\CC_1\to\CC_2$ is a monoidal equivalence of tensor categories, then $\F$ induces an isomorphism $\Ch(\CC_1)\cong\Ch(\CC_2)$.

Now, if $(\B,\delta)$ is a left $G$-Galois extension of $\C$, then every quasi-character $\chi\colon\Gamma\to\C^\times$ defines a $G$-equivariant automorphism $\alpha_\chi$ of $\B$ by
$$
\alpha_\chi(a):=(\chi\pi\otimes\iota)\delta(a), \quad a \in \B.
$$
In other words, $\alpha_\chi$ acts on the spectral subspace of $\B$ corresponding to an irreducible unitary representation $U$ by multiplication by the scalar $\chi([U])$. If $\chi$ is real-valued and we have a $G$-invariant $*$-structure on $\B$, then we can define a new one by
$$
a^\star:=\alpha_\chi(a)^*.
$$
We thus get an action of $\Hom(\Gamma,\R^\times)$ on the set of $G$-invariant $*$-structures on $\B$.

\begin{theorem}\label{thm:Galois-objects}
Assume $G_1$ and $G_2$ are compact quantum groups, $\Gamma=\Ch(\Rep G_1)$ and~$\B$~is a $G_1$-$G_2$-Galois object. Then either $\B$ does not admit any $G_1$-$G_2$-invariant $*$-structures or the set of such structures forms a torsor over the group $\Hom(\Gamma,\R^\times)$. In the latter case the subset of $*$-structures admitting a C$^*$-completion is nonempty and forms a torsor over the group $\Hom(\Gamma,\R^\times_+)$.
\end{theorem}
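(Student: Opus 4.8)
The plan is to first show that, whenever the set $S$ of $G_1$-$G_2$-invariant $*$-structures on $\B$ is nonempty, the group $\Hom(\Gamma,\R^\times)$ acts on $S$ freely and transitively, and then, still assuming $S\neq\varnothing$, to use the Hermitian forms of Lemma~\ref{lem:Bichon} together with Schur's lemma to single out the completable ones. For the action: given $\chi\in\Hom(\Gamma,\R^\times)$ and $*\in S$, set $a^{\chi\cdot *}:=\alpha_\chi(a)^*$. This is antimultiplicative because $\alpha_\chi$ is an algebra map, and it is an involution because $\chi$ is real-valued and $*$ interchanges the $G_1$-spectral subspaces $\B_U$ and $\B_{\bar U}$, on which $\alpha_\chi$ acts by $\chi([U])$ and $\chi([\bar U])=\chi([U])^{-1}$, so that $\alpha_\chi\circ *\circ\alpha_\chi=*$. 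It is $G_1$-invariant since $\alpha_\chi$ is $G_1$-equivariant, and it is $G_2$-invariant since $\alpha_\chi=(\chi\pi\otimes\iota)\delta_1$ is also $G_2$-equivariant — which follows at once from the commutation of $\delta_1$ and $\delta_2$, as $\delta_2\alpha_\chi=(\chi\pi\otimes\iota\otimes\iota)(\iota\otimes\delta_2)\delta_1=(\chi\pi\otimes\iota\otimes\iota)(\delta_1\otimes\iota)\delta_2=(\alpha_\chi\otimes\iota)\delta_2$. The action is free: $\chi\cdot *=*$ forces $\alpha_\chi=\id$, hence $\chi([U])=1$ for every irreducible $U$, and since all such $U$ occur in the Galois extension $\B$ and their classes generate $\Gamma$, we get $\chi=1$. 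It is transitive: for $*_1,*_2\in S$ the linear map $a\mapsto(a^{*_2})^{*_1}$ is a $G_1$-$G_2$-equivariant algebra automorphism of $\B$, hence of the form $\alpha_\chi$ for a quasi-character $\chi\colon\Gamma\to\C^\times$ (the automorphism group of a bi-Galois object is $\Hom(\Gamma,\C^\times)$); using that each $*_i$ sends $\B_U$ into $\B_{\bar U}$, a short computation on the spectral subspaces forces $\chi$ to be real-valued and $*_2=\chi\cdot *_1$. Thus either $S=\varnothing$ or $S$ is a torsor over $\Hom(\Gamma,\R^\times)$.

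Now assume $S\neq\varnothing$, fix $*_0\in S$, and let $\F=\F_\B\colon\Corep_f\C[G_1]\to\Corep_f\C[G_2]$ be the tensor equivalence associated to $\B$. For each irreducible $U$ let $\langle\cdot,\cdot\rangle_U$ be the Hermitian form on $\F(H_U)=H_U\boxvoid_{G_1}\B$ defined from $*_0$ by~\eqref{eq:Hermitian}; it is nondegenerate by Lemma~\ref{lem:Bichon}. Because $\delta_2$ is $*_0$-preserving, the identity $\sum_i\delta_2(y_i^{*_0})\delta_2(x_i)=\delta_2\bigl(\sum_i y_i^{*_0}x_i\bigr)$ shows that $\langle\cdot,\cdot\rangle_U$ is invariant for the $G_2$-comodule structure that $\F(H_U)$ carries. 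The crucial observation is now that $\F(H_U)$, being the image of a simple object under an equivalence, is an \emph{irreducible} $G_2$-representation; hence by Schur's lemma every $G_2$-invariant Hermitian form on it is a real scalar multiple of its essentially unique invariant inner product, so $\langle\cdot,\cdot\rangle_U$ is definite — let $\sigma(U)\in\{\pm1\}$ be its sign. To see $\sigma$ is multiplicative, note that a direct computation with~\eqref{eq:Hermitian} gives $\F(T)^{\dagger}=\F(T^{*})$ for every intertwiner $T$, the adjoint being taken for the forms $\langle\cdot,\cdot\rangle$; combined with the isometry of the $\F_2$'s this yields an isometric identification $\langle\cdot,\cdot\rangle_U\otimes\langle\cdot,\cdot\rangle_V\cong\bigoplus_W N^W_{UV}\langle\cdot,\cdot\rangle_W$. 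Since a tensor product of definite forms is definite, while a direct sum of definite forms is definite only when the summands all share a sign, we get $\sigma(U)\sigma(V)=\sigma(W)$ whenever $W\subseteq U\otimes V$, so $\sigma$ extends to a homomorphism $\Gamma\to\{\pm1\}\subset\R^\times$.

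Finally, for any $\chi\in\Hom(\Gamma,\R^\times)$ the Hermitian form attached to $\chi\cdot *_0$ on $\F(H_U)$ equals $\chi([U])\langle\cdot,\cdot\rangle_U$, since every element of $H_U\boxvoid_{G_1}\B$ has all its $\B$-components in the $U$-spectral subspace, on which $\alpha_\chi$ is multiplication by $\chi([U])$. Taking $\chi=\sigma$ makes all these forms positive definite (as $\sigma(U)\langle\cdot,\cdot\rangle_U$ has sign $\sigma(U)^2=1$), so by Theorem~\ref{thm:Bichon} the structure $\sigma\cdot *_0$ admits a C$^*$-completion and $S_0\neq\varnothing$. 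Moreover, for $*_1\in S_0$ and $\chi\in\Hom(\Gamma,\R^\times)$, the structure $\chi\cdot *_1$ again lies in $S_0$ iff every $\chi([U])\langle\cdot,\cdot\rangle_U^{*_1}$ is positive definite, i.e.\ iff $\chi([U])>0$ for all $U$, i.e.\ iff $\chi\in\Hom(\Gamma,\R^\times_+)$; hence $S_0$ is a torsor over $\Hom(\Gamma,\R^\times_+)$. I expect the step needing the most care to be transitivity, specifically the identification of the automorphism group of $\B$ with $\Hom(\Gamma,\C^\times)$: one checks that a $G_1$-equivariant automorphism acts on $\B_U\cong H_U\otimes M_U$ as $\id_{H_U}\otimes S_U$, that $G_2$-equivariance makes $S_U$ an endomorphism of the irreducible $G_2$-comodule $M_U$ and hence a scalar, and that multiplicativity of the automorphism packages these scalars into a quasi-character of $\Gamma$ — so this step, too, ultimately rests on Schur's lemma.
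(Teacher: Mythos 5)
Your proof is correct, but the first half takes a genuinely different route from the paper. To get that the invariant $*$-structures form a torsor over $\Hom(\Gamma,\R^\times)$, you compose two involutions, identify the resulting $G_1$-$G_2$-equivariant automorphism with some $\alpha_\chi$ via the classification of equivariant automorphisms of a bi-Galois object as $\Hom(\Gamma,\C^\times)$ (essentially \cite[Proposition~1.2]{BNY16}, whose Schur-type proof you sketch), and then check on spectral subspaces that $\chi$ is real-valued. The paper deliberately avoids this classification: it derives transitivity instead from the Hermitian forms of Lemma~\ref{lem:Bichon}, first shifting any invariant involution by a $\{\pm1\}$-valued character to a completable one, then showing that two completable involutions give proportional forms with positive ratio, and finally invoking Bichon's reconstruction of an invariant involution from its Hermitian forms (\cite[Proposition~4.3.3 and Theorem~4.3.4]{bichon-gal}) to conclude they agree after twisting; full $\Hom(\Gamma,\R^\times)$-transitivity then follows by combining. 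The completability half of your argument is essentially the paper's: Schur's lemma on the simple $G_2$-comodule $\F(H_U)$ gives definiteness, the signs assemble into a character of the chain group, and Theorem~\ref{thm:Bichon} is used in both directions; you actually supply a detail the paper leaves implicit, namely why the sign is multiplicative, via $\F(T)^{\dagger}=\F(T^*)$ and the isometry of the maps $\F_2$. The trade-off: your route bypasses Bichon's "involution is determined by its forms" statement but leans on the automorphism classification, whose proof (in particular the fact that products of spectral subspaces have nonzero components in every constituent, needed for multiplicativity of the scalars) you only gesture at; the paper's route needs no information about the automorphism group but imports more from \cite{bichon-gal}.
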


Recall that by a torsor over $\Gamma$, or a $\Gamma$-torsor, one means a set equipped with a free transitive action of $\Gamma$.

In the above formulation we can of course equally well use the chain group of $\Rep G_2$, which is isomorphic to $\Gamma$ thanks to the monoidal equivalence between $\Rep G_1$ and $\Rep G_2$ defined by $\B$. Let us also note that the entire group of $G_1$-$G_2$-equivariant automorphisms of $\B$ is isomorphic to $\Hom(\Gamma,\C^\times)$, which is an equivalent form of~\cite[Proposition~1.2]{BNY16}, but we will not need this explicitly.

\bp[Proof of Theorem~\ref{thm:Galois-objects}]
Assume there is a $G_1$-$G_2$-invariant involution $*$ on $\B$. Consider the monoidal functor $\F\colon\Rep G_1\to\Corep_f\C[G_2]$ defined by $\B$. Observe that if $U$ is an irreducible unitary representation of $G_1$, then, on the one hand, the Hermitian form $\langle\cdot,\cdot\rangle$ on $\F(H_U)$ defined by~\eqref{eq:Hermitian} is $G_2$-invariant. On the other hand, the $(\C[G_2],\Delta)$-comodule $\F(H_U)$ is simple, hence it admits an invariant scalar product and every other invariant Hermitian form on $\F(H_U)$ is a scalar multiple of it. Therefore $\langle\cdot,\cdot\rangle$ is either positive definite or negative definite on $\F(H_U)$. Let $\chi([U])=1$ in the first case and $\chi([U])=-1$ in the second. Then $\chi$ defines a homomorphism $\Gamma\to\{\pm1\}$.
Consider the involution $\star$ defined by $a^\star:=\alpha_\chi(a)^*$. By construction the Hermitian forms on $\F(H_U)$ defined by $\star$ are positive definite. By Theorem~\ref{thm:Bichon}(3), $(\B,\star)$ admits a C$^*$-completion.

Next, if $*_1$ and $*_2$ are two $G_1$-$G_2$-invariant involutions on $\B$ admitting C$^*$-completions, then for irreducible $U$ the scalar product on $\F(H_U)$ defined by~$*_2$ must be a positive scalar multiple of that defined by~$*_1$. These scalars define an element $\chi\in\Hom(\Gamma,\R^\times_+)$ such that the involution~$\star$
given by $a^\star:=\alpha_\chi(a)^{*_1}$ defines the same scalar products on $\F(H_U)$ as $*_2$. Since an invariant involution on a $G_1$-Galois extension of $\C$ can be recovered from the associated Hermitian forms, see again the proofs of~\cite[Proposition~4.3.3 and Theorem~4.3.4]{bichon-gal}, we conclude that $*_2=\star$.

Finally, if $*$ is a $G_1$-$G_2$-invariant involution on $\B$ admitting a C$^*$-completion, $\chi\in\Hom(\Gamma,\R^\times_+)$ and $\star$ is defined by $a^\star:=\alpha_\chi(a)^*$, then $\star$ also admits a C$^*$-completion. This follows either from Theorem~\ref{thm:Bichon}(3) or by observing that $\alpha_{\chi^{1/2}}(a^\star)=\alpha_{\chi^{1/2}}(a)^*$.

It follows that the group $\Hom(\Gamma,\R^\times)$ acts transitively on the set of invariant $*$-structures and $\Hom(\Gamma,\R^\times_+)$ acts transitively on the nonempty subset of $*$-structures admitting a C$^*$-completion. Since the action of $\Hom(\Gamma,\R^\times)$ is obviously free, this gives the result.
\ep

As a corollary we get the following algebraic characterization of C$^*$-bi-Galois objects.

\begin{corollary}\label{cor:star-criterion}
Assume $G_1$ and $G_2$ are compact quantum groups and $(\B,\delta_1,\delta_2)$~is a $*$-$G_1$-$G_2$-Galois object. Assume there exist finite dimensional unitary representations $U_i\in B(H_i)\otimes\C[G_1]$, finite dimensional Hilbert spaces~$K_i$  and elements $X_i\in B(K_i,H_i)\otimes\B$ such that $\B$ is generated as a $*$-algebra by the matrix coefficients of $X_i$ ($i\in I$) and
$$
X_iX_i^*=1,\quad (\iota\otimes\delta_1)(X_i)=(U_i)_{12}(X_i)_{13}\quad\text{for all}\quad i.
$$
Then $\B$ admits a C$^*$-completion.
\end{corollary}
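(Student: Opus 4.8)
The plan is to reduce everything to Theorem~\ref{thm:Galois-objects}. Since the given involution $*$ is a $G_1$-$G_2$-invariant $*$-structure on $\B$, Theorem~\ref{thm:Galois-objects} puts us in its second alternative: the set of $G_1$-$G_2$-invariant $*$-structures on $\B$ is a torsor over $\Hom(\Gamma,\R^\times)$, and the subset of those admitting a C$^*$-completion is a nonempty sub-torsor over $\Hom(\Gamma,\R^\times_+)$. I would fix one C$^*$-completable invariant structure $\star$, with C$^*$-completion $B$; by the torsor structure there is a unique $\chi\in\Hom(\Gamma,\R^\times)$ with $a^\star=\alpha_\chi(a)^*$ for all $a\in\B$. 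It then suffices to prove that $\chi$ takes only positive values, for then $*$ lies in the $\Hom(\Gamma,\R^\times_+)$-orbit of $\star$ and hence also admits a C$^*$-completion. As $\chi$ is a homomorphism, it is enough to verify positivity on a set of generators of $\Gamma$.

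Such generators will come from the $X_i$. Recall that $\B$ is graded by $\Gamma=\Ch(\Rep G_1)$ via $(\pi\otimes\iota)\delta_1$, with $\B=\bigoplus_{\gamma\in\Gamma}\B_\gamma$, this grading being compatible with multiplication and with the involution, $(\B_\gamma)^*=\B_{\gamma^{-1}}$, and with $\alpha_\chi$ acting on $\B_\gamma$ by the scalar $\chi(\gamma)$. For each $i$ I would decompose $H_i$ into $G_1$-irreducible subspaces $H_i=\bigoplus_k H_i^{(k)}$, choose an orthonormal basis $(\xi_b)_b$ of $H_i$ adapted to this decomposition, write $x_{ba}$ for the matrix coefficients of $X_i$ in this basis, and set $\gamma_{i,k}=[H_i^{(k)}]\in\Gamma$. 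Using the equivariance $(\iota\otimes\delta_1)(X_i)=(U_i)_{12}(X_i)_{13}$ together with the formula $\pi(u_{ij})=\eps(u_{ij})[U]$ for the cocentral homomorphism, one checks that $x_{ba}\in\B_{\gamma_{i,k}}$ whenever $\xi_b\in H_i^{(k)}$, so that $x_{ba}^*\in\B_{\gamma_{i,k}^{-1}}$. On the other hand $X_iX_i^*=1$ reads, in matrix coefficients, $\sum_a x_{ba}x_{b'a}^*=\delta_{bb'}1$; taking $b=b'$ with $\xi_b\in H_i^{(k)}$ and using $a^\star=\alpha_\chi(a)^*$ (so that $x_{ba}^*=\chi(\gamma_{i,k})^{-1}x_{ba}^\star$, as $\chi$ is real-valued) gives $\chi(\gamma_{i,k})^{-1}\sum_a x_{ba}x_{ba}^\star=1$. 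In the C$^*$-algebra $B$ the element $\sum_a x_{ba}x_{ba}^\star$ is positive, and by this identity it equals $\chi(\gamma_{i,k})1\neq0$; hence $\chi(\gamma_{i,k})>0$.

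To finish I would argue that the $\gamma_{i,k}$ generate $\Gamma$. Since $\delta_1$ is a $*$-homomorphism and $\B$ is generated as a $*$-algebra by the $x_{ba}$ and their adjoints, the $\Gamma$-grading of $\B$ is supported on the subgroup $\Gamma_0\subseteq\Gamma$ generated by the $\gamma_{i,k}$, i.e. $\B_\gamma=0$ for $\gamma\notin\Gamma_0$. But $\B$, being a $G_1$-Galois object, contains every irreducible representation of $G_1$ with nonzero multiplicity (a standard fact; e.g. surjectivity of the Galois map $\B\otimes\B\to\C[G_1]\otimes\B$ forces the matrix coefficients of $\B$ to span $\C[G_1]$), so $\B_\gamma\neq0$ for all $\gamma\in\Gamma$ and therefore $\Gamma_0=\Gamma$. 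Hence $\chi\in\Hom(\Gamma,\R^\times_+)$, and by Theorem~\ref{thm:Galois-objects} the involution $*$ admits a C$^*$-completion.

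The step I expect to require the most care is the bookkeeping in the second paragraph: checking, with the left/right comodule conventions in force, that the matrix coefficients of $X_i$ are genuinely $\Gamma$-homogeneous of degree exactly $\gamma_{i,k}$, and that $\alpha_\chi$ and the two involutions interact on the spectral subspaces precisely as used above (in particular that $a^\star=\alpha_\chi(a)^*$ with real $\chi$ yields $x_{ba}^*=\chi(\gamma_{i,k})^{-1}x_{ba}^\star$ rather than some conjugated version). Once this is set up, the remainder is a formal consequence of the $\Gamma$-grading, the Galois property, and Theorem~\ref{thm:Galois-objects}.
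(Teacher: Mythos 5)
Your argument is correct, and its core coincides with the paper's: both reduce the statement to Theorem~\ref{thm:Galois-objects} and both extract the crucial positivity from the relation $X_iX_i^*=1$ evaluated against a C$^*$-completable invariant involution $\star$ (your entrywise identity $\sum_a x_{ba}x_{ba}^\star=\chi(\gamma_{i,k})1$ is exactly the paper's $X_iX_i^\star=\chi([U_i])1$, after the same reduction to irreducible blocks of $U_i$). Where you diverge is in how the argument is closed. The paper takes for $\star$ the specific involution built in the proof of Theorem~\ref{thm:Galois-objects}, whose relating character is $\{\pm1\}$-valued; positivity then forces $\chi([U_i])=1$, so $*$ and $\star$ agree on the matrix coefficients of the $X_i$ and $X_i^*$, hence on all of $\B$ since these generate it as an algebra, giving $*=\star$ outright. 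You instead take an arbitrary completable $\star$, use freeness and transitivity of the $\Hom(\Gamma,\R^\times)$-action to get a unique relating character, prove it is positive on the degrees $\gamma_{i,k}$, and then need the extra step that these degrees generate $\Gamma$ — which you justify correctly: the $\Gamma$-grading of $\B$ is supported on the subgroup they generate, while the Galois (full quantum multiplicity) property forces every irreducible of $G_1$, hence every element of the chain group, to occur in $\B$. This extra lemma is sound but is precisely what the paper's choice of the sign character renders unnecessary; conversely, your route never needs to identify $*$ with a particular involution, only to place it in the $\Hom(\Gamma,\R^\times_+)$-orbit of one, so it is marginally more robust to how Theorem~\ref{thm:Galois-objects} is quoted (you use only its statement, not its proof). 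Your direction bookkeeping ($a^\star=\alpha_\chi(a)^*$ versus $a^*=\alpha_{\chi^{-1}}(a)^\star$) and the homogeneity computation for the $x_{ba}$ under the left coaction are both handled correctly.
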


Note that conversely, by~\cite[Theorem~5.1.1]{bichon-gal}, if a $*$-$G_1$-$G_2$-Galois object $\B$ admits a C$^*$-completion, then for every irreducible unitary representation $U$ of $G_1$ there is a unitary element $X^U\in B(K_U,H_U)\otimes\B$, where $K_U=H_U\boxvoid_{G_1}\B$, such that $(\iota\otimes\delta_1)(X^U)=U_{12}X^U_{13}$, and then $\B$ is spanned by the matrix coefficients of such elements $X^U$.

\bp[Proof of Corollary~\ref{cor:star-criterion}]
By decomposing $U_i$ into irreducible representations we may assume that~$U_i$~are already irreducible. Let $\Gamma$ be the chain group of $\Rep G_1$. By Theorem~\ref{thm:Galois-objects} there is a character $\chi\colon\Gamma\to\{\pm1\}$ such that the involution $\star$ defined by $a^\star:=\alpha_\chi(a)^*$ admits a C$^*$-completion. Since the matrix coefficients of $X_i$ lie in the spectral subspaces corresponding to~$U_i$, we have
$X_i X_i^\star=\chi([U_i])X_i X_i^*=\chi([U_i])1$. Hence $\chi([U_i])=1$ for all $i$.
Therefore the involutions~$*$ and~$\star$ coincide on the matrix coefficients of $X_i$, hence also on the matrix coefficients of $X_i^*$. Since the matrix coefficients of~$X_i$ and~$X_i^*$ generate the algebra $\B$, we conclude that~$*=\star$.
\ep

\bigskip

\section{Temperley--Lieb subproduct systems and associated \texorpdfstring{C$^*$}{C*}-algebras}\label{sec:TL}

We start this section by recalling the basic ingredients of subproduct systems. For more details see, e.g., \cite{Shalit-Solel}.

By a subproduct system we will mean a family of finite dimensional Hilbert spaces $\mathcal{H} = \{H_n\}_{n \in \Z_+}$, $\dim H_0=1$, together with isometries $w_{m,n}\colon H_{m+n} \to H_m \otimes H_n$ satisfying
\[ (w_{m,n}\otimes \iota) w_{m+n,k} = (\iota\otimes w_{n,k}) w_{m,n+k} \colon H_{m+n+k} \to H_m \otimes H_n \otimes H_k\]
for all $m,n,k \in \Z_+$. We remind that $\iota$ denotes the identity map. The Fock space associated to~$\mathcal{H}$ is the Hilbert space $\mathcal{F}_\mathcal{H} := \bigoplus_{n\ge0} H_n$. We can define operators on $\mathcal{F}_{\mathcal{H}}$ by
\[ S_\xi(\zeta) := w_{1,n}^*(\xi\otimes \zeta), \quad \xi \in H_1,\ \zeta \in H_n.\]
The Toeplitz algebra associated to $\mathcal{H}$ is \[\mathcal{T}_{\mathcal{H}} = C^*(1_\mathcal{F},S_1,S_2,...,S_m) \subset B(\mathcal{F}),\] where $S_i = S_{\xi_i}$ for an orthonormal basis $(\xi_i)_{i=1}^m$ in $H_1$.  It is straightforward to verify that $1_{\mathcal{F}} - \sum_i S_iS_i^*$ is the projection onto $H_0$, and it follows that the compacts $\K(\mathcal{F}_\mathcal{H})$ are contained in $\mathcal{T}_{\mathcal{H}}$. Thus we may also define the associated Cuntz-Pimsner algebra: \[\mathcal{O}_{\mathcal{H}} := \mathcal{T}_{\mathcal{H}}/\K(\mathcal{F}_\mathcal{H}).
\]

A subproduct system is called standard if $H_0=\C$, $H_{m+n}  \subset H_{m} \otimes H_{n}$ and $w_{m,n}$ are the embedding maps. In this case we have
\[ \mathcal{F}_{\mathcal{H}} \subset \bigoplus^\infty_{n=0} H_1^{\otimes n} \quad \mbox{ and } \quad S_\xi(\zeta) = f_{n+1}(\xi\otimes \zeta), \quad \xi \in H_1, \zeta \in H_n, \]
where $f_{n+1}$ is the projection $f_{n+1} \colon H_1^{\otimes (n+1)} \to H_{n+1}$.

Given a homogeneous ideal $I \subset \C\langle X_1,X_2,...,X_m \rangle$ in the algebra of noncommutative polynomials in $m$ variables, containing no nonzero constant and linear polynomials, we get a standard subproduct  system with $H_1 = \C^m$ by setting
\begin{equation*}
\label{eq:ideal->subsys}
H_n = I_n^\perp \cap (\C^m)^{\otimes n},
\end{equation*}
where $I_n \subset I$ is the homogeneous component of degree $n$ and we identify $\C\langle X_1,X_2,...,X_m \rangle$ with the tensor algebra $T(\C^m)$. All standard subproduct systems are obtained this way, see \cite[Proposition 7.2]{Shalit-Solel}. In the present paper we will mostly consider standard subproduct systems generated by special quadratic polynomials.

\begin{definition}[\cite{HN21}]
Let $H$ be a finite dimensional Hilbert space of dimension $m \geq 2$. A nonzero vector $P \in H\otimes H$ is called \emph{Temperley--Lieb} if there is $\lambda > 0$ such that the orthogonal projection $e\colon H\otimes H \to \C P$ satisfies
\begin{equation}\label{eq:TL}
(e\otimes 1)(1\otimes e)(e\otimes 1) = \dfrac{1}{\lambda}(e\otimes 1).
\end{equation}
The standard subproduct system $\mathcal{H}_P = \{H_n\}_n$ defined by the ideal $\langle P\rangle\subset T(H)$ generated by $P$ is called a Temperley--Lieb subproduct system. We write $\mathcal{F}_P = \mathcal{F}_{\mathcal{H}_P}$, $\mathcal{T}_P = \mathcal{T}_{\mathcal{H}_P}$ and $\mathcal{O}_P = \mathcal{O}_{\mathcal{H}_P}$.
\end{definition}


\begin{example}
Consider $m=2$ and $P=X_1X_2-X_2X_1$. This is a Temperley--Lieb polynomial: relation~\eqref{eq:TL} is satisfied with $\lambda=4$. The quotient $\C\langle X_1,X_2 \rangle/\langle X_1X_2-X_2X_1\rangle$ is the polynomial algebra $\C[x_1,x_2]$ in commuting variables $x_1,x_2$. Its completion, the Fock space $\F_{X_1X_2-X_2X_1}$, is known as the Drury--Arveson space $H^2_2$. By \cite[Theorem~5.7]{arveson} the corresponding Cuntz--Pimsner algebra $\OO_{X_1X_2-X_2X_1}$ is isomorphic to $C(S^3)$, where $S^3$ is viewed as the unit sphere in~$\C^2$. \ee
\end{example}

There are obvious notions of an isomorphism of subproduct systems~\cite[Definition~1.4]{Shalit-Solel} and, in particular, of an automorphism of such a system. For a standard subproduct system $\HH=(H_n)_{n\in\Z_+}$, its automorphism group can be identified with the subgroup of the unitary group $U(H_1)$ consisting of unitaries $U\colon H_1\to H_1$ such that $U^{\otimes n}H_n=H_n$ for all $n\ge2$. The automorphism group of $\HH_P$ is often rather small, so we will instead consider a quantum automorphism group. This quantum group was introduced by Mrozinski~\cite{Mrozinski} in a different context. We rephrase his definition as follows.

\begin{definition} \label{def:GA}
For $A  = (a_{ij})_{i,j} \in \mathrm{GL}_m(\C)$ and $P = \sum_{i,j} a_{ij}X_iX_j$, define $\C[\GA]$ as the universal unital $*$-algebra generated by a unitary element~$d$ and elements $v_{ij}$, $1 \leq i,j \leq m$ such that
\[V = (v_{ij})_{i,j}\ \ \text{is unitary and}\ \ VAV^t = dA.\]
This is a Hopf-$*$-algebra with comultiplication
\[ \Delta(d) = d\otimes d, \quad \Delta(v_{ij}) = \sum_{k} v_{ik}\otimes v_{kj}.\]
\end{definition}

We remark that in the notation of Mrozinski the Hopf $*$-algebra $\C[\GA]$ is $\mathcal{G}(\bar{A},A)$ or $A_{\tilde o}(\bar A)$. In the dimension $2$, for  $q > 0$ and
$$
P=q^{-1/2}X_1X_2-q^{1/2}X_2X_1,
$$
the quantum group $\GA$ coincides with the $q$-deformation $U_q(2)$ of the unitary group $U(2)$ (see, e.g., \cite[\S9.2.4]{KS}).

Note that the second defining relation of $\GA$ can be written as $V^t=A^{-1}V^*A\,d$, and by applying transpose we equivalently get
\begin{equation}\label{eq:Vc}
V=A^tV^c(A^t)^{-1}d,\quad\text{or}\quad V^c=(A^t)^{-1}VA^td^*,
\end{equation}
where $V^c=(v^*_{ij})_{i,j}$.

Viewing $V$ as an element of $\Mat_m(\C)\otimes\C[\GA]$, by definition of $\GA$ we have
\begin{equation*} \label{eq:poly-symmetry}
V_{13}V_{23}(P\otimes 1) = P\otimes d \quad \mbox{in}\quad \C^m \otimes\C^m \otimes \C[\GA].
\end{equation*}
In other words, the embedding $\C\to\C^m\otimes\C^m$, $1\mapsto P$, intertwines the unitary representations~$d$ and $V\otimes V$ of $\GA$.

\begin{lemma}[cf.~{\cite[Lemma~2.3]{Mrozinski}}]
The matrix $|A\bar A|$ is a self-intertwiner of the representation~$V$.
\end{lemma}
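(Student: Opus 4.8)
The plan is to play the defining relation of $\GA$, in the form~\eqref{eq:Vc}, against two of its ``conjugates'': its image under the entrywise $*$-operation $M\mapsto M^c$ and its image under the matrix adjoint $M\mapsto M^*$. Combining the first of these with~\eqref{eq:Vc} itself will show that conjugating $V$ by $d$ has the same effect as conjugating it by a complex matrix $E$ with $E=(A\bar A)^*$; combining the second with the first, and using that $V$ is unitary, will show that $d$ also conjugates $V$ like $(E^*)^{-1}$. Comparing the two descriptions then forces $EE^*=|A\bar A|^2$ to commute with $V$, which is the assertion.

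Concretely, from~\eqref{eq:Vc} we have $V^cd=(A^t)^{-1}VA^t$, so $V^c=(A^t)^{-1}VA^td^{-1}$. Applying $M\mapsto M^c$ to the identity $V^c=(A^t)^{-1}VA^td^*$ — using that $(BM)^c=\bar BM^c$ and $(MB)^c=M^c\bar B$ for a complex matrix $B$, that a factor $1\otimes a$ standing to the right of a matrix passes to the left and turns into $1\otimes a^*$, and that $\overline{A^t}=A^*$ — yields $V=d(A^*)^{-1}V^cA^*$, i.e.\ $dV^c=A^*V(A^*)^{-1}$. Substituting the expression for $V^c$ and commuting the complex matrices past $d$ gives
\[dVd^{-1}=(A^tA^*)\,V\,(A^tA^*)^{-1},\qquad (A^tA^*)^*=(A^*)^*(A^t)^*=A\bar A,\]
so $E:=A^tA^*=(A\bar A)^*$. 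Taking the matrix adjoint of the first identity, using that $d$ is unitary and $V^*=V^{-1}$, and inverting, one obtains $dVd^{-1}=(E^*)^{-1}VE^*$ as well. Conjugating both descriptions of $dVd^{-1}$ by $d$ — which is legitimate because the complex matrix $E$ commutes with $d$ — gives $E^{-1}VE=d^{-1}Vd=E^*V(E^*)^{-1}$, hence $(EE^*)V=V(EE^*)$. Since $EE^*=(A\bar A)^*(A\bar A)=|A\bar A|^2$, the positive matrix $|A\bar A|^2$ commutes with $V$, and therefore so does $|A\bar A|$, being a polynomial in $|A\bar A|^2$. Thus $|A\bar A|$ is a self-intertwiner of $V$.

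I expect the only delicate point to be the bookkeeping of transposes and of the order of the non-commuting factors when applying $M\mapsto M^c$: this is exactly what decides whether the argument produces $EE^*=|A\bar A|^2$ rather than $E^*E=|(A\bar A)^*|^2$, so it should be carried out explicitly. The remaining steps are routine matrix algebra over the $*$-algebra $\C[\GA]$.
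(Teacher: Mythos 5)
Your argument is correct and is essentially the paper's own proof: both apply the entrywise adjoint to relation~\eqref{eq:Vc} (equivalently use $V^{cc}=V$) to obtain that conjugation by $d$ acts on $V$ as conjugation by $A^t\bar A^t=(A\bar A)^*$, and then use unitarity of $V$ (the paper by noting $V$ commutes with $A^t\bar A^td^{-1}$ and hence with its product with the adjoint, you by taking adjoints of the conjugation identity) to conclude that $V$ commutes with $|A\bar A|^2$ and hence with $|A\bar A|$. The bookkeeping of transposes and of the order of factors in your computation is accurate, so nothing further is needed.
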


\bp
By~\eqref{eq:Vc}, we have
$$
V=V^{cc}=((A^t)^{-1}VA^td^*)^c=d(\bar{A}^t)^{-1}\big((A^t)^{-1}VA^td^*\big)\bar{A}^t=d(A^t\bar A^t)^{-1}VA^t\bar A^td^{-1}.
$$
As $V$ is unitary, it follows that it commutes with
$$
A^t\bar A^td^{-1}(A^t\bar A^td^{-1})^*=|(A^t\bar A^t)^*|^2=|A\bar A|^2,
$$
hence with $|A\bar A|$.
\ep

Therefore we can hope that the representation $V$ of $\GA$ is irreducible only when $|A\bar A|$ is scalar, or equivalently, $A\bar A$ is unitary up to a scalar factor. By \cite[Lemma~1.4]{HN21}, if $m\ge2$, the last condition is satisfied exactly for the Temperley--Lieb polynomials. Furthermore, by results of~\cite{Mrozinski}, in this case $V$ is indeed irreducible and the representation ring of $\GA$ is isomorphic to that of~$U(2)$, with $V$ corresponding to the fundamental representation of $U(2)$ and $d$ to the determinant. More precisely, $\GA$ is monoidally equivalent to $U_q(2)$ for a uniquely defined $0<q\le1$ by the following theorem, which follows from the results in \cite[Section~3]{Mrozinski}.

\begin{theorem}[{\cite{Mrozinski}}] \label{thm:mrozinski}
Let $A \in \GL_m(\C)$ and $C \in \GL_k(\C)$ be matrices such that~$A\bar{A}$ and~$C\bar{C}$ are unitary and $\Tr(A^*A) = \Tr(C^*C)$.
Consider the noncommutative polynomials $P=\sum_{i,j}a_{ij}X_iX_j$ and $Q=\sum_{s,t}c_{st}X_sX_t$.
Then the universal unital algebra $\mathcal{B}(\GC,\GA)$ generated by an invertible element $z$ and elements $y_{ij}$, $1 \leq i \leq k$, $1 \leq j \leq m$, such that
$Y=(y_{ij})_{i,j}$ satisfies
\[ Y A Y^t = zC \quad\text{and}\quad Y^t \bar C Y = z\bar A, \]
is an $\GC$-$\GA$-Galois object, with the commuting comodule structures
\[ \C[\GC]\otimes \mathcal{B}(\GC,\GA) \xleftarrow{\delta_Q} \mathcal{B}(\GC,\GA) \xrightarrow{\delta_P} \B(\GC,\GA)\otimes \C[\GA]\]
determined by
\[(\iota\otimes \delta_P)(Y) = Y_{12}V^P_{13}, \quad \delta_P(z) = z \otimes d^P,\]
and
\[ (\iota\otimes\delta_Q)(Y) = V^Q_{12}Y_{13}, \quad \delta_Q(z) = d^Q\otimes z,\]
where $V^P$, $d^P$ (resp., $V^Q$, $d^Q$) denote the defining representations of $\GA$ (resp., $\GC$).
\end{theorem}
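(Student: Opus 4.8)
The plan is to recognise $\mathcal{B}(\GC,\GA)$ as a reformulation of the bi-Galois object constructed by Mrozinski, so that the statement follows from \cite[Section~3]{Mrozinski}; the only things that need to be checked by hand are the elementary ones, namely that $\delta_P$ and $\delta_Q$ are well-defined commuting comodule algebra structures. The formulas for $\delta_P$ and $\delta_Q$ are in fact forced: the generators $y_{ij}$ and $z$ have to transform, under the right action, like matrix coefficients of $V^P$ and $d^P$ and, under the left action, like matrix coefficients of $V^Q$ and $d^Q$. So the only content of well-definedness is that the substitutions $Y\mapsto YV^P$, $z\mapsto zd^P$ (and, respectively, $Y\mapsto V^QY$, $z\mapsto d^Qz$) preserve the two defining relations. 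For $\delta_P$ and the relation $YAY^t=zC$ this is immediate from $V^PA(V^P)^t=d^PA$, since then
\[
\delta_P(YAY^t)=Y\bigl(V^PA(V^P)^t\bigr)Y^t=Y(d^PA)Y^t=d^P\,(YAY^t)=d^P\,zC=\delta_P(z)\,C;
\]
the relation $Y^t\bar CY=z\bar A$ is dealt with in the same way using the equivalent form~\eqref{eq:Vc} of the defining relation and the unitarity of $A\bar A$, and the case of $\delta_Q$ is symmetric, using $V^QC(V^Q)^t=d^QC$. Coassociativity and the counit axiom are automatic because $d^P$, $d^Q$ are group-like and $V^P$, $V^Q$ are corepresentations; and since $\delta_P$ acts only through the $\GA$-variables and $\delta_Q$ only through the $\GC$-variables, the two actions visibly commute.

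It remains to prove that $(\mathcal{B}(\GC,\GA),\delta_Q,\delta_P)$ is a $\GC$-$\GA$-Galois object: that the actions are ergodic and the Galois maps of Section~\ref{sec:monoidal} are bijective; in particular one must know that $\mathcal{B}(\GC,\GA)\neq0$, which is not apparent from the presentation. The approach is to exhibit directly the $\C$-linear monoidal equivalence $\F\colon\Corep_f\C[\GA]\to\Corep_f\C[\GC]$ that underlies such a Galois object. Both categories have the fusion rules of the representation category of $U(2)$, with $V^{\bullet}$ and $d^{\bullet}$ playing the roles of the fundamental representation and the determinant; moreover, by the Temperley--Lieb property (which is part of what \cite[Section~3]{Mrozinski} establishes; cf.\ also \cite[Lemma~1.4]{HN21}) all morphism spaces in each category are generated, under the tensor operations, by the single intertwiner $P\colon\C\to\C^m\otimes\C^m$, viewed as an intertwiner $d^P\to V^P\otimes V^P$, together with its conjugate. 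One defines $\F$ on objects by $(\C^m,V^P)\mapsto(\C^k,V^Q)$ and $d^P\mapsto d^Q$, and on morphisms by declaring $\F(P)=Q$ and $\F(\bar P)=\bar Q$; the point is that this assignment is consistent, i.e.\ respects the Temperley--Lieb relations relating $P$ and $\bar P$, precisely because $A\bar A$ and $C\bar C$ are unitary (which controls the behaviour of the conjugates) and because $\Tr(A^*A)=\Tr(C^*C)$, so that the Temperley--Lieb parameter $\lambda$ — equivalently, the categorical dimension of $V^P$, which is determined by $\Tr(A^*A)$ — is the same for $P$ and $Q$. Finally, by the correspondence between Galois objects and fiber functors recalled in Section~\ref{sec:monoidal} and the structure of bi-Galois objects (cf.~\cite{bichon-gal}), the Galois object attached to $\F$ is generated by the matrix coefficients of a universal isomorphism $Y$ associated with the fundamental representation and by the scalar $z$ associated with the determinant, and the relations among these generators are exactly the translations of $\F(P)=Q$ and $\F(\bar P)=\bar Q$, namely $YAY^t=zC$ and $Y^t\bar CY=z\bar A$. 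Hence this Galois object coincides with $\mathcal{B}(\GC,\GA)$, which is therefore a $\GC$-$\GA$-Galois object.

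The main obstacle is exactly the step just sketched: proving that $P$ and $\bar P$ generate all intertwiners (the Temperley--Lieb property) and that the two displayed relations, together with invertibility of $z$, form a \emph{complete} set of relations, so that $\mathcal{B}(\GC,\GA)$ is neither zero nor strictly larger than the Galois object. This is the technical core of \cite{Mrozinski}, and the hypothesis $\Tr(A^*A)=\Tr(C^*C)$ is precisely what makes it go through — and it is necessary, being the common value of the quantum dimension of the fundamental representation of $\GA$ and of $\GC$, which must be preserved by any monoidal equivalence. In the write-up it is therefore simplest to invoke \cite[Section~3]{Mrozinski} for this part, after recording the dictionary between the two presentations: Mrozinski's Hopf $*$-algebra $\mathcal{G}(\bar A,A)$ is our $\C[\GA]$, and the bi-Galois object he associates with the pair is our $\mathcal{B}(\GC,\GA)$.
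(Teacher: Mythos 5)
Your proposal is correct and takes essentially the same route as the paper: the paper gives no proof of this theorem beyond citing \cite[Section~3]{Mrozinski} (together with the notational dictionary $\mathcal{B}(\GC,\GA)=\mathcal{G}(\bar C,C\,|\,\bar A,A)$), which is exactly where you place the technical core (nonvanishing, completeness of the relations, bijectivity of the Galois maps). Your additional elementary verification that $\delta_P,\delta_Q$ are well defined is sound — for the second relation one uses, as in~\eqref{eq:Vc}, that unitarity of $V$ plus $VAV^t=dA$ gives $V^t\bar AV=d\bar A$ (unitarity of $A\bar A$ is not actually needed there) — and your categorical sketch is a fair summary of, not a substitute for, Mrozinski's argument.
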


We remark that in the notation of \cite{Mrozinski} the algebra $\mathcal{B}(\GC,\GA)$ is $\mathcal{G}(\bar C,C \, | \, \bar A, A)$.

\smallskip

We introduce a $*$-structure on $\B(\GC,\GA)$ by letting
$$
z^* = z^{-1}\quad \text{and}\quad Y^c = (C^t)^{-1}YA^tz^{-1}.
$$
It is not difficult to check that then $\B(\GC,\GA)$ can be equivalently described as a universal unital $*$-algebra generated by a unitary $z$ and elements $y_{ij}$, $1 \leq i \leq k$, $1 \leq j \leq m$, such that
\[Y = (y_{ij})_{i,j} \ \ \text{is unitary and}\ \ Y = C^tY^c(A^t)^{-1}z .\]
It is also easily verified that $\delta_P$ and $\delta_Q$ are $*$-preserving, so that $\mathcal{B}(\GC,\GA)$ is a $*$-bi-Galois object. From Corollary~\ref{cor:star-criterion} we then get the following result.

\begin{proposition}\label{prop:C-star-structure}
In the setting of Theorem~\ref{thm:mrozinski}, the $*$-algebra $\B(\GC,\GA)$ admits a C$^*$-comple\-tion.
\end{proposition}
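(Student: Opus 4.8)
The plan is a direct application of Corollary~\ref{cor:star-criterion}, with $G_1=\GC$ acting on $\B(\GC,\GA)$ on the left via $\delta_Q$ and $G_2=\GA$ acting on the right via $\delta_P$. The standing hypotheses of that corollary are already in hand: by Theorem~\ref{thm:mrozinski} the algebra $\B(\GC,\GA)$ is a $\GC$-$\GA$-Galois object, and the $*$-structure given by $z^*=z^{-1}$ and $Y^c=(C^t)^{-1}YA^tz^{-1}$ makes $\delta_P$ and $\delta_Q$ into $*$-homomorphisms, so $\B(\GC,\GA)$ is a $*$-bi-Galois object. It therefore remains only to exhibit, for the left action $\delta_Q$, a family of equivariant coisometric generators.

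I would use the two-element index set $\{1,2\}$. For $i=1$, take $H_1=\C^k$ carrying the defining representation $U_1=V^Q$ of $\GC$, $K_1=\C^m$, and $X_1=Y\in B(K_1,H_1)\otimes\B(\GC,\GA)$, viewing the $k\times m$ matrix $Y$ as a map $\C^m\to\C^k$ over the algebra. Then $X_1X_1^*=YY^*=1$ since $Y$ is unitary in the $*$-algebraic presentation of $\B(\GC,\GA)$, and $(\iota\otimes\delta_Q)(X_1)=(\iota\otimes\delta_Q)(Y)=V^Q_{12}Y_{13}=(U_1)_{12}(X_1)_{13}$ by the coaction formula in Theorem~\ref{thm:mrozinski}. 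For $i=2$, take $H_2=K_2=\C$ with $U_2=d^Q$ the one-dimensional defining representation of $\GC$, and $X_2=z\in B(K_2,H_2)\otimes\B(\GC,\GA)=\B(\GC,\GA)$; then $X_2X_2^*=zz^*=1$ and $(\iota\otimes\delta_Q)(X_2)=\delta_Q(z)=d^Q\otimes z=(U_2)_{12}(X_2)_{13}$.

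The matrix coefficients of $X_1$ and $X_2$ are the entries $y_{ij}$ of $Y$ together with $z$, and these generate $\B(\GC,\GA)$ as a $*$-algebra — in fact already as a unital algebra, by its universal presentation, since $z^{-1}=z^*$ then also lies in the generated $*$-algebra. Hence all the hypotheses of Corollary~\ref{cor:star-criterion} are satisfied, and we conclude that $\B(\GC,\GA)$ admits a C$^*$-completion. (Alternatively one may work with the single generator $X_1=Y$: as $C$ is invertible it has a nonzero entry $c_{st}$, and $YAY^t=zC$ exhibits $z=c_{st}^{-1}\sum_{i,j}y_{si}a_{ij}y_{tj}$ as a polynomial in the $y_{ij}$, so the $*$-algebra generated by the $y_{ij}$ already contains $z$ and $z^*$.) There is no genuine obstacle in this proof: the real content has been absorbed into the proof of Corollary~\ref{cor:star-criterion}, and what is left is simply to feed the correct representations and generators into it.
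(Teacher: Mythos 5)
Your proof is correct and is essentially the paper's own argument: the paper derives this proposition directly from Corollary~\ref{cor:star-criterion}, with exactly the data you supply (the coisometries $Y$ and $z$, carrying the left $\GC$-coactions $(\iota\otimes\delta_Q)(Y)=V^Q_{12}Y_{13}$ and $\delta_Q(z)=d^Q\otimes z$, whose matrix coefficients generate $\B(\GC,\GA)$ as a $*$-algebra). You have merely made explicit the verification the paper leaves to the reader, including the dispensable observation that $z$ is already a polynomial in the $y_{ij}$.
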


We denote by $B(\GC,\GA)$ the C$^*$-envelope of $\B(\GC,\GA)$. Then $B(\GC,\GA)$  is a C$^*$-$\GC$-$\GA$-Galois object. In general, the actions of $\GA$ and $\GC$ on $B(\GC,\GA)$ are not reduced, but they will be reduced in our main cases of interest.

\smallskip

Now, let us fix a matrix $A \in\GL_m(\C)$ ($m\ge2$) such that $A\bar{A}$ is unitary and consider the corresponding Temperley--Lieb polynomial $P=\sum_{i,j}a_{ij}X_iX_j$. By~\cite[Proposition~1.5]{HN21}, there is a unitary $w\in U(m)$ such that
$$
wAw^t=\begin{pmatrix}
0 & & a_1\\
 & \reflectbox{$\ddots$} & \\
 a_m & & 0
\end{pmatrix}.
$$
Therefore by applying a unitary change of variables we may, and will, assume that $P$ has the form $\sum^m_{i=1}a_iX_iX_{m-i+1}$ (with $|a_ia_{m-i+1}|=1$ for all $i$).

Let $0<q\le 1$ be such that
\begin{equation}\label{eq:q}
q+q^{-1}=\sum^m_{i=1}|a_i|^2.
\end{equation}
Applying Theorem~\ref{thm:mrozinski} and Proposition~\ref{prop:C-star-structure} to $Q=q^{-1/2}X_1X_2-q^{1/2}X_2X_1$ and $P$, we get a C$^*$-$U_q(2)$-$\GA$-Galois object $B(U_q(2),\GA)$.

\begin{lemma}\label{lem:reduced}
The action of $\GA$ on $B(U_q(2),\GA)$ is reduced.
\end{lemma}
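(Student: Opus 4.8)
The plan is to reduce the statement to a property of the quantum group $U_q(2)$ itself, namely that it is coamenable. Recall from the excerpt that if $G$ is coamenable then every action of $G$ is automatically reduced. Now $B(U_q(2),\GA)$ carries commuting actions of $U_q(2)$ (on the left) and $\GA$ (on the right), and these actions are intertwined: the point is that a Galois object ``transports'' the reducedness of an action on one side to the other side. Concretely, I would use that for a bi-Galois object the left and right fixed-point conditional expectations are related through the linking-algebra structure — in fact, because the actions commute, applying $(\iota\otimes h_{\GA})\delta_P$ and $(h_{U_q(2)}\otimes\iota)\delta_Q$ to $B(U_q(2),\GA)$ both land in $\C 1$ (ergodicity on both sides), and one checks they coincide, so faithfulness of one expectation forces faithfulness of the other. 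Since $U_q(2)$ is coamenable, the left action of $U_q(2)$ is reduced, i.e.\ its fixed-point expectation (= the Haar state on the linking algebra, by ergodicity) is faithful; hence so is the right one, which is exactly the statement that the action of $\GA$ on $B(U_q(2),\GA)$ is reduced.

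First, then, I would record that $U_q(2)$ is coamenable. This is standard: $U_q(2)$ is a compact matrix quantum group whose representation category is that of $U(2)$, and coamenability follows, e.g., because $U_q(2)$ is built from $SU_q(2)$ (which is coamenable by Banica's work, or because its fusion rules are those of $SU(2)$ and the classical dimension function solves the amenability criterion) together with a central torus; an extension of a coamenable quantum group by a classical abelian group is coamenable. Alternatively one can cite the general fact that a $q$-deformation of a compact Lie group is coamenable.

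Second, I would spell out the transport of reducedness. Let $B = B(U_q(2),\GA)$, with $\B$ its algebraic core. Ergodicity of the left $U_q(2)$-action gives $(h_{U_q(2)}\otimes\iota)\delta_Q = \omega_1(\cdot)1$ for a state $\omega_1$ on $B$, and ergodicity of the right $\GA$-action gives $(\iota\otimes h_{\GA})\delta_P = \omega_2(\cdot)1$ for a state $\omega_2$. Because $\delta_P$ and $\delta_Q$ commute and each pushes the Haar state to itself, a short computation shows $\omega_1 = \omega_2 =: \omega$, the canonical ergodic state on the linking algebra. The left action being reduced (by coamenability of $U_q(2)$) says precisely that $\omega = \omega_1$ is faithful on $B$; but faithfulness of $\omega = \omega_2 = (\iota\otimes h_{\GA})\delta_P$ is exactly the definition of the right $\GA$-action being reduced. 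This finishes the proof.

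The main obstacle is the coamenability of $U_q(2)$: everything else is formal bookkeeping with the bi-Galois structure. If a clean reference for coamenability of $U_q(2)$ is not at hand, the fallback is to verify it directly via the dimension criterion — the fusion ring of $\Rep U_q(2)$ is $\Z[\Z]\otimes(\text{fusion ring of } SU(2))$, the classical dimensions of the generators of $\Rep\GA \simeq_\otimes \Rep U_q(2)$ coincide with the quantum dimensions computed from the intertwiner $\C\to\C^m\otimes\C^m$ given by $P$ together with $\Tr(A^*A)=q+q^{-1}$, and one invokes the standard amenability-of-fusion-ring argument. I would also double-check that the state $\omega$ produced from the two sides really agrees; this uses only that $h_{U_q(2)}$ and $h_{\GA}$ are the unique invariant states and that $\delta_P$, $\delta_Q$ commute, so it is routine.
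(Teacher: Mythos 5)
Your proposal is correct and follows essentially the same route as the paper: coamenability of $U_q(2)$ (which the paper also deduces from the fusion rules and classical dimension function being those of $U(2)$) makes the left action reduced, and then the unique $U_q(2)$-invariant state on the linking algebra coincides with the unique $\GA$-invariant state by ergodicity and commutation of the two actions, so its faithfulness transfers to the right action. The only difference is presentational: the paper states the coincidence of the two invariant states in one line, whereas you spell out the commuting-expectations computation, which is exactly the justification intended.
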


\bp
The quantum group $U_q(2)$ is coamenable, because it has the same fusion rules and classical dimension function as the compact group $U(2)$ (see \cite[Theorems~2.7.10 and~2.7.12]{NT13}). It follows that the action of $U_q(2)$ on $B(U_q(2),\GA)$ is reduced. In other words, the unique $U_q(2)$-invariant state $\phi$ on $B(U_q(2),\GA)$ is faithful. Since $\phi$ is also the unique $\GA$-invariant state, this means that the action of $\GA$ is reduced.
\ep

Let us now look at the structure of $B(U_q(2),\GA)$ more carefully. By definition, it is a universal C$^*$-algebra generated by elements $y_{ij}$, $1\le i\le 2$, $1\le j\le m$, and a unitary $z$ such that
$$
Y = (y_{ij})_{i,j} \ \ \text{is unitary},\ \ y_{2j}^*=-q^{-1/2}a_jy_{1,m-j+1}z^{-1},\ \ y_{1j}^*=q^{1/2}a_jy_{2,m-j+1}z^{-1}.
$$
It follows that $y_{2j}=-q^{-1/2}\bar a_jzy_{1,m-j+1}^*=-\bar a_j a_{m-j+1}zy_{2j}z^{-1}$, or equivalently,
$$
zy_{2j}z^{-1}=-a_j \bar a_{m-j+1}y_{2j}.
$$
As unitarity of $Y$ is equivalent to that of $\begin{pmatrix}z^* & 0\\ 0 & 1\end{pmatrix}Y$, by letting $y_j=y_{2j}$ we arrive at the following description of $B(U_q(2),\GA)$.

\begin{lemma}\label{lem:generators-relations}
The C$^*$-algebra $B(U_q(2),\GA)$ is a universal unital C$^*$-algebra generated by elements $y_i$, $1\le i\le m$, and a unitary $z$ such that $zy_{i}z^{-1}=-a_i \bar a_{m-i+1}y_{i}$ and
\begin{equation}\label{eq:unitary}
\begin{pmatrix}
q^{1/2}\bar{a}_1y_{m}^* & q^{1/2}\bar{a}_2y_{m-1}^* & \cdots & q^{1/2}\bar{a}_m y_{1}^*\\
y_{1} & y_{2} & \cdots & y_{m}
\end{pmatrix}\quad\text{is unitary}.
\end{equation}
The right action of $\GA$ is given by
$$
\delta(y_i)=\sum^m_{k=1}y_k\otimes v_{ki},\quad\delta(z)=z\otimes d.
$$
\end{lemma}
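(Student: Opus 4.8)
The plan is to unwind the definition of $B(U_q(2),\GA)$ coming from Theorem~\ref{thm:mrozinski} and Proposition~\ref{prop:C-star-structure} for the particular pair $P$, $Q=q^{-1/2}X_1X_2-q^{1/2}X_2X_1$, and then to simplify the resulting presentation by eliminating redundant generators. The matrix associated with $Q$ is $C=\begin{pmatrix}0&q^{-1/2}\\-q^{1/2}&0\end{pmatrix}$, while $A$ is anti-diagonal with entries $a_1,\dots,a_m$, so $(C^t)^{-1}=\begin{pmatrix}0&q^{1/2}\\-q^{-1/2}&0\end{pmatrix}$ and $A^t$ is anti-diagonal with $(A^t)_{i,m-i+1}=a_{m-i+1}$; the hypothesis $\Tr(A^*A)=\Tr(C^*C)$ holds by~\eqref{eq:q}. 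Writing the $*$-relation $Y^c=(C^t)^{-1}YA^tz^{-1}$ of Proposition~\ref{prop:C-star-structure} entrywise yields exactly the two families $y_{1j}^{*}=q^{1/2}a_jy_{2,m-j+1}z^{-1}$ and $y_{2j}^{*}=-q^{-1/2}a_jy_{1,m-j+1}z^{-1}$; these, together with unitarity of $Y$ and of $z$, present $B(U_q(2),\GA)$ as a universal C$^*$-algebra.

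Next I would observe that the first family expresses $y_{1j}=q^{1/2}\bar a_jzy_{2,m-j+1}^{*}$ (using $z^{*}=z^{-1}$), so $B(U_q(2),\GA)$ is already generated by $z$ and the $y_{2j}$. Substituting this into the second family and using $|a_ja_{m-j+1}|=1$, so that $(\bar a_ja_{m-j+1})^{-1}=a_j\bar a_{m-j+1}$, turns the second family into the commutation relations $zy_{2j}z^{-1}=-a_j\bar a_{m-j+1}y_{2j}$; conversely, from the definition of $y_{1j}$ together with these commutation relations one recovers both original families, so nothing is lost. The same substitution gives $Y=\begin{pmatrix}z&0\\0&1\end{pmatrix}W$ with $W$ the $2\times m$ matrix in~\eqref{eq:unitary}, and since $\begin{pmatrix}z&0\\0&1\end{pmatrix}$ is unitary over the algebra, $Y$ is unitary if and only if $W$ is. Setting $y_j:=y_{2j}$ then gives the asserted presentation; the isomorphism with the universal algebra of the statement is obtained by checking that the evident $*$-homomorphisms in both directions (interchanging $y_i$ with $y_{2i}$ and fixing $z$) are well defined on the $*$-algebra level and mutually inverse on generators, and hence induce an isomorphism of the corresponding universal C$^*$-completions.

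Finally, the formula for the $\GA$-action is read off directly from Theorem~\ref{thm:mrozinski}: unpacking $(\iota\otimes\delta_P)(Y)=Y_{12}V^P_{13}$ gives $\delta_P(y_{2j})=\sum_k y_{2k}\otimes v_{kj}$, and $\delta_P(z)=z\otimes d^P=z\otimes d$, which under the isomorphism become $\delta(y_j)=\sum_k y_k\otimes v_{kj}$ and $\delta(z)=z\otimes d$. No step here is deep; the only thing that needs genuine care is the bookkeeping with the transposes and conjugates in the $*$-relation and the check that passing from the $y_{ij}$ to the generators $y_j=y_{2j}$ discards no relations.
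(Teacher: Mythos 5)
Your proposal is correct and follows essentially the same route as the paper: unwind the $*$-relation $Y^c=(C^t)^{-1}YA^tz^{-1}$ entrywise for this $(C,A)$, eliminate the first row via $y_{1j}=q^{1/2}\bar a_j z y_{2,m-j+1}^*$ to obtain the commutation relations, and note that unitarity of $Y$ is equivalent to unitarity of $\begin{pmatrix}z^*&0\\0&1\end{pmatrix}Y$, which is the matrix in~\eqref{eq:unitary}, with the coaction read off from Theorem~\ref{thm:mrozinski}. The only difference is that you spell out the (routine) converse check and the passage between universal $*$-algebra and universal C$^*$-completion, which the paper leaves implicit.
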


In order to relate $B(U_q(2),\GA)$ to $\OO_P$, recall that $\HH_P$ is an $\GA$-equivariant subproduct system and as a result, see~\cite[Section~2]{HN21}, the C$^*$-algebras $\TT_P$ and $\OO_P$ carry actions of $\GA$. Namely, the Hilbert space $H_n=f_n(\C^m)^{\otimes n}$ carries a representation $U_n$ of $\GA$ obtained by restriction from $V^{\otimes n}$. The right action of $\GA$ on $\TT_P$ is given by
$$
\delta(S)=U_P(S\otimes1)U_P^*,\quad\text{where}\quad U_P=\bigoplus^\infty_{n=0}U_n.
$$
Note that by the monoidal equivalence $\GA\sim_\otimes U_q(2)$, the quantum group $\GA$ has the same fusion rules as $U(2)$. As a result the representations $U_n\otimes d^l$, $n\ge0$, $l\in\Z$, are irreducible and pairwise nonequivalent, they exhaust the irreducible representations of $\GA$ up to equivalence, and the fusion rules are described by
\begin{equation}\label{eq:fusion-rules}
d\otimes U_n\cong U_n\otimes d,\qquad U_{k} \otimes U_l \cong U_{k+l} \oplus (U_{k+l-2}\otimes d) \oplus \cdots \oplus (U_{|k-l|}\otimes d^{\,\min\{k,l\}}).
\end{equation}
But we will not need this until the next section.

Consider also the unitary
$$
u=-A\bar A=-\begin{pmatrix}a_1\bar a_m & &0\\
& \ddots & \\
0 & & a_m\bar a_1\end{pmatrix}.
$$
It leaves $P$ invariant, hence the conjugation by the restriction of $\bigoplus_{n\ge0}u^{\otimes n}$ to $\F_P$ defines an automorphism $\beta$ of $\TT_P$ such that
$$
\beta(S_i)=-a_i\bar a_{m-i+1}S_i.
$$
We denote by $\bar\beta$ the corresponding automorphism of $\OO_P$. Let $s_i$ be the image of $S_i$ in $\OO_P$.

\begin{theorem} \label{thm:CP-crossed}
Assume $P=\sum^m_{i=1}a_iX_iX_{m-i+1}$ is a noncommutative polynomial ($m\ge2$) with $|a_ia_{m-i+1}|=1$ for all $i$, and let $q\in(0,1]$ be given by~\eqref{eq:q}.
Then there is a C$^*$-algebra isomorphism $B(U_q(2),\GA)\cong \mathcal{O}_P\rtimes_{\bar\beta} \Z $ that maps $y_i$ into $s_i$ and $z$ into the canonical unitary implementing $\bar\beta$.
\end{theorem}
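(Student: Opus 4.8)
The plan is to realize the isomorphism as a $\GA$-equivariant $*$-homomorphism and then invoke the rigidity of ergodic actions. Write $u$ for the canonical unitary of $\OO_P\rtimes_{\bar\beta}\Z$, so that $usu^{-1}=\bar\beta(s)$ for $s\in\OO_P$. First I would check that $s_1,\dots,s_m$ and $u$ satisfy the relations of Lemma~\ref{lem:generators-relations}, which yields a unital $*$-homomorphism $\Phi\colon B(U_q(2),\GA)\to\OO_P\rtimes_{\bar\beta}\Z$ with $\Phi(y_i)=s_i$ and $\Phi(z)=u$. That $u$ is unitary is clear, and $us_iu^{-1}=\bar\beta(s_i)=-a_i\bar a_{m-i+1}s_i$ is the definition of $\bar\beta$. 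Unfolding the product and reindexing, unitarity of the matrix~\eqref{eq:unitary} with $y_i$ replaced by $s_i$ is equivalent to the three relations
\[
\sum_i s_is_i^*=1,\qquad \sum_i a_is_is_{m-i+1}=0,\qquad q\sum_i|a_{m-i+1}|^2\,s_i^*s_i=1
\]
in $\OO_P$. The first holds because $1_{\F_P}-\sum_iS_iS_i^*$ is the rank-one (hence compact) projection onto $H_0$. The second holds already in $\TT_P$: for $\eta\in H_n$ we have $\sum_ia_iS_iS_{m-i+1}\eta=f_{n+2}\bigl((\id\otimes f_{n+1})(P\otimes\eta)\bigr)=f_{n+2}(P\otimes\eta)$ since $H_{n+2}\subset\C^m\otimes H_{n+1}$, and this vanishes because $P\otimes\eta$ is orthogonal to $H_{n+2}\subset P^\perp\otimes(\C^m)^{\otimes n}$. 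The third relation is the technical core and is discussed at the end.

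Next I would extend the $\GA$-action $\delta$ on $\OO_P$ to an action $\delta'$ on $\OO_P\rtimes_{\bar\beta}\Z$ by declaring $\delta'(u)=u\otimes d$. By the universal property of the crossed product this is well defined provided $\delta\circ\bar\beta=(\bar\beta\otimes\Ad d)\circ\delta$ on $\OO_P$; tested on the generators $s_i$, with $\delta(s_i)=\sum_k s_k\otimes v_{ki}$, this reduces to the identity
\[
d\,v_{ki}\,d^{*}=\frac{a_i\bar a_{m-i+1}}{a_k\bar a_{m-k+1}}\,v_{ki}\qquad\text{in }\C[\GA],
\]
which follows by iterating the identity $v_{ij}=\dfrac{a_{m-i+1}}{a_{m-j+1}}\,v^{*}_{m-i+1,\,m-j+1}\,d$ read off from~\eqref{eq:Vc} and using $|a_ja_{m-j+1}|=1$; coassociativity of $\delta'$ is immediate from $\Delta(d)=d\otimes d$. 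With $\delta'$ in place, $\Phi$ is $\GA$-equivariant, since on the generators $y_i$ and $z$ both $\delta'\circ\Phi$ and $(\Phi\otimes\iota)\circ\delta_B$ send $y_i$ to $\sum_k s_k\otimes v_{ki}$ and $z$ to $u\otimes d$, where $\delta_B$ denotes the action on $B(U_q(2),\GA)$.

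Now $\Phi$ is surjective, as the $s_i$ generate $\OO_P$ and $u$ generates the copy of $\Z$. For injectivity, the action of $\GA$ on $B(U_q(2),\GA)$ is reduced by Lemma~\ref{lem:reduced} and ergodic because $B(U_q(2),\GA)$ is a $\GA$-Galois object, so any $\GA$-equivariant $*$-homomorphism out of it is injective or zero; since $\Phi(1)=1\ne0$, $\Phi$ is injective. Hence $\Phi$ is an isomorphism with the asserted effect on the generators.

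The outstanding point is the relation $q\sum_i|a_{m-i+1}|^2 s_i^{*}s_i=1$ in $\OO_P$, i.e.\ that $q\sum_i|a_{m-i+1}|^2 S_i^{*}S_i-1_{\F_P}$ is compact, and this is where the detailed structure of the Temperley--Lieb subproduct system enters. On the Fock space, a direct unfolding identifies $\sum_i|a_{m-i+1}|^2 S_i^{*}S_i$ restricted to $H_n$ with the partial trace over the first tensor leg of the Jones--Wenzl projection $f_{n+1}$, weighted by $M=\operatorname{diag}(|a_{m-i+1}|^2)$ on that leg. The Temperley--Lieb recursion for the $f_n$, together with $\Tr M=\sum_i|a_i|^2=q+q^{-1}$ and the identity $M^{\otimes 2}P=P$ (which is exactly where $|a_ia_{m-i+1}|=1$ is used), shows that this partial trace equals $([n+2]_q/[n+1]_q)\,1_{H_n}$ in terms of quantum integers; since $[n+2]_q/[n+1]_q\to q^{-1}$, the difference above is compact. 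In the free orthogonal case $|a_i|=1$ this recovers the relation $q\sum_i s_i^{*}s_i=1$ established in~\cite{HN21}, and this Fock-space/Temperley--Lieb computation is the only substantial obstacle; the rest of the argument is formal.
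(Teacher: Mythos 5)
There is a genuine gap at the very first step, where you construct $\Phi$. The matrix in~\eqref{eq:unitary} is a $2\times m$ matrix $X$, and ``unitary'' means \emph{both} $XX^*=1_2$ and $X^*X=1_m$. Your three displayed relations are exactly the entries of $XX^*=1_2$ (indeed $q\sum_i|a_{m-i+1}|^2s_i^*s_i=1$ is the $(1,1)$-entry), but you never address $X^*X=1_m$, i.e.\ the $m\times m$ family
\[
s_i^*s_j+q\,a_i\bar a_j\,s_{m-i+1}s_{m-j+1}^*=\delta_{ij}\qquad(1\le i,j\le m),
\]
which is not a consequence of the row relations: a coisometry between Hilbert modules need not be unitary, and for scalar entries with $m>2$ the row relations can hold while the column relations cannot. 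These column relations are precisely the substantive Temperley--Lieb relations — the image in $\OO_P$ of the relations $S_i^*S_j+\phi\sum_{k,l}a_{ik}\bar a_{jl}S_kS_l^*=\delta_{ij}$ of Theorem~\ref{thm:relations} (with $\phi\to q$ in the quotient), which the paper imports from \cite[Proposition~1.7]{HN21}. Without them the map out of the universal C$^*$-algebra $B(U_q(2),\GA)$ of Lemma~\ref{lem:generators-relations} is simply not defined, so the argument collapses before the injectivity/surjectivity discussion. Note also that the ``technical core'' you sketch (the weighted partial trace of the Jones--Wenzl projections giving $[n+2]_q/[n+1]_q$) only yields that single scalar row relation; even if completed it does not produce the missing off-diagonal and individual diagonal relations, which require the full structure of $f_{n+1}$ via the Wenzl recursion (or a citation of \cite[Proposition~1.7]{HN21}).

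Apart from this, your architecture is essentially the paper's: the paper restricts to the subalgebra $B\subset B(U_q(2),\GA)$ generated by the $y_i$, gets a surjection $B\to\OO_P$ from the universal relations plus \cite[Proposition~1.7]{HN21}, concludes injectivity from reducedness (Lemma~\ref{lem:reduced}) and ergodicity, and then uses $B(U_q(2),\GA)\cong B\rtimes_{\Ad z}\Z$; you instead transport the $\GA$-action to $\OO_P\rtimes_{\bar\beta}\Z$ via $\delta'(u)=u\otimes d$ (your verification $dv_{ki}d^*=\tfrac{a_i\bar a_{m-i+1}}{a_k\bar a_{m-k+1}}v_{ki}$ from~\eqref{eq:Vc} is correct) and apply the same ergodicity trick to the full Galois object. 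That variation is fine; the missing relations are the real issue.
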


\begin{proof}
Let $B$ be the C$^*$-subalgebra of $B(U_q(2),\GA)$ generated by the elements $y_i$. Since the action of $\GA$ on $B(U_q(2),\GA)$ is ergodic and reduced by Lemma~\ref{lem:reduced}, the same is true for the action on $B$. It follows that $B$ does not have any nontrivial equivariant quotients.

It is clear from Lemma~\ref{lem:generators-relations} that condition~\eqref{eq:unitary} gives a complete set of relations in~$B$.
By \cite[Proposition 1.7]{HN21} we already know that these relations are satisfied by the elements $s_i\in\OP$. We therefore get a surjective $*$-homomorphism $\psi\colon B\to\OO_P$ such that $\psi(y_i)=s_i$. This homomorphism is $\GA$-equivariant, hence it is an isomorphism.

By Lemma~\ref{lem:generators-relations} we have $B(U_q(2),\GA)\cong B\rtimes_{\Ad z}\Z$. This allows us to extend $\psi$ to an isomorphism $B(U_q(2),\GA)\cong\OO_P\rtimes_{\bar\beta}\Z$.
\end{proof}

\begin{corollary}
The C$^*$-algebras $\TT_P$ and $\OO_P$ are nuclear.
\end{corollary}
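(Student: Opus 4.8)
The plan is to read off nuclearity of $\OO_P$ and $\TT_P$ from the isomorphism $B(U_q(2),\GA)\cong\OO_P\rtimes_{\bar\beta}\Z$ of Theorem~\ref{thm:CP-crossed}, using only the standard permanence properties of nuclearity. Concretely, once $B(U_q(2),\GA)$ is known to be nuclear, $\OO_P\rtimes_{\bar\beta}\Z$ is nuclear, and hence so is $\OO_P$, since it is the range of a conditional expectation in its crossed product by the amenable group $\Z$. Then $\K(\F_P)\subseteq\TT_P$ with $\TT_P/\K(\F_P)\cong\OO_P$, and as $\K(\F_P)$ is nuclear and nuclearity is closed under extensions, $\TT_P$ is nuclear as well.

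The real point is therefore that the Galois object $B(U_q(2),\GA)$ is nuclear, and the approach I would take is to exploit its $U_q(2)$-symmetry rather than its $\GA$-symmetry. Indeed, $B(U_q(2),\GA)$ is a C$^*$-$U_q(2)$-$\GA$-Galois object whose $U_q(2)$-action is ergodic and, by Lemma~\ref{lem:reduced}, reduced, and $U_q(2)$ is coamenable; the principle to invoke is that a C$^*$-Galois object over a coamenable compact quantum group is nuclear (this is precisely the ingredient used in~\cite{HN21} in the free orthogonal case, with $SU_q(2)$ in place of $U_q(2)$). The mechanism: the crossed product $B(U_q(2),\GA)\rtimes U_q(2)$ is an elementary C$^*$-algebra by the quantum imprimitivity theorem applied to the ergodic action of full quantum multiplicity (for the trivial Galois object $C(U_q(2))$ with the translation action this is the isomorphism $C(U_q(2))\rtimes U_q(2)\cong\K(L^2(U_q(2)))$), hence nuclear; and since coamenability of $U_q(2)$ means the discrete dual $\hat{U_q(2)}$ is amenable, the Takesaki--Takai isomorphism $\big(B(U_q(2),\GA)\rtimes U_q(2)\big)\rtimes\hat{U_q(2)}\cong B(U_q(2),\GA)\otimes\K(L^2(U_q(2)))$ together with the stability of nuclearity under crossed products by amenable discrete quantum groups forces $B(U_q(2),\GA)$ itself to be nuclear.

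The main obstacle is exactly this last step, and the subtlety is that it must be drawn from the $U_q(2)$-side: like the free orthogonal quantum groups, $\GA$ is typically not coamenable (it shares the fusion rules of $U(2)$ but carries a ``too large'' dimension function), so $C(\GA)$ need not be nuclear and there is no shortcut through $\GA$. Everything else---the reduction in the first paragraph and the permanence of nuclearity under extensions and under crossed products by $\Z$---is routine.
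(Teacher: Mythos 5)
Your argument is correct and follows essentially the same route as the paper: there, nuclearity of $\OO_P\rtimes_{\bar\beta}\Z\cong B(U_q(2),\GA)$ is deduced from coamenability of $U_q(2)$ together with ergodicity of its action, citing \cite{DLRS} for the fact that ergodic actions of coamenable compact quantum groups produce nuclear C$^*$-algebras, and the passage to $\OO_P$ (crossed product by $\Z$) and then to $\TT_P$ (extension by $\K(\F_P)$) is exactly as you describe. The only divergence is that where the paper invokes that citation, you sketch a proof in the Galois-object case ($B(U_q(2),\GA)\rtimes U_q(2)$ elementary plus Takesaki--Takai duality), which works but still rests on the nontrivial external fact that crossed products by amenable discrete quantum groups preserve nuclearity, so it trades one black box for another rather than being more elementary.
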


\bp
As $U_q(2)$ is coamenable and acts ergodically on $B(U_q(2),\GA)$, the C$^*$-algebra $\OO_P\rtimes_{\bar\beta}\Z\cong B(U_q(2),\GA)$ is nuclear by~\cite{DLRS}. This implies the result.
\ep

\begin{remark} \label{rem:connected}
The most nontrivial part of the proof of Theorem~\ref{thm:mrozinski} given in~\cite{Mrozinski} is that $\B(U_q(2),\GA)\ne0$. The proof of Theorem~\ref{thm:CP-crossed}, showing that a quotient of $\B(U_q(2),\GA)$ is a dense subalgebra of $\OO_P\rtimes_{\bar\beta}\Z$, provides another argument for this. In other words, subproduct systems can be used to give an alternative proof of Theorem~\ref{thm:mrozinski} and simultaneously show that the $*$-algebras $\B(\GC,\GA)$ have nonzero representations on Hilbert spaces. Note that in order to prove Proposition~\ref{prop:C-star-structure} we then no longer need Corollary~\ref{cor:star-criterion}, it suffices to apply implication $(2)\Rightarrow(1)$ in Theorem~\ref{thm:Bichon}. This implication is, in turn, an easy consequence of Lemma~\ref{lem:Bichon}. Note also that thanks to~\cite[Lemma~1.6]{HN21} we do know that $\OO_P\ne0$. \ee
\end{remark}

From the proof of Theorem~\ref{thm:CP-crossed} we get a complete set of relations for $\OO_P\cong B$. This allows us to extend \cite[Theorem 3.4]{HN21} to all Temperley--Lieb polynomials. The proof is identical to that in \cite{HN21} and is therefore omitted, but we formulate the result for general Temperley--Lieb polynomials up to normalization.

\begin{theorem}\label{thm:relations}
Assume $A=(a_{ij})_{i,j}\in\GL_m(\C)$ ($m\ge2$) is such that $A\bar A$ is unitary. Let $q\in(0,1]$ be the number such that $\Tr(A^*A)=q+q^{-1}$. Consider the noncommutative polynomial $P=\sum^m_{i,j=1}a_{ij}X_iX_j$.  Then $\mathcal{T}_P$ is a universal C$^*$-algebra generated by $c = C(\Z_+ \cup \{\infty\})$ and $S_1,S_2,...,S_m$ satisfying the relations
\[ fS_i = S_i\gamma(f) \quad (f \in c,\ 1 \leq i \leq m), \quad \sum^m_{i=1} S_iS_i^* = 1 - e_0, \quad \sum^m_{i,j=1} a_{ij}S_iS_j = 0,  \]
\[S_i^*S_j + \phi\sum^m_{k,l=1}a_{ik}\bar{a}_{jl}S_kS_l^* = \delta_{ij}1 \quad (1 \leq i,j \leq m). \]
\end{theorem}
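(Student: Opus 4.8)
The plan is to follow the argument of \cite[Theorem~3.4]{HN21}: realize $\TT_P$ as an extension of $\OO_P$ by $\K(\FP)$, identify the ideal and the quotient of the universal C$^*$-algebra defined by the listed relations with $\K(\FP)$ and $\OO_P$ respectively, and conclude by the five lemma. Write $\TT^{\mathrm u}$ for the universal unital C$^*$-algebra generated by a copy of $c=C(\Z_+\cup\{\infty\})$ and elements $S_1,\dots,S_m$ subject to the four families of relations in the statement; here $\gamma$ is the shift $\gamma(e_n)=e_{n-1}$, $\gamma(e_0)=0$, and $\phi\in c$ is the positive element extracted from the Jones--Wenzl recursion for the Temperley--Lieb projections $f_n\colon(\C^m)^{\otimes n}\to H_n$, which in particular satisfies $\phi(\infty)=q$. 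By the unitary change of variables of \cite[Proposition~1.5]{HN21} we may and will assume $P=\sum_{i=1}^m a_iX_iX_{m-i+1}$ with $|a_ia_{m-i+1}|=1$; this substitution turns the listed relations into the corresponding ones for the normalized polynomial and identifies the two universal algebras, so nothing is lost.

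First I would produce a surjection $\rho\colon\TT^{\mathrm u}\to\TT_P$ by verifying the relations for the concrete generators: the projections $e_n$ onto $H_n\subset\FP$, the diagonal action of $c$ on $\FP$, and the shift operators $S_i$. The relations $fS_i=S_i\gamma(f)$ and $\sum_iS_iS_i^*=1-e_0$ are immediate, $\sum_{i,j}a_{ij}S_iS_j=0$ is \cite[Proposition~1.7]{HN21} (it says $P$ lies in $\ker f_2$), and $S_i^*S_j+\phi\sum_{k,l}a_{ik}\bar a_{jl}S_kS_l^*=\delta_{ij}$ is exactly the Wenzl recursion for $f_{n+1}$ read off on the Fock space, as computed in \cite{HN21}. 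Next I would identify the ideal $J\subset\TT^{\mathrm u}$ generated by $e_0$. From $e_0S_i=S_i\gamma(e_0)=0$ and $\sum_iS_iS_i^*=1-e_0$ one gets $e_{n+1}=\sum_iS_ie_nS_i^*$, so every $e_n$ lies in $J$. The crux is a normal-form lemma: using $\sum_{i,j}a_{ij}S_iS_j=0$ to reduce products $S_w$ and the $S_i^*S_j$-relation to move adjoints to the right, one shows $\TT^{\mathrm u}$ is the closed span of the elements $S_wfS_{w'}^{*}$ ($w,w'$ words, $f\in c$). Since $e_0S_w=0$ and $S_{w'}^{*}e_0=0$ for nonempty words, this gives $e_0\TT^{\mathrm u}e_0=\C e_0$, i.e.\ $e_0$ is a rank-one projection, whence $J\cong\K(H)$ for the Hilbert space $H=\overline{\TT^{\mathrm u}e_0}$ with $\langle xe_0,ye_0\rangle e_0=e_0y^*xe_0$. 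The $S_i^*S_j$-relation also lets one compute the Gram matrix $\langle S_we_0,S_{w'}e_0\rangle$ inside $\TT^{\mathrm u}$, and it coincides with that of the spanning vectors $f_{|w|}(\xi_w)$ of $\FP$; thus $H\cong\FP$ and $\rho$ restricts to an isomorphism $J\xrightarrow{\ \sim\ }\K(\FP)$.

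Consequently $\rho$ descends to $\bar\rho\colon\TT^{\mathrm u}/J\to\TT_P/\K(\FP)=\OO_P$. To see this is an isomorphism, note that in $\TT^{\mathrm u}/J$ the image of $c$ collapses to $\C$ and $\phi$ to the scalar $\phi(\infty)=q$, so the relations become $\sum_is_is_i^{*}=1$, $\sum_{i,j}a_{ij}s_is_j=0$ and $s_i^{*}s_j+q\sum_{k,l}a_{ik}\bar a_{jl}s_ks_l^{*}=\delta_{ij}$; in the normalized form of $P$, and using $\sum_i|a_i|^2=\sum_i|a_i|^{-2}=q+q^{-1}$, a direct computation shows these are equivalent to the relation~\eqref{eq:unitary}, which by the proof of Theorem~\ref{thm:CP-crossed} is a complete set of relations for $B\cong\OO_P$. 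Hence there is a surjective $*$-homomorphism $\sigma\colon\OO_P\to\TT^{\mathrm u}/J$, $s_i\mapsto\bar S_i$, with $\bar\rho\circ\sigma=\id_{\OO_P}$; since $\TT^{\mathrm u}/J$ is generated by the $\bar S_i$, the map $\sigma$ is also injective, so $\sigma$ and $\bar\rho$ are isomorphisms. Applying the five lemma to the morphism of extensions
\[
\begin{array}{ccccccccc}
0 & \to & J & \to & \TT^{\mathrm u} & \to & \TT^{\mathrm u}/J & \to & 0 \\
  &     & \downarrow\rho|_J & & \downarrow\rho & & \downarrow\bar\rho & & \\
0 & \to & \K(\FP) & \to & \TT_P & \to & \OO_P & \to & 0
\end{array}
\]
with the two outer vertical maps isomorphisms then gives that $\rho$ is an isomorphism, which is the assertion.

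I expect the main obstacle to be the normal-form lemma: proving purely from the listed relations that $\TT^{\mathrm u}$ is spanned by the $S_wfS_{w'}^*$, hence that $e_0$ is rank one (equivalently, that the elements $S_we_0$ span a copy of $\FP$ and not of the larger tensor algebra). This is where the Temperley--Lieb combinatorics — the interplay of $\sum a_{ij}S_iS_j=0$ with the $S_i^*S_j$-relation, together with the monoidal equivalence $\GA\sim_\otimes U_q(2)$ that pins down $q$ — enters, and it is the part treated in detail in \cite{HN21}.
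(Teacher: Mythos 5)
Your proposal is correct and follows essentially the route the paper has in mind: the paper omits the proof precisely because it is the argument of \cite[Theorem~3.4]{HN21} — verify the relations in $\TT_P$, identify the ideal generated by $e_0$ in the universal algebra with $\K(\FP)$ via the normal form $S_wfS_{w'}^*$, and identify the quotient with $\OO_P$ using the complete set of relations~\eqref{eq:unitary} supplied by Theorem~\ref{thm:CP-crossed} — which is exactly what you do, including the correct reduction of the fourth relation at $\phi(\infty)=q$ to unitarity of the $2\times m$ matrix.
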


Here $c$ is identified with a unital subalgebra of $\K(\mathcal{F}_P)+\C1 \subset \mathcal{T}_P$, with the characteristic function $e_n$ of $\{n\}$ identified with the projection $\mathcal{F}_P \to H_n$. The endomorphism $\gamma\colon c\to c$ is the shift to the left, and $\phi\in c$ is defined by
\begin{equation} \label{eq:phi(n)}
\phi(n)= \dfrac{[n]_q}{[n+1]_q},\quad\text{where}\quad [n]_q= \dfrac{q^{n}-q^{-n}}{q - q^{-1}}.
\end{equation}

\bigskip

\section{K-theory of \texorpdfstring{$\TP$}{TP}}\label{sec:K-theory-T}

The goal of this section is to prove the following result generalizing and strengthening \cite[Theorem~5.1]{HN21}.

\begin{theorem} \label{thm:KK-Toeplitz}
For every Temperley--Lieb polynomial $P$, the embedding map $\C\to\TT_P$ is a $KK^{\GA}$-equivalence.
\end{theorem}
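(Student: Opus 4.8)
The plan is to follow the strategy of \cite{HN21}, using the Baum--Connes property for the dual of $U_q(2)$ (established in Appendix~\ref{app:A}) in place of the one for $\widehat{SU_q(2)}$. The key point is that by the isomorphism $B(U_q(2),\GA)\cong\OO_P\rtimes_{\bar\beta}\Z$ of Theorem~\ref{thm:CP-crossed}, all the relevant C$^*$-algebras carry not only an action of $\GA$ but, via the monoidal equivalence $\GA\sim_\otimes U_q(2)$, can be transported to $U_q(2)$-C$^*$-algebras, and monoidal equivalence induces an equivalence $KK^{\GA}\simeq KK^{U_q(2)}$ (this is the categorical $KK$-invariance used already in \cite{HN21}). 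So it suffices to prove the corresponding statement after transporting everything along this equivalence, where we may use the Baum--Connes property for $\widehat{U_q(2)}$: namely, that every separable $U_q(2)$-C$^*$-algebra is built, in $KK^{U_q(2)}$, out of C$^*$-algebras with trivial $U_q(2)$-action, so that $KK^{U_q(2)}(A,B)$ can be computed from $KK$ of fixed-point/crossed-product data.

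First I would set up the Toeplitz extension $0\to\K(\F_P)\to\TT_P\to\OO_P\to 0$, which is $\GA$-equivariant since $\HH_P$ is an $\GA$-equivariant subproduct system. The embedding $\C\to\TT_P$ factors through $\K(\F_P)$ via the rank-one projection $e_0$ onto $H_0$, and since $\GA$ acts trivially on $H_0$ this inclusion $\C\hookrightarrow\K(\F_P)$ is an equivariant corner embedding; one shows it is a $KK^{\GA}$-equivalence, i.e.\ $\K(\F_P)$ is $KK^{\GA}$-equivalent to $\C$, exactly as in \cite{HN21} --- here one uses that $\F_P=\bigoplus_n H_n$ as a $\GA$-representation and the Fock space has a suitable equivariant structure making the compacts equivariantly Morita equivalent to $\C$ (the relevant statement is that $\K(\F_P)\rtimes\widehat{\GA}$ is, through the Baum--Connes picture, Morita equivalent to $\C\rtimes\widehat{\GA}$). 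Then, in the six-term exact sequence in $KK^{\GA}(\C,-)$ associated to the Toeplitz extension, the boundary map $KK^{\GA}(\C,\OO_P)\to KK^{\GA}(\C,\Sigma\K(\F_P))$ controls everything, and it suffices to show this boundary map is an isomorphism, equivalently that $\OO_P$ is $KK^{\GA}$-trivial, equivalently (by the long exact sequence) that the inclusion $\K(\F_P)\hookrightarrow\TT_P$ is a $KK^{\GA}$-equivalence.

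The heart of the argument is therefore to show $\OO_P$ is $KK^{\GA}$-equivalent to zero (equivalently $\OO_P\rtimes\widehat{U_q(2)}$ has vanishing $K$-theory after the transport). As in \cite{HN21} this is done by exhibiting $\OO_P$ as a (generalized) crossed product / mapping-torus type construction and using a Pimsner--Voiculescu six-term sequence in which the relevant endomorphism acts as the identity on $K$-theory, forcing the $K$-groups to cancel in pairs; concretely, one uses the structure $B(U_q(2),\GA)\cong\OO_P\rtimes_{\bar\beta}\Z$ and the fact that $B(U_q(2),\GA)$ is a Galois object, hence (after transport) $KK^{U_q(2)}$-equivalent to $C(U_q(2))$ with its translation action, whose equivariant $K$-theory is computable and matches that of a point up to the expected shift, the $\bar\beta$-crossed product then accounting exactly for the discrepancy. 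The main obstacle I expect is precisely this step: carefully controlling the equivariant $K$-theory of the Galois object $B(U_q(2),\GA)$ --- one must check that monoidal equivalence identifies its $KK^{\GA}$-class with that of $C(U_q(2))$, and then that the Pimsner--Voiculescu sequence for $\bar\beta$ behaves as in the $SU_q(2)$ case. Everything else (the Toeplitz extension, triviality of $\K(\F_P)$, the diagram chase in the six-term sequence) is a routine adaptation of \cite{HN21}, with the computations, as the introduction warns, somewhat more involved because the fusion rules \eqref{eq:fusion-rules} are those of $U(2)$ rather than $SU(2)$ and one must keep track of the extra grading by the chain group $\Z$.
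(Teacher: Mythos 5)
The first part of your plan (transporting everything along the monoidal equivalence $\GA\sim_\otimes U_q(2)$, so that it suffices to treat $P=q^{-1/2}X_1X_2-q^{1/2}X_2X_1$, and then invoking the Baum--Connes property for the dual of $U_q(2)$) is exactly the paper's reduction. But the heart of your argument contains a genuine error. The embedding $\C\to\TT_P$ in the statement is the \emph{unital} embedding $1\mapsto 1_{\TT_P}$; it does not factor through $\K(\F_P)$ via $e_0$, since $e_0\neq 1$ and in fact $[e_0]=(2-m)[1]$ in $K_0(\TT_P)$. Consequently your reformulation ``it suffices to show that $\K(\F_P)\hookrightarrow\TT_P$ is a $KK^{\GA}$-equivalence, equivalently that $\OO_P$ is $KK^{\GA}$-trivial'' is not equivalent to the theorem, and moreover the target statement is simply false: the paper's Corollary~\ref{cor:K-theory-O} gives $K_0(\OO_P)\cong\Z/(m-2)\Z$ and, for $m=2$, $\OO_P\cong C(SU_{\bar q}(2))$ with $K_0\cong K_1\cong\Z$. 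So the Pimsner--Voiculescu cancellation you hope for cannot produce $KK$-triviality of $\OO_P$, and the plan cannot be completed as written.

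What the paper actually does after the reduction is different in structure: by the consequence of Baum--Connes (\cite[Proposition~5.2]{HN21}), the unital embedding is a $KK^{U_q(2)}$-equivalence iff the induced map $\pi\colon C^*(U_q(2))\to\TT_P\rtimes U_q(2)$, $a\mapsto 1\otimes\rho(a)$, is a $KK$-equivalence. The Toeplitz extension is used only to describe $\TT_P\rtimes U_q(2)$: its ideal is $c_0\text{-}\bigoplus_{k,l}\K(\F_P\otimes H_{k,l})$ and its quotient is $\OO_P\rtimes U_q(2)\cong c_0\text{-}\bigoplus_s\K(\HH_s)$, the latter via Theorem~\ref{thm:CP-crossed} (which identifies $\OO_P$ with $C(\T\backslash U_q(2))$) and Takesaki--Takai duality --- not to show the quotient vanishes. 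Both $C^*(U_q(2))$ and $\TT_P\rtimes U_q(2)$ are then type I with trivial $K_1$, so by the UCT it suffices to check that $\pi_*$ is an isomorphism on $K_0$, which is done by an explicit computation of $\pi_*([U_{k,l}])$ in the basis $[p_{m,n}],[\tilde q_s]$ using the $U(2)$ fusion rules~\eqref{eq:fusion-rules} and a triangularity argument. If you want to salvage your write-up, replace the ``$\OO_P$ is $KK^{\GA}$-trivial'' step by this crossed-product $K_0$ computation (or some equivalent verification that $\pi$ is a $K$-theory isomorphism); the Galois-object input enters only through the identification of $\OO_P\rtimes U_q(2)$, where your remark that $B(U_q(2),\GA)$ corresponds to $C(U_q(2))$ under the transport is in the right spirit.
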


The strategy is similar to that in~\cite{HN21}. Using the monoidal equivalence $\GA\sim_\otimes U_q(2)$ and arguing as in~\cite[Section~5]{HN21}, it suffices to prove the theorem for the polynomials
$$
P=q^{-1/2}X_1X_2-q^{1/2}X_2X_1,\quad 0<q\le1.
$$
Fix such a polynomial $P$. We know by \cite[Theorem~5.1]{HN21} that $\C\to\TT_P$ is a $KK^{SU_q(2)}$-equivalence, and we need to upgrade this to a $KK^{U_q(2)}$-equivalence.

For the proof of the $KK^{SU_q(2)}$-equivalence in \cite{HN21} we used the Baum--Connes conjecture for the discrete dual of $SU_q(2)$ established by Voigt~\cite{Voigt-BC}. An inspection of Voigt's arguments shows that with minor modifications they work for the dual of $U_q(2)$ as well; we give the precise formulation of the result and some details of the necessary modifications in Appendix~\ref{app:A}. A~standard consequence of this, see~\cite[Proposition~5.2]{HN21}, is that given separable $U_q(2)$-C$^*$-algebras $C$ and $D$, an element of $KK^{U_q(2)}(C,D)$ is a $KK^{U_q(2)}$-equivalence if and only if it defines a $KK$-equivalence between $C\rtimes U_q(2)$ and $D \rtimes U_q(2)$.

In order to take advantage of this consequence of the Baum--Connes conjecture, consider the exact sequence
\begin{equation}
\label{eq:SES_crossed}
0 \to \K(\mathcal{F}_P) \rtimes U_q(2)  \to \mathcal{T}_P \rtimes U_q(2) \to \mathcal{O}_P \rtimes U_q(2) \to 0.
\end{equation}
Recall that given an action $\alpha\colon B \to B \otimes C(U_q(2))$, the crossed product is by definition the C$^*$-algebra
\[B \rtimes U_q(2) = \overline{\alpha(B)(1\otimes \rho(C^*(U_q(2))))}^{\| \cdot \|},\]
where $\rho\colon C^*(U_q(2))\to B(L^2(U_q(2)))$ is the right regular representation.

The C$^*$-algebras $\K(\mathcal{F}_P) \rtimes U_q(2)$ and $\mathcal{O}_P \rtimes U_q(2)$ in~\eqref{eq:SES_crossed} have a simple description.
First, by Theorem~\ref{thm:CP-crossed} we have
\[\OP \cong {}^\T B(U_q(2),U_q(2))={}^\T C(U_q(2))=C(\T \backslash U_q(2)) ,\]
where $\mathbb{T}$ denotes the circle $\begin{pmatrix}\T & 0\\ 0 & 1\end{pmatrix}\subset U_q(2)$ formally defined by the surjective map
\[C(U_q(2)) \to C(\T), \quad v_{11},d\mapsto z,\quad v_{22}\mapsto1,\quad  v_{12},v_{21}\mapsto0.\]
Since $C(U_q(2))\rtimes U_q(2)\cong\K(L^2(U_q(2)))$ by the Takesaki--Takai duality~\cite[Theorem~5.33]{DC}, we obtain
\[\mathcal{O}_P\rtimes U_q(2) \cong {}^\T \K(L^2(U_q(2))) = c_0\text{-}\bigoplus_{s\in \Z} \K(\mathcal{H}_s), \]
where $L^2(U_q(2)) = \bigoplus_{s\in\Z}\mathcal{H}_s$ is decomposed according to the grading induced by the left action of $\T$ and $c_0\text{-}\bigoplus_{s\in \Z} \K(\mathcal{H}_s)$ consists by definition of families of operators $(T_s)_{s\in\Z}$ such that $T_s\in\K(\mathcal{H}_s)$ and $\|T_s\|\to0$ as $|s|\to+\infty$.

Next, the conjugation by $U_P^*$ gives an isomorphism
\[\K(\mathcal{F}_P) \rtimes U_q(2) \cong \K(\mathcal{F}_P) \otimes \rho(C^*(U_q(2))) \cong c_0\text{-}\bigoplus_{k\ge0,\ l\in\Z} \K(\mathcal{F}_P \otimes H_{k,l} ) , \]
where $H_{k,l} = H_k$ is the Hilbert underlying the representation $U_{k,l} := U_k \otimes d^l$.

Now, we may rewrite the short exact sequence (\ref{eq:SES_crossed}) as
\begin{equation} \label{eq:SES-rewritten}
 0 \to c_0\text{-}\bigoplus_{k,l} \K(\FP \otimes H_{k,l}) \to \TP \rtimes U_q(2) \to c_0\text{-}\bigoplus_{s} \K(\mathcal{H}_s) \to 0.
\end{equation}
From this we conclude in particular that $\TP \rtimes U_q(2)$ is a Type I C$^*$-algebra with trivial $K_1$-group. Because the same holds for $C^*(U_q(2))$, the Universal Coefficient Theorem applies and we see that in order to prove Theorem \ref{thm:KK-Toeplitz} it suffices to show that the map
\[ \pi\colon C^*(U_q(2)) \to \TP \rtimes U_q(2), \quad a \mapsto 1 \otimes \rho(a),  \]
induces an isomorphism of the $K_0$-groups.

Before we embark on the computation of $\pi_*\colon K_0(C^*(U_q(2)))\to K_0(\TT_P\rtimes U_q(2))$, we introduce some notation and record a small lemma. We identify $K_0(C^*(U_q(2))$ with the representation ring $\bigoplus_{k,l} \Z [U_{k,l}]$ of $U_q(2)$, so $[U_{k,l}]$ corresponds to a rank one projection in $B(H_{k,l})\subset C^*(U_q(2))$. We also fix rank one projections $p_{k,l} \in \K(\FP\otimes H_{k,l})$ and $q_s \in \K(\mathcal{H}_s)$ for each $k \in \Z_+,$ $l,s \in \Z$.

\begin{lemma}\label{lem:rho-star}
With the notation as above,
$$
\rho_*( [U_{k,l}]) = \sum_{s=0}^k [q_{-l-s}]\quad\text{for all}\quad k \in \Z_+,\ l \in \Z,
$$
where $\rho$ is the right regular representation viewed as a homomorphism
$$
C^*(U_q(2))\to c_0\text{-}\bigoplus_{s\in\Z}\K(\HH_s)\subset\K(L^2(U_q(2))).
$$
\end{lemma}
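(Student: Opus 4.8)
The plan is to compute $\rho_*$ directly from the Peter--Weyl decomposition of $L^2(U_q(2))$, together with the fact, already in force here, that $U_q(2)$ has the fusion rules of $U(2)$. First I would record the structure that is needed. Since the irreducible unitary representations of $U_q(2)$ are the $U_{k,l}=U_k\otimes d^l$ ($k\in\Z_+$, $l\in\Z$), we have $C^*(U_q(2))=c_0\text{-}\bigoplus_{k,l}B(H_{k,l})$, and under the identification of $K_0$ with the representation ring the class $[U_{k,l}]$ is the class of a minimal projection $e_{k,l}\in B(H_{k,l})$. By Peter--Weyl, $L^2(U_q(2))=\bigoplus_{k,l}H_{k,l}\otimes\bar H_{k,l}$; on the $(k,l)$-summand the right regular representation $\rho$ of $C^*(U_q(2))$ vanishes off the block $B(H_{k,l})$ and there acts on the second leg (through the canonical faithful representation of $B(H_{k,l})$ on $\bar H_{k,l}$), whereas the left translation action of $\T\subset U_q(2)$ defining the grading $\bigoplus_s\mathcal{H}_s$ acts on the first leg through $U_{k,l}|_\T$. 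In particular $\rho(e_{k,l})$ is the projection onto $H_{k,l}\otimes\C\bar\eta$ for some unit vector $\bar\eta\in\bar H_{k,l}$, it commutes with each projection $P_s\colon L^2(U_q(2))\to\mathcal{H}_s$, so that $a\mapsto P_s\rho(a)$ is a $*$-homomorphism $C^*(U_q(2))\to\K(\mathcal{H}_s)$ carrying $e_{k,l}$ to a projection of rank $\dim\big((H_{k,l})_{\chi_s}\big)$, the multiplicity of the character $\chi_s\colon z\mapsto z^s$ of $\T$ in $U_{k,l}|_\T$. Hence, writing $\rho_*([U_{k,l}])=\sum_s n_s[q_s]$ under the identification $K_0\big(c_0\text{-}\bigoplus_s\K(\mathcal{H}_s)\big)=\bigoplus_s\Z[q_s]$, we obtain $n_s=\dim(H_{k,l})_{\chi_s}$.

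Next I would determine the weights of $U_{k,l}|_\T$. From the surjection $C(U_q(2))\to C(\T)$ sending $v_{11},d\mapsto z$, $v_{22}\mapsto1$, $v_{12},v_{21}\mapsto0$ we get $V|_\T=\mathrm{diag}(z,1)$, so with respect to left translation the two basis vectors of $H_1$ have weights $-1$ and $0$; thus $U_1|_\T=\chi_0\oplus\chi_{-1}$ and $d^l|_\T=\chi_{-l}$. Using the fusion rule $U_{k-1}\otimes U_1\cong U_k\oplus(U_{k-2}\otimes d)$ (a special case of~\eqref{eq:fusion-rules}), an easy induction on $k$ — compute $(U_{k-1}\otimes U_1)|_\T$ from the inductive hypothesis and subtract $(U_{k-2}\otimes d)|_\T$ — gives $U_k|_\T=\chi_0\oplus\chi_{-1}\oplus\cdots\oplus\chi_{-k}$, each character with multiplicity one. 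Therefore $U_{k,l}|_\T=\bigoplus_{s=0}^k\chi_{-l-s}$, so $\dim(H_{k,l})_{\chi_s}$ equals $1$ for $s\in\{-l,-l-1,\dots,-l-k\}$ and $0$ otherwise. Combined with the first paragraph this yields $\rho_*([U_{k,l}])=\sum_{s=0}^k[q_{-l-s}]$, as claimed.

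The computation is short once the conventions are pinned down, so the one genuinely delicate point is the sign bookkeeping in the middle step: correctly matching the grading index $s$ of $\mathcal{H}_s$ with a character of $U_{k,l}|_\T$, and in particular keeping track of the inversion built into the left translation action — this is precisely what turns the weights $0,1,\dots,l+k$ that appear naively into $0,-1,\dots,-l-k$. I would also be careful to spell out why $\rho$ acts through the second tensor leg of each Peter--Weyl block, and hence commutes with the left $\T$-grading, since that is what makes $a\mapsto P_s\rho(a)$ a homomorphism and lets the argument go through summand by summand; everything else is either standard Peter--Weyl theory or the elementary induction above.
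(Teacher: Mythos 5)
Your argument is correct and follows essentially the same route as the paper: decompose $L^2(U_q(2))$ by Peter--Weyl with $\rho$ acting through $B(H_{k,l})$ on one leg and the left $\T$-grading on the other, then count $\T$-weights on that block. The only cosmetic difference is that you obtain the weights $-l,-l-1,\dots,-l-k$ by induction on the fusion rules (with the inversion from left translation made explicit), whereas the paper phrases the same count via the dual representation $H_{k,-l-k}$ and the classical $U(2)$ model on homogeneous polynomials.
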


\begin{proof}
By the fusion rules~\eqref{eq:fusion-rules}, the dual of the $U_q(2)$-module $H_{k,l}$ is $H_{k,-l-k}$. By the Peter--Weyl theory, it follows that the space $L^2(U_q(2))$ decomposes as
$$
L^2(U_q(2))\cong \bigoplus_{k,l}H_{k,-l-k}\otimes H_{k,l}
$$
with respect to the left and right regular representations acting on the first and second factor, resp.
It follows that $\rho_*([U_{k,l}]) = \sum_s  c_s[q_s]$, where $c_s$ is the multiplicity of the weight $s$ in $H_{k,-l-k}$ with respect to our chosen torus $\T\subset U_q(2)$. These multiplicities are the same as for~$U(2)$, where~$d$ is the determinant and the dual $U_{k,-k}$ of $U_k$ is equivalent to the representation by the change of variables on the space of homogeneous polynomials (in two commuting variables) of degree $k$. We then see that $H_{k,-l-k}$ has weights ${-l},{-l-1},\dots,{-l-k}$, each of multiplicity one. (See also Lemma~\ref{lem:weights} for a similar more formal computation.) This gives the result.
\end{proof}

\begin{proof}[Proof of Theorem \ref{thm:KK-Toeplitz}]
Let $[\tilde q_s]=\pi_*([U_{0,-s}])$. Then, by the previous lemma, $[\tilde q_s]$ is a lift of $[q_s]$ to $K_0(\TT_P\rtimes U_q(2))$. From~\eqref{eq:SES-rewritten} it follows that $K_0(\TT_P\rtimes U_q(2))$ is a free abelian group with a basis consisting of the classes $[p_{m,n}]$ and $[\tilde q_s]$ ($m\ge0$, $n,s\in\Z$). Therefore, for fixed $k$ and $l$, we have
\[\pi_*([U_{k,l})] = \sum_{m,n} c_{m,n} [p_{m,n}] + \sum_s c_s [\tilde{q}_s] \quad\text{for some}\quad c_s, c_{m,n} \in \Z. \]

Consider the commutative diagram
\begin{equation}\label{eq:diagram}
\begin{tikzcd}
\K(\mathcal{F}_P \otimes H_{m,n}) & \arrow{l}[swap]{} \mathcal{T}_P \rtimes U_q(2) \arrow[r]  & c_0\text{-}\bigoplus_{s \in \Z} \K(\mathcal{H}_s) \\
& C^*(U_q(2)) \arrow{u}{\pi} \arrow{ur}[swap]{\rho} \arrow{ul}{ \pi_{U_P\otimes U_{m,n}}  } &
\end{tikzcd}
\end{equation}
where $\pi_{U_P\otimes U_{m,n}}$ is the integrated form of the representation $U_P\otimes U_{m,n}$ of $U_q(2)$ and the left horizontal arrow comes from the action of $\TT_P\rtimes U_q(2)$ on its ideal $\K(\F_P)\rtimes U_q(2)\cong c_0\text{-}\bigoplus_{m,n} \K(\FP \otimes H_{m,n})$. Note that a priori we have only representations of $C^*(U_q(2))$ and $\TT_P\rtimes U_q(2)$ on $\F_P\otimes H_{m,n}$, but these are representations by compact operators since the multiplicity of $U_{k,l}$ in $U_P\otimes U_{m,n}$ is finite, see below.

The right triangle in~\eqref{eq:diagram} combined with (\ref{eq:SES-rewritten}) and Lemma~\ref{lem:rho-star} yields $c_s = 1$ for $s=-l,-l-1,...,-l-k$ and $c_s = 0$ otherwise. Then, looking at the left triangle in~\eqref{eq:diagram}, we get
\[ (\pi_{U_P\otimes U_{m,n}})_*([U_{k,l})] = c_{m,n}[p_{m,n}] + \sum_{s=0}^{k}(\pi_{U_P\otimes U_{m,n}})_*([U_{0,l+s}])
\]
in $K_0(\K(\F_P\otimes H_{m,n}))=\Z$. The number $(\pi_{U_P\otimes U_{m,n}})_*([U_{k,l})]$ is the multiplicity of $U_{k,l}$ in $U_P\otimes U_{m,n}$, equivalently, by Frobenius reciprocity (see \cite[Theorem~2.2.6]{NT13}), it is
$$
\dim\Mor_{U_q(2)}(U_{k,l}\otimes U_{m,-m-n},U_P).
$$
By the fusion rules~\eqref{eq:fusion-rules},
$$
U_{k,l}\otimes U_{m,-m-n}\cong U_{k+m,l-m-n}\oplus U_{k+m-1,l-m-n+1}\oplus\dots\oplus U_{|k-m|,l-m-n+\min\{m,k\}}.
$$
Therefore the space $\Mor_{U_q(2)}(U_{k,l}\otimes U_{m,-m-n},U_P)$ is one-dimensional if the interval $$[l-m-n,l-m-n+\min\{m,k\}]$$ contains zero, and it is zero otherwise. In other words,
$$
(\pi_{U_P\otimes U_{m,n}})_*([U_{k,l})]=\begin{cases}
                                         1, & \mbox{if } m+n\in[l,l+\min\{m,k\}], \\
                                         0, & \mbox{otherwise}.
                                       \end{cases}
$$
For the same reason
$$
(\pi_{U_P\otimes U_{m,n}})_*([U_{0,l+s}])=\begin{cases}
                                         1, & \mbox{if } m+n=l+s, \\
                                         0, & \mbox{otherwise}.
                                       \end{cases}
$$
It follows that
\begin{align*}
c_{m,n}&=(\pi_{U_P\otimes U_{m,n}})_*([U_{k,l})]-\sum^k_{s=0}(\pi_{U_P\otimes U_{m,n}})_*([U_{0,l+s}])\\
&=\begin{cases}
                                         -1, & \mbox{if } l+\min\{m,k\}<m+n\le l+k, \\
                                         0, & \mbox{otherwise}.
                                       \end{cases}
\end{align*}

In conclusion,
$$
\pi_*([U_{k,l})]=-\sum^{k-1}_{m=0}\sum^{l+k-m}_{n=l+1}[p_{m,n}]+\sum^{-l}_{s=-l-k}[\tilde q_s].
$$
The elements $\pi_*([U_{0,l}])=[\tilde q_{-l}]$, $l\in\Z$, are independent. Observe next that in the formula for $\pi_*([U_{k,l})]$ for $k\ge1$ the combination of $[p_{m,n}]$'s with the largest $m$ is simply $-[p_{k-1,l+1}]$. From this it is easy to see by induction on~$k_0$ that the elements $\pi_*([U_{k,l})]$, with $0\le k\le k_0$ and $l\in\Z$, are independent and generate the same subgroup as $[p_{m,n}]$ and $[\tilde q_s]$, with $0\le m<k_0$ and $s\in\Z$. Hence $\pi_*$ is an isomorphism.
\end{proof}

\bigskip

\section{K-theory of \texorpdfstring{$\OP$}{OP}}\label{sec:K-theory-O}

In this section we compute the $K$-theory of $\OO_P$ for all Temperley--Lieb polynomials $P=\sum^m_{i,j=1}a_{ij}X_iX_j$. For this we first construct an inverse of the embedding map $\C\to\TT_P$  in~$KK^{\GA}$. The construction is an adaption of the work of Arici and Kaad~\cite{Arici-Kaad}, who defined an inverse in $KK^{SU(2)}$ for the polynomials $\sum^m_{i=1}(-1)^iX_iX_{m-i+1}$.

\smallskip

Let us fix $P$. By rescaling we may assume that $A\bar A$ is unitary. Write $\F$ for $\F_P$. Denote by $H_{k,l}=H_k$ the underlying space of the representation $U_k\otimes d^l$, as in the previous section, and~put
$$
\F[1]=\bigoplus^\infty_{n=0}H_{n,1}.
$$
We are going to define an $\GA$-equivariant quasi-homomorphism $(\phi_+,\phi_-)\colon\TT_P\to\K(\F\oplus\F[1])$.

Using that $\F[1]=\F$ as Hilbert spaces, we define $\phi_+\colon \TT_P\to B(\F\oplus\F[1])$ by
$$
\phi_+(S)=(S,S)\quad (S\in\TT_P).
$$
The representations $U_P$ and $U_P\otimes d$ define the same right actions on the bounded operators on~$\F$, so $\phi_+$ is indeed $\GA$-equivariant.

\smallskip

The homomorphism $\phi_-$ will be defined using an identification of $\F\otimes H_{1,0}$ with a codimension one subspace of $\F\oplus\F[1]$. We need some preparation to describe this identification.

Recall that~$e$ denotes the projection onto $\C P\subset\C^m\otimes\C^m$, and $f_n$ the projection $(\C^m)^{\otimes n}\to H_n=H_{n,0}$, so
\begin{equation*}
\label{eq:Jones-Wenzl-projections}
f_0 = 1_\C, \quad f_1 = 1_{\C^m}, \quad f_n = 1 - \bigvee_{i=0}^{n-2} 1^{\otimes i}\otimes e \otimes 1^{\otimes (n-i-2)}\ \ \text{for}\ \ n\ge2.
\end{equation*}
These are the Jones--Wenzl projections in Temperley--Lieb algebras and a key known fact, which we already relied on in~\cite{HN21}, is that they satisfy the recurrence relation
\begin{equation*}\label{eq:Wenzl-recurrence}
f_{n+1} = f_n \otimes 1 -  [2]_q\phi(n)(f_{n}\otimes1)(1^{\otimes(n-1)}\otimes e)(f_{n}\otimes1),
\end{equation*}
where $0<q\le1$ is such that $q+q^{-1}=\Tr(A^*A)$ and $\phi(n)$ is given by~\eqref{eq:phi(n)}, so $\phi(n)=\dfrac{[n]_q}{[n+1]_q}$. Let $v\colon \C\to \C^m\otimes\C^m$ be the isometry $1\mapsto [2]_q^{-1/2}P$, so that $vv^*=e$. Define an $\GA$-equivariant partial isometry
\begin{equation}\label{eq:Wenzl-isometry}
w_{n}:=([2]_q\phi(n+1))^{1/2}(f_{n+1}\otimes1)(1^{\otimes n}\otimes v)\colon H_{n,1}\to H_{n+1,0}\otimes H_{1,0}.
\end{equation}
This is known to be an isometry, see, e.g., the proof of~\cite[Lemma~2.5.8]{NT13}. Denote by $v_n$ ($n\ge1$) the embedding $H_{n,0}\to H_{n-1,0}\otimes H_{1,0}$. Therefore $v_1=1$ and, by the fusion rules~\eqref{eq:fusion-rules}, we have $w_{n-1}w_{n-1}^*+v_{n+1}v_{n+1}^*=1$ for all $n\ge1$.

Write $\mathcal{F}_+ = \bigoplus_{n\geq 1} H_{n,0}$. Then we obtain $\GA$-equivariant isometries
\[\Theta = \bigoplus^\infty_{n=1} v_{n} \colon \mathcal{F}_+ \to \mathcal{F}\otimes H_{1,0}, \quad \Phi = \bigoplus^\infty_{n=0} w_n \colon \mathcal{F}[1] \to \mathcal{F}\otimes H_{1,0}\]
such that their ranges are orthogonal and their sum is $\F\otimes H_{1,0}$. We extend $\Theta$ to a partial isometry $\F\to \F\otimes H_{1,0}$ by letting $\Theta=0$ on $H_{0,0}$. Then $\Theta+\Phi$ is an equivariant coisometry $\F\oplus\F[1]\to \F\otimes H_{1,0}$ with one-dimensional kernel $H_{0,0}\oplus0$. Define $\phi_-\colon\TT_P\to B(\F\oplus\F[1])$ by
$$
\phi_-(S):=(\Theta+\Phi)^*(S\otimes1)(\Theta+\Phi)\qquad (S\in\TT_P),
$$
or in matrix form, by
$$
\phi_-(S):=\begin{pmatrix}
            \Theta^*(S\otimes1)\Theta &  \Theta^*(S\otimes1)\Phi\\
            \Phi^*(S\otimes1)\Theta & \Phi^*(S\otimes1)\Phi
          \end{pmatrix}.
$$

\begin{lemma}[cf.~{\cite[Proposition~5.4]{Arici-Kaad}}]
For all $i=1,\dots,m$, the following holds:
\begin{itemize}
\item[(i)] $(S_i^* \otimes 1)\Theta = \Theta S_i^*$;
\item[(ii)] the operator $(S_i^*\otimes 1)\Phi - \Phi S_i^*$ is compact.
\end{itemize}
\end{lemma}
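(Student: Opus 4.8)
\emph{Part (i)} is a matter of unwinding definitions. Viewing $H_n$ inside $H_1\otimes H_{n-1}$ via the subproduct inclusion, one has, for $\zeta\in H_n$, that $S_i^*\zeta=(\xi_i^*\otimes\id^{\otimes(n-1)})\zeta$ is the contraction of the first tensor leg against $\xi_i$; as $\zeta\in H_1\otimes H_{n-1}$ this vector already lies in $H_{n-1}$, no reprojection being needed. On the other hand $\Theta$ acts on $H_n$ ($n\ge1$) as the embedding $v_n\colon H_n\hookrightarrow H_{n-1}\otimes H_1$, i.e.\ as the identity once both spaces are regarded inside $H_1^{\otimes n}$. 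Hence for $\zeta\in H_n$ with $n\ge2$ both $(S_i^*\otimes1)\Theta\zeta$ and $\Theta S_i^*\zeta$ equal the contraction of the first leg of $\zeta$ against $\xi_i$, computed in $H_1^{\otimes n}$, and so they coincide. The cases $n=0,1$ are trivial: $S_i^*$ and $\Theta$ both vanish on $H_0$, and on $H_1$ we have $\Theta S_i^*=0$ (since $S_i^*H_1\subseteq H_0$) while $(S_i^*\otimes1)\Theta$ takes values in $(S_i^*\otimes1)(H_0\otimes H_1)=\{0\}$.

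For \emph{part (ii)}, put $D:=(S_i^*\otimes1)\Phi-\Phi S_i^*\in B(\F[1],\F\otimes H_{1,0})$. The first step is to observe that $D$ is block-diagonal with finite-rank blocks: $\Phi$ maps $H_{n,1}$ into $H_{n+1,0}\otimes H_{1,0}$ through $w_n$, so $(S_i^*\otimes1)\Phi$ maps $H_{n,1}$ into $H_{n,0}\otimes H_{1,0}$, while $\Phi S_i^*$ restricted to $H_{n,1}$ equals $w_{n-1}\circ(S_i^*|_{H_{n,1}})$ and hence also has values in $H_{n,0}\otimes H_{1,0}$. Therefore $D=\bigoplus_nD_n$ with $D_n\colon H_{n,1}\to H_{n,0}\otimes H_{1,0}$ of finite rank, and $D$ is compact precisely when $\|D_n\|\to0$.

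To bound $\|D_n\|$ I would compute it explicitly. From~\eqref{eq:Wenzl-isometry} together with $v(1)=[2]_q^{-1/2}P$ one obtains $w_n\mu=\phi(n+1)^{1/2}(f_{n+1}\otimes1)(\mu\otimes P)$ for $\mu\in H_n$, and likewise $w_{n-1}\nu=\phi(n)^{1/2}(f_n\otimes1)(\nu\otimes P)$. Using the description of $S_i^*$ from part~(i), this gives, for $\mu\in H_n$,
\[
D_n\mu=\phi(n+1)^{1/2}\bigl((\xi_i^*)_1\otimes\id\bigr)(f_{n+1}\otimes1)(\mu\otimes P)-\phi(n)^{1/2}(f_n\otimes1)\bigl((S_i^*\mu)\otimes P\bigr),
\]
where $(\xi_i^*)_1$ is contraction of the first tensor leg against $\xi_i$. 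Expanding $f_{n+1}$ (and, one level down, $f_n$) by the Jones--Wenzl recurrence $f_{n+1}=f_n\otimes1-[2]_q\phi(n)(f_n\otimes1)(1^{\otimes(n-1)}\otimes e)(f_n\otimes1)$, and using that $f_n$ acts trivially on $\mu\in H_n$, the ``main'' pieces of the two summands collapse and cancel up to the scalar $\phi(n+1)^{1/2}-\phi(n)^{1/2}$, leaving only $(\phi(n+1)^{1/2}-\phi(n)^{1/2})(S_i^*\mu)\otimes P$; since $\phi(k)=[k]_q/[k+1]_q\to q$ this scalar tends to $0$. There remain two ``correction'' pieces, each carrying a factor $\phi(k)$ and a cap projection $e$; after commuting the leg-$1$ contraction through the surviving Jones--Wenzl projections and using the relations $e f_k=0$, they combine into a term proportional to $\phi(n)^{1/2}\phi(n-1)-\phi(n+1)^{1/2}\phi(n)\to0$ (times an operator of uniformly bounded norm) together with an operator of the form $\bigl((\xi_i^*)_1f_n-f_{n-1}(\xi_i^*)_1\bigr)\otimes\id^{\otimes2}$ applied to the $P$-capped vector $(1^{\otimes(n-1)}\otimes e\otimes1)(\mu\otimes P)$. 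Controlling this last contribution is the crux: the operator $(\xi_i^*)_1f_n-f_{n-1}(\xi_i^*)_1=-f_{n-1}(\xi_i^*)_1(1-f_n)$ does \emph{not} go to $0$ in norm (its norm is bounded below in terms of the bottom of the spectrum of $S_i^*S_i$ on $H_{n-1}$), so one cannot argue by a crude estimate; instead one must exploit that it is applied only to vectors already carrying a $P$-cap in their last two legs, so that only small matrix coefficients survive. Granting this estimate, $\|D_n\|\to0$, whence $D$ is compact, which is exactly~(ii). All the remaining ingredients are routine bookkeeping with tensor legs and the recurrence for the $f_n$.
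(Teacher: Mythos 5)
Part (i) of your proposal is correct and is in substance the paper's argument: once one notes that $S_i^*$ is the restriction to $\F$ of the free annihilation operator $T_i^*$ on $\bigoplus_n H_1^{\otimes n}$ and that $\Theta$ acts on $H_{n,0}$ as the inclusion $H_{n,0}\subset H_{n-1,0}\otimes H_{1,0}$ inside $H_1^{\otimes n}$, the intertwining relation is immediate, and your treatment of the degenerate levels $n=0,1$ is fine.

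In part (ii), however, there is a genuine gap, and you flag it yourself. Your block decomposition $D=\bigoplus_n D_n$ and the reduction to $\|D_n\|\to0$ are correct, and your Jones--Wenzl expansion of $D_n$ is structurally right; but after the bookkeeping everything hinges on estimating the term involving $f_{n-1}(\xi_i^*)_1(1-f_n)$ (tensored with the identity on the last two legs) applied to the capped vector $(1^{\otimes(n-1)}\otimes e\otimes1)(\mu\otimes P)$. As you observe, this operator is \emph{not} small in norm, so the conclusion requires a nontrivial estimate exploiting the cap, which you only ``grant''; without it compactness is not established, and supplying it amounts to exactly the kind of lengthy Temperley--Lieb computation (partial-trace identities for Jones--Wenzl projections, as in Arici--Kaad) that the paper's argument is designed to avoid. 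The paper proceeds differently: using $S_i^*=T_i^*|_{\F}$ and the fact that $T_i^*$ commutes with operators acting on legs $\ge2$, one has $(S_i^*\otimes1)\Phi\zeta-\Phi S_i^*\zeta=T_i^*\bigl(w_n-(1\otimes w_{n-1})\bigr)\zeta$ for $\zeta\in H_{n,1}$, so it suffices to show $\|w_n-(1\otimes w_{n-1})f_n\|\to0$. This norm is computed exactly: both $w_n$ and $(1\otimes w_{n-1})f_n$ are isometries on $f_nH_1^{\otimes n}$, and since $f_{n+1}\le 1\otimes f_n$ their overlap is the scalar multiple $w_n^*(1\otimes w_{n-1})f_n=\phi(n)^{1/2}\phi(n+1)^{-1/2}f_n$, whence $\|w_n-(1\otimes w_{n-1})f_n\|^2=2-2\,\phi(n)^{1/2}\phi(n+1)^{-1/2}\to0$. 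You should either supply the missing cap estimate in full or switch to this shorter route, which bypasses the delicate cancellation entirely.
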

\begin{proof}
Let $(\xi_i)_i$ be the standard basis in $H_1=\C^m$. For every $i$, denote by $T_i$ the creation operator of tensoring on the left by $\xi_i$ on the full Fock space $\bigoplus_{n\ge0}H_1^{\otimes n}$. Then, by the definition of subproduct systems, the subspace $\F$ is invariant under $T_i^*$ and $S_i^*=T_i^*|_\F$. Part (i) follows directly from this.

To prove part (ii), let $\zeta \in H_{n,1}=H_{n}=f_{n}H_1^{\otimes n}$. Using again that $S_i^*=T_i^*|_\F$, we get
\[(S_i^*\otimes 1)\Phi\zeta - \Phi S_i^*\zeta = T_i^*(w_n-1\otimes w_{n-1})\zeta.\]
It is therefore enough to check that the norms of the operators
$$
w_n-(1\otimes w_{n-1})f_n\colon H_1^{\otimes n}\to H_1^{\otimes(n+2)}
$$
converge to~$0$ as~$n$ tends to infinity, where we view $w_n$ as a partial isometry $H_1^{\otimes n}\to H_1^{\otimes(n+2)}$ with the source projection $f_{n}$. Note that expression~\eqref{eq:Wenzl-isometry} for $w_n$ does not change, since $f_{n+1}\le f_{n}\otimes1$.

As $f_n\le1\otimes f_{n-1}$, we have
\begin{multline*}
(w_n-(1\otimes w_{n-1})f_n)^*(w_n-(1\otimes w_{n-1})f_n) \\
 = f_n-w_n^*(1\otimes w_{n-1})f_n-f_n(1\otimes w_{n-1}^*)w_n+f_n.
\end{multline*}
From \eqref{eq:Wenzl-isometry}, using again that $f_{n+1}\le1\otimes f_n$, we get
$$
w_n^*(1\otimes w_{n-1})=\dfrac{\phi(n)^{1/2}}{\phi(n+1)^{1/2}}w_n^*w_n=\dfrac{\phi(n)^{1/2}}{\phi(n+1)^{1/2}}f_n.
$$
Therefore
$$
\|w_n-(1\otimes w_{n-1})f_n\|=\Big(2-2\dfrac{\phi(n)^{1/2}}{\phi(n+1)^{1/2}}\Big)^{1/2}
=\sqrt{2}\big(1-(1 - [n+1]_q^{-2})^{1/2}\big)^{1/2}.
$$
This expression obviously converges to $0$ as $n\to\infty$.
\end{proof}

From this we immediately get the following.

\begin{corollary}
For every $S\in\TT_P$, the operator $\phi_+(S)-\phi_-(S)$ is compact.
\end{corollary}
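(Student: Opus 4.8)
\emph{Plan.}
The idea is to reduce the statement to the previous lemma once one notices that $\phi_+$ and $\phi_-$ are both $*$-homomorphisms. For $\phi_+$ this is clear. For $\phi_-$, observe that $W:=\Theta+\Phi$ is a coisometry, so $WW^*=1$ on $\F\otimes H_{1,0}$, whence $\phi_-(ST)=W^*(S\otimes1)WW^*(T\otimes1)W=\phi_-(S)\phi_-(T)$ for $S,T\in\TT_P$, while $\phi_-(S^*)=\phi_-(S)^*$ is immediate. Composing $\phi_\pm$ with the quotient map $B(\F\oplus\F[1])\to B(\F\oplus\F[1])/\K(\F\oplus\F[1])$ thus yields $*$-homomorphisms $\bar\phi_\pm$, and the coincidence set $\{S\in\TT_P:\bar\phi_+(S)=\bar\phi_-(S)\}$ is a C$^*$-subalgebra of $\TT_P$. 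Since $\TT_P$ is generated by $1,S_1,\dots,S_m$ and $\phi_+(1)-\phi_-(1)=1-W^*W$ has rank one, it will suffice to check that $\phi_+(S_i)-\phi_-(S_i)$ is compact for each $i$, and by taking adjoints it is enough to treat $S_i^*$.

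For the latter I would use parts (i) and (ii) of the lemma together. Writing an element of $\F\oplus\F[1]$ as $a\oplus b$, so that $W(a\oplus b)=\Theta a+\Phi b$, the lemma gives $(S_i^*\otimes1)W(a\oplus b)=\Theta S_i^*a+\Phi S_i^*b+K_ib$, where $K_i:=(S_i^*\otimes1)\Phi-\Phi S_i^*$ is compact; in other words $(S_i^*\otimes1)W=W\phi_+(S_i^*)+L_i$ with $L_i(a\oplus b)=K_ib$ compact. Composing on the left with $W^*$ gives $\phi_-(S_i^*)=W^*W\,\phi_+(S_i^*)+W^*L_i$. Since $W^*W$ is the projection onto $(\ker W)^\perp$ and $\ker W=H_{0,0}\oplus0$ is one-dimensional, $1-W^*W$ has rank one, so $\phi_-(S_i^*)-\phi_+(S_i^*)=-(1-W^*W)\phi_+(S_i^*)+W^*L_i$ is compact. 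Taking adjoints completes the argument.

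I do not anticipate a genuine obstacle here: all the real content sits in the preceding lemma, and the rest is bookkeeping. The single point deserving a moment's care is the codimension-one discrepancy between $\F\oplus\F[1]$ and $\F\otimes H_{1,0}$, which is exactly what produces the correction term $(1-W^*W)\phi_+(S_i^*)$; as that term has rank one it is harmless, and everything else is a formal consequence of $\phi_\pm$ being $*$-homomorphisms.
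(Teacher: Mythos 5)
Your proposal is correct and follows essentially the same route as the paper, which deduces the corollary directly from the lemma via the fact that $\phi_+$ and $\phi_-$ are $*$-homomorphisms agreeing modulo compacts on the generators $1, S_1,\dots,S_m$; the paper simply leaves this bookkeeping implicit ("From this we immediately get..."), and your write-up, including the rank-one correction $1-W^*W$ coming from the kernel $H_{0,0}\oplus 0$, fills it in accurately.
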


Thus the $*$-homomorphisms $\phi_+, \phi_- \colon \TP \to B(\mathcal{F}\oplus \mathcal{F}[1])$ indeed define a quasi-homomorphism $\TT_P\to\K(\F\oplus\F[1])$ and therefore an element $[(\phi_+,\phi_-)]\in KK^{\GA}(\TT_P,\C)$.

\begin{theorem} \label{thm:inverse}
For every Temperley--Lieb polynomial $P$, the class $[(\phi_+,\phi_-)]\in KK^{\GA}(\TT_P,\C)$ is the inverse of the embedding $i\colon\C\to\TT_P$ in $KK^{\GA}$.
\end{theorem}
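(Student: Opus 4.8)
The plan is to lean on Theorem~\ref{thm:KK-Toeplitz}, which already tells us that the embedding $i\colon\C\to\TT_P$ is invertible in $KK^{\GA}$. Since a one-sided inverse of an invertible morphism in any category coincides with its (unique) two-sided inverse, it suffices to check that $[(\phi_+,\phi_-)]$ is a one-sided inverse of $i$, that is, that the Kasparov product
$$
i\otimes_{\TT_P}[(\phi_+,\phi_-)]\in KK^{\GA}(\C,\C)
$$
is the unit. Concretely: if $j\in KK^{\GA}(\TT_P,\C)$ denotes the two-sided inverse of $i$, then once $i\otimes_{\TT_P}[(\phi_+,\phi_-)]=1$ we get $[(\phi_+,\phi_-)]=(j\otimes_\C i)\otimes_{\TT_P}[(\phi_+,\phi_-)]=j\otimes_\C\big(i\otimes_{\TT_P}[(\phi_+,\phi_-)]\big)=j\otimes_\C 1=j$. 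In particular I would not try to compute the harder product $[(\phi_+,\phi_-)]\otimes_\C i\in KK^{\GA}(\TT_P,\TT_P)$.

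To evaluate $i\otimes_{\TT_P}[(\phi_+,\phi_-)]$ I would use that composing a $KK$-class with the class of a unital $*$-homomorphism is realized on cycles by restricting the representing quasi-homomorphism along that homomorphism. Thus the product in question is the class of the $\GA$-equivariant quasi-homomorphism $\C\to\K(\F\oplus\F[1])$, $\lambda\mapsto(\lambda\,\phi_+(1),\lambda\,\phi_-(1))$. By construction $\phi_+(1)=1_{\F\oplus\F[1]}$, whereas $\phi_-(1)=(\Theta+\Phi)^*(\Theta+\Phi)$ is the orthogonal projection onto $(\ker(\Theta+\Phi))^\perp$, since $\Theta+\Phi$ is a coisometry. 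As $\ker(\Theta+\Phi)=H_{0,0}\oplus 0$ is one-dimensional, the operator $\phi_+(1)-\phi_-(1)$ is the rank-one projection onto it, and the $\GA$-action restricted to $H_{0,0}=H_0$ is the trivial representation $U_0$.

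It follows that $i\otimes_{\TT_P}[(\phi_+,\phi_-)]$ is represented by the virtual $\GA$-module $\phi_+(1)(\F\oplus\F[1])\ominus\phi_-(1)(\F\oplus\F[1])$, which is just the trivial one-dimensional representation $H_{0,0}$, and hence equals $1\in KK^{\GA}(\C,\C)$. Combined with the first paragraph this gives $[(\phi_+,\phi_-)]=i^{-1}$ in $KK^{\GA}$.

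I do not expect a genuine obstacle here: the delicate ingredients — the existence of an inverse supplied by Theorem~\ref{thm:KK-Toeplitz}, and the compactness of $\phi_+(S)-\phi_-(S)$ from the preceding corollary, which makes $[(\phi_+,\phi_-)]$ a well-defined $KK^{\GA}$-class in the first place — are already in hand. The only two points worth spelling out with a little care are that the composite with $i$ is computed by pulling the cycle back along the unit inclusion, and that the leftover one-dimensional space $H_{0,0}\oplus 0$ carries the \emph{trivial} $\GA$-representation, so that the resulting point class is the unit of $KK^{\GA}(\C,\C)$ rather than some other element of the representation ring of $\GA$.
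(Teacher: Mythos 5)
Your proposal is correct and follows essentially the same route as the paper: invoke Theorem~\ref{thm:KK-Toeplitz} so that a one-sided inverse suffices, compute $[i]\hat\otimes_{\TT_P}[(\phi_+,\phi_-)]=[(\phi_+\circ i,\phi_-\circ i)]$, and observe that $\phi_+(1)-\phi_-(1)$ is the projection onto $H_{0,0}\oplus 0$, which carries the trivial representation of $\GA$ and hence gives the unit of $KK^{\GA}(\C,\C)$.
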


\begin{proof}
Since we already know that $[i]$ is invertible by Theorem~\ref{thm:KK-Toeplitz}, it suffices to show that
$$
[i] \hat{\otimes}_{\TT_P} [(\phi_-, \phi_+)]\in KK^{\GA}(\C,\C)
$$
is the identity. We have $[i] \hat{\otimes}_{\TT_P} [(\phi_+, \phi_-)] = [(\phi_+ \circ i, \phi_- \circ i)]$, and moreover
\[ \phi_+(1) - \phi_-(1) = (e_0, 0), \]
where $e_0$ is the projection $\F\to H_{0,0}$. Since $H_{0,0}$ is the underlying space of the trivial representation of $\GA$, this is exactly what we want.
\end{proof}

Now we stop worrying about equivariance and compute the $K$-theory of $\OP$. The argument is essentially identical to that in \cite{Arici-Kaad} for the polynomials $\sum^m_{i=1}(-1)^iX_iX_{m-i+1}$.

\begin{corollary}\label{cor:K-theory-O}
For every Temperley--Lieb polynomial $P$ in $m$ variables, we have
$$
K_0(\OO_P)\cong\Z/(m-2)\Z,\qquad K_1(\OO_P)\cong\begin{cases} \Z,&\text{if } m=2,\\ 0,&\text{if }m\ge3.\end{cases}
$$
\end{corollary}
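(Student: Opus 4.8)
The plan is to feed the defining extension $0\to\K(\F_P)\to\TT_P\to\OO_P\to0$ into the six-term exact sequence in $K$-theory and to use Theorems~\ref{thm:KK-Toeplitz} and~\ref{thm:inverse} to control the $K$-groups of $\TT_P$ and the relevant connecting map. Write $\iota\colon\K(\F_P)\hookrightarrow\TT_P$ for the inclusion; the whole computation will come down to identifying the homomorphism $\iota_*\colon K_0(\K(\F_P))\to K_0(\TT_P)$.

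First I would record the input. Since $\F_P$ is infinite dimensional, $K_0(\K(\F_P))\cong\Z$, generated by the class $[e_0]$ of the rank-one projection onto $H_0=H_{0,0}$, and $K_1(\K(\F_P))=0$. By Theorem~\ref{thm:KK-Toeplitz} the embedding $\C\to\TT_P$ is a $KK$-equivalence, so $K_0(\TT_P)\cong\Z$, generated by $[1_{\TT_P}]$, and $K_1(\TT_P)=0$. Plugging these into the six-term sequence, exactness forces it to collapse to
\[
0\longrightarrow K_1(\OO_P)\longrightarrow K_0(\K(\F_P))\xrightarrow{\ \iota_*\ }K_0(\TT_P)\longrightarrow K_0(\OO_P)\longrightarrow 0,
\]
so that $K_0(\OO_P)\cong\operatorname{coker}\iota_*$ and $K_1(\OO_P)\cong\ker\iota_*$.

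The one real step is to evaluate $\iota_*[e_0]=[e_0]\in K_0(\TT_P)$. By Theorem~\ref{thm:inverse}, the class $[(\phi_+,\phi_-)]\in KK^{\GA}(\TT_P,\C)$ is, after forgetting equivariance, a two-sided inverse of $[i]$ in $KK$, so the Kasparov product with it is an isomorphism $K_0(\TT_P)\xrightarrow{\ \cong\ }K_0(\C)=K_0(\K(\F\oplus\F[1]))\cong\Z$ sending a projection class $[p]$ to $[\phi_+(p)]-[\phi_-(p)]$, i.e.\ to $\operatorname{rank}\phi_+(p)-\operatorname{rank}\phi_-(p)$. On $[1_{\TT_P}]$ this gives $[\phi_+(1)]-[\phi_-(1)]=[(e_0,0)]=1$, so the normalisation is the expected one; on $[e_0]$ it gives $[\phi_+(e_0)]-[\phi_-(e_0)]$. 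Here $\phi_+(e_0)=(e_0,e_0)$ has rank $2$, while $\phi_-(e_0)=(\Theta+\Phi)^*(e_0\otimes1)(\Theta+\Phi)$ is a projection, because $\Theta+\Phi$ is a coisometry onto $\F\otimes H_{1,0}$, and its trace is $\Tr(e_0\otimes1_{H_{1,0}})=(\dim H_0)(\dim H_{1,0})=m$, so it has rank $m$. Hence $\iota_*[e_0]=(2-m)[1_{\TT_P}]$, i.e.\ $\iota_*$ is multiplication by $m-2$ up to sign.

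Finally, $K_0(\OO_P)\cong\Z/(m-2)\Z$ and $K_1(\OO_P)\cong\ker\bigl(\,\cdot(m-2)\colon\Z\to\Z\bigr)$, which is $\Z$ for $m=2$ and $0$ for $m\ge3$, as claimed (consistently, for $m=2$ one recovers $K_*(C(SU_{\bar q}(2)))=(\Z,\Z)$). I do not expect a genuine obstacle: granted the explicit inverse of Theorem~\ref{thm:inverse}, the whole proof is the six-term sequence together with the rank computation for $\phi_-(e_0)$, and the latter is immediate from the coisometry property of $\Theta+\Phi$ already established. The only point to watch is the harmless sign ambiguity in $\iota_*$, since $\Z/(m-2)\Z\cong\Z/(2-m)\Z$; going through $[(\phi_+,\phi_-)]$ also avoids the more laborious task of identifying $[e_0]=(2-m)[1_{\TT_P}]$ directly inside $K_0(\TT_P)$ from the relations of Theorem~\ref{thm:relations}.
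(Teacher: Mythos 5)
Your proposal is correct and follows essentially the same route as the paper: feed the extension $0\to\K(\F_P)\to\TT_P\to\OO_P\to0$ into the six-term sequence, use Theorem~\ref{thm:KK-Toeplitz} to identify $K_*(\TT_P)$, and compute the inclusion $\K(\F_P)\to\TT_P$ on $K_0$ by composing with the explicit inverse $[(\phi_+,\phi_-)]$ of Theorem~\ref{thm:inverse}, where the rank count $2$ versus $m$ (for $\phi_+(e_0)$ and $\phi_-(e_0)$, the latter a projection since $\Theta+\Phi$ is a coisometry) gives multiplication by $2-m$. The paper phrases this by restricting $(\phi_+,\phi_-)$ to $\K(\F_P)$ to get genuine $*$-homomorphisms, but that is only a cosmetic difference from your computation.
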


\bp
We apply the six-term exact sequence in $K$-theory to the short exact sequence
$$
0 \to \K(\mathcal{F}) \to \TP \to \OP \to 0.
$$
We identify $K_0(\K(\F))$ with $\Z$ and, using the $KK$-equivalence $\C \cong_{KK} \TP$, we also identify $K_0(\TT_P)$ with $\Z$. Then it suffices to check that the homomorphism $\Z\to\Z$ of the $K_0$-groups induced by the embedding $\K(\F)\to\TT_P$ is the multiplication by $2-m$. The composition of this embedding with $[(\phi_+,\phi_-)]\in KK(\TT_P,\C)$ is represented by the quasi-homomorphism $(\psi_+,\psi_-)=(\phi_+,\phi_-)|_{\K(\F)}\colon \K(\F)\to\K(\F\oplus\F[1])$. Clearly, $\psi_\pm$ are well-defined $*$-homomorphisms $\K(\F)\to\K(\F\oplus\F[1])$, hence
$$
[(\psi_+,\psi_-)]=[\psi_+]-[\psi_-]\in KK(\K(\F),\C).
$$
By construction, for any rank one projection $p\in\K(\F)$, the projections $\psi_+(p)$ and $\psi_-(p)$ have ranks $2$ and $m$, resp. This gives us what we need.
\ep

\begin{remark}
Recall that $\GA$ is monoidally equivalent to $U_q(2)$ for a uniquely defined $0<q\le1$ and then, by Theorem~\ref{thm:CP-crossed}, we have $B(U_q(2),\GA)\cong\OO_P\rtimes_{\bar\beta}\Z$. It is easy to see that the automorphism $\beta$ of $\TT_P$ is homotopic to the identity. It follows that $K_i(B(U_q(2),\GA))\cong K_0(\OO_P)\oplus K_1(\OO_P)$, $i=0,1$. Therefore the above corollary also gives us a computation of the $K$-theory of the C$^*$-bi-Galois objects $B(U_q(2),\GA)$. In principle, the Baum--Connes conjecture for the dual of $U_q(2)$ alone should be enough to compute this $K$-theory and then the $K$-theory of~$\OO_P$. But it seems the approach based on the construction of the quasi-homomorphism $(\phi_+,\phi_-)$ leads faster to the result.
\end{remark}

\bigskip

\appendix

\section{The Baum--Connes conjecture for the dual of \texorpdfstring{$U_q(2)$}{Uq2}}\label{app:A}

Fix a number $q \in (0,1)$. The quantum group $U_q(2)$ coincides with $\GA$ for the polynomial $P=q^{-1/2}X_1X_2-q^{1/2}X_2X_1$. Therefore, by definition, the $*$-algebra $\C[U_q(2)]$ is generated by the matrix coefficients of a unitary $2\times 2$ matrix $V$ and a unitary element $d$ such that
$$
V\begin{pmatrix}
   0 & q^{-1/2} \\
   -q^{1/2} & 0
 \end{pmatrix}V^t
=\begin{pmatrix}
   0 & q^{-1/2}d \\
   -q^{1/2}d & 0
 \end{pmatrix}.
$$
Let us introduce the following elements:
$$
\alpha=v_{22}^*,\quad \gamma=v_{21}d^*,\quad x=d^*.
$$
Then $\C[U_q(2)]$ can be equivalently described as a universal $*$-algebra with generators $\alpha,\gamma,x$ such that
$$
U=\begin{pmatrix}
   \alpha & -q\gamma^* \\
   \gamma & \alpha^*
 \end{pmatrix}
$$
is unitary and $x$ is unitary, commuting with $\alpha$ and $\gamma$. The comultiplication is determined by
\[\Delta(\alpha) = \alpha\otimes \alpha -q\gamma^*x\otimes \gamma, \quad \Delta(\gamma) = \alpha^*x\otimes \gamma + \gamma\otimes \alpha,\quad \Delta(x) = x \otimes x. \]
The defining representation $V$ of $U_q(2)$ is given by
\begin{equation}\label{eq:UvsV}
V=U\begin{pmatrix}
x^* & 0 \\
0 & 1
\end{pmatrix}
=\begin{pmatrix}
   \alpha x^* & -q\gamma^* \\
   \gamma x^* & \alpha^*
 \end{pmatrix}.
\end{equation}

We see that as a $*$-algebra $\C[U_q(2)]$ decomposes into a tensor product of the $*$-subalgebra generated by $x$ and the $*$-subalgebra generated by $\alpha$ and $\gamma$. The latter subalgebra can be identified with $\C[SU_q(2)]$ with its standard generators $\alpha$ and $\gamma$, and we denote by
$$
i\colon \C[SU_q(2)]\to\C[U_q(2)]
$$
the obvious embedding, which we will use only when we need a clear distinction between elements of these two algebras. Note that this embedding does not respect the coproducts. As a coalgebra, $\C[U_q(2)]$ is a smash coproduct of $\C[\T]$ and $\C[SU_q(2)]$, so that
$\C[U_q(2)]$ is a Hopf algebra with projection \cite{Radford}, see also \cite{KMRS}. Denote by
$$
\pi\colon\C[U_q(2)]\to\C[SU_q(2)]
$$
the projection (or restriction) map, determined by $\pi\circ i=\id$ and $\pi(x)=1$.

We have a maximal $2$-torus $T\subset U_q(2)$, which we decompose as $T=\T\times U(1)$,
$$
T=\{\begin{pmatrix}
  w^{-1}z & 0 \\
  0 & z^{-1}
\end{pmatrix}:\, w\in\T,\ z\in U(1)\}.
$$
Formally, the quantum subgroup
$\T\subset U_q(2)$ is defined by the surjective map
\[ \C[U_q(2)] \to \C[\mathbb{T}] = \C[w,w^{-1}], \quad x \mapsto w, \quad \alpha \mapsto 1, \quad \gamma \mapsto 0 .\]
The quantum subgroup $U(1)$ is the maximal torus of $SU_q(2)$ determined by the map
\[\C[SU_q(2)] \to \C[U(1)] = \C[z,z^{-1}], \quad  \alpha \mapsto z, \quad \gamma \mapsto 0. \]
We identify $\C[T]$ with $\C[z,z^{-1},w,w^{-1}]$. To summarize, we have the following diagram of quantum subgroups:
\begin{center}
\begin{tikzcd}[remember picture]
    U(1) & SU_q(2) & & \C[z,z^{-1}] & \arrow{l}[swap]{\pi_{U(1)}} \C[SU_q(2)]\\
    T  & U_q(2) & & \C[z,z^{-1},w,w^{-1}] \arrow{u} & \C[U_q(2)] \arrow{l}{\pi_T} \arrow{u}[swap]{\pi}\\
\end{tikzcd}
\begin{tikzpicture}[overlay,remember picture]
\path (\tikzcdmatrixname-1-1) to node[midway]{$\subset$}
(\tikzcdmatrixname-1-2);
\path (\tikzcdmatrixname-1-1) to node[midway,sloped]{$\subset$}
(\tikzcdmatrixname-2-1);
\path (\tikzcdmatrixname-1-2) to node[midway,sloped]{$\subset$}
(\tikzcdmatrixname-2-2);
\path (\tikzcdmatrixname-2-1) to node[midway]{$\subset$}
(\tikzcdmatrixname-2-2);
\end{tikzpicture}
\end{center}
\vspace{-20pt}

Under our identification of $\C[SU_q(2)]$ as a $*$-subalgebra of $\C[U_q(2)]$, we have
\[ \C[SU_q(2)/U(1)] = \C[U_q(2)/T], \] giving two pictures of the Podl\'{e}s sphere $S^2_q$. Our main goal is to understand the $D(U_q(2))$-equivariant $KK$-theory of $C(U_q(2)/T)$. We will do this by adapting Voigt's arguments~\cite{Voigt-BC} for $D(SU_q(2))$. Note that in order to make the comparison with~\cite{Voigt-BC} more transparent we work with left actions.

\smallskip

For $l \in \frac{1}{2}\Z_+$, let $U_l$ be the spin $l$ irreducible representation of $SU_q(2)$ with the standard orthonormal basis $\{\xi^l_l, \xi^l_{l-1}, ..., \xi_{-l}^l\}$. This representation has a unique, up to a scalar factor, nonzero morphism into $U^{\otimes 2l}$, so its underlying space can be identified with a subspace of~$(\C^2)^{\otimes 2l}$. This subspace (which is exactly the subspace $f_{2l}(\C^2)^{\otimes 2l}$ used in the main text) is invariant under $U_q(2)$ and determines an irreducible subrepresentation~$V_l$ of~$V^{\otimes 2l}$. Write $u_{ij}^l$ and $v_{ij}^l$ for the matrix coefficients of $U_l$ and $V_l$ with respect to the basis $\{\xi^l_l, ..., \xi_{-l}^l\}$.

\begin{lemma} \label{lem:weights}
We have $v_{ij}^l=u_{ij}^lx^{-l-j}$ for all $l \in \frac{1}{2}\Z_+$ and $i,j = l, l-1,..., -l$.
\end{lemma}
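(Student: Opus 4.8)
The plan is to compute the action of the defining representation $V_l$ of $U_q(2)$ on the standard basis of the spin-$l$ representation, and to express everything in terms of the $SU_q(2)$ data $u^l_{ij}$ and the central unitary $x$. The point is that the subspace carrying $U_l$ inside $U^{\otimes 2l}$ is, by construction, the subspace $f_{2l}(\C^2)^{\otimes 2l}$, which also carries $V_l$ as a subrepresentation of $V^{\otimes 2l}$; so it suffices to understand how $V^{\otimes 2l}$ differs from $U^{\otimes 2l}$ on this common subspace and then restrict.

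First I would set up coordinates. From~\eqref{eq:UvsV} we have $V = U\,\mathrm{diag}(x^*,1)$, equivalently $V_{12} = U_{12}(\mathrm{diag}(x^*,1))_{13}$ as an element of $\Mat_2(\C)\otimes\C[U_q(2)]$. Iterating the coproduct, $V^{\otimes 2l}$ as a $2l$-fold tensor product of copies of $V$ becomes $U^{\otimes 2l}$ twisted by a diagonal correction built from $x$: concretely, on a weight vector $\xi_{i_1}\otimes\cdots\otimes\xi_{i_{2l}}$ of $(\C^2)^{\otimes 2l}$ (where each $i_k\in\{+\tfrac12,-\tfrac12\}$ in spin-$\tfrac12$ labels), the correction contributes a power of $x$ that counts the number of "$+\tfrac12$" slots — because $x$ is central and commutes with $\alpha,\gamma$, the correction factor simply pulls through. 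I would phrase this as: for a weight vector $\eta$ of weight $j$ (with respect to $U(1)\subset SU_q(2)$), the $V^{\otimes 2l}$-action on $\eta$ equals the $U^{\otimes 2l}$-action times $x$ raised to a power depending only on $j$ and $l$. The number of $+\tfrac12$ slots for a weight-$j$ vector in $(\C^2)^{\otimes 2l}$ is $l+j$, but the correction from $\mathrm{diag}(x^*,1)$ is $x^*$ precisely on the $+\tfrac12$ slots of the *second* tensor leg structure; tracking which leg gets the $x^*$ carefully gives the exponent $-l-j$ on the $j$-th basis vector $\xi^l_j$.

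Second, I would restrict to the invariant subspace $f_{2l}(\C^2)^{\otimes 2l}$. Since $x$ is central, the diagonal correction operator commutes with the Jones--Wenzl projection $f_{2l}$ (it acts as a scalar times $x$-power on each weight space, and $f_{2l}$ preserves weight spaces), so the restriction is immediate: on $\xi^l_j$, the representation $V_l$ acts as $U_l$ times $x^{-l-j}$ on the target side. Reading off matrix coefficients in the basis $\{\xi^l_l,\dots,\xi^l_{-l}\}$, this says exactly $v^l_{ij} = u^l_{ij}\,x^{-l-j}$. One should double-check the placement of the $x$-power (left vs. right of $u^l_{ij}$, and the sign of the exponent) against the conventions fixed in~\eqref{eq:UvsV} and in the definition of $\delta(\xi) = V(\xi\otimes 1)$; the sign $-l-j$ is forced by requiring that $\Delta(v^l_{ij}) = \sum_k v^l_{ik}\otimes v^l_{kj}$ be consistent with the smash-coproduct structure, which gives a useful cross-check.

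The main obstacle is purely bookkeeping: correctly tracking the $x$-correction through the $2l$-fold iterated coproduct of $V$ and verifying that the accumulated power of $x$ acting on $\xi^l_j$ is exactly $x^{-l-j}$ and not, say, $x^{-l+j}$ or $x^{2l}x^{-l-j}$. The cleanest way to pin down the constant is to evaluate on the highest weight vector $\xi^l_l$: there the weight is $j=l$, $V_l\xi^l_l$ should involve $x^{-2l}$ times $U_l\xi^l_l$-type coefficients, and this can be checked directly from $V = U\,\mathrm{diag}(x^*,1)$ on $\xi^{\otimes 2l}_{+1/2}$ without any projection subtlety, since the highest weight vector of $U_l$ is proportional to $\xi_{+1/2}^{\otimes 2l}$. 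Once the highest-weight case is verified, the general case follows by applying lowering operators (matrix coefficients of $U$ against $\gamma$) and using centrality of $x$, which shifts the exponent by exactly $1$ per step, matching $-l-j$.
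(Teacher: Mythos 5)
Your argument is correct, but it takes a different route from the paper. You compute directly: since $x$ is central, the factorization $V=U\,\mathrm{diag}(x^*,1)$ from~\eqref{eq:UvsV} gives $V^{\otimes 2l}=U^{\otimes 2l}\,D_1\cdots D_{2l}$ with $D=\mathrm{diag}(x^*,1)$, and the diagonal correction acts on any vector of weight $2j$ in $(\C^2)^{\otimes 2l}$ by $x^{-(l+j)}$ (there are $l+j$ slots carrying the weight $+\tfrac12$ vector, which is the one whose column is multiplied by $x^*$); since the Jones--Wenzl subspace is weight-graded and $\xi^l_j$ has weight $2j$, restricting yields $v^l_{ij}=u^l_{ij}x^{-l-j}$ in one stroke, with the exponent correctly depending only on the column index $j$ and your highest-weight check ($\xi^l_l\propto\xi_{1/2}^{\otimes 2l}$, correction $x^{-2l}$) pinning the sign. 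The paper instead argues by induction on $l$: it embeds $V_{l+1/2}$ into $V_l\otimes V_{1/2}$, uses the $U(1)$-weights to see that $v^{l+1/2}_{ij}$ is a combination of products $v^l_{i\pm1/2,\,j\mp1/2}v^{1/2}_{\mp1/2,\,\pm1/2}$ (and the two other terms), applies the inductive hypothesis to place all of these in $\C[SU_q(2)]\,x^{-l-j-1/2}$, and then uses the Hopf projection $\pi$ (identity on $\C[SU_q(2)]$, $x\mapsto1$) to identify the $\C[SU_q(2)]$-component as $u^{l+1/2}_{ij}$. Your approach buys a self-contained, non-inductive proof at the cost of the leg/weight bookkeeping in the $2l$-fold tensor product; the paper's induction trades that bookkeeping for the weight-multiplicity argument in $V_l\otimes V_{1/2}$ and the convenient use of $\pi$ to avoid computing any coefficients explicitly. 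Both are complete, and your sign cross-checks (highest weight vector, compatibility with $\Delta$) adequately close the only place where your version could slip.
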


\bp
The proof is by induction on $l$. By~\eqref{eq:UvsV} the claim is true for $l =1/2$, and it is obviously true for $l=0$. For the induction step we use that $V_{l+1/2}$ can be identified with a subrepresentation of $V_{l}\otimes V_{1/2}$. By definition, the vector $\xi^s_k$ has weight $2k$ with respect to $U(1)\subset SU_q(2)$, that is, $U_s|_{U(1)}(\xi^s_k\otimes1)=(\iota\otimes\pi_{U(1)})(U_s)(\xi^s_k\otimes1)=\xi^s_k\otimes z^{2k}$.  It follows that $\xi^{l+1/2}_{j}$ must be a linear combination of $\xi^l_{j-1/2}\otimes\xi^{1/2}_{1/2}$ and $\xi^l_{j+1/2}\otimes\xi^{1/2}_{-1/2}$. Hence $v^{l+1/2}_{ij}$ is a linear combination of
$v^l_{i-1/2,j-1/2}v^{1/2}_{1/2,1/2}$, $v^l_{i+1/2,j-1/2}v^{1/2}_{-1/2,1/2}$, $v^l_{i-1/2,j+1/2}v^{1/2}_{1/2,-1/2}$ and $v^l_{i+1/2,j+1/2}v^{1/2}_{-1/2,-1/2}$. By the inductive assumption all these elements lie in $\C[SU_q(2)]x^{-l-j-1/2}$. As $\pi(v^{l+1/2}_{ij})=u^{l+1/2}_{ij}$, we conclude that $v^{l+1/2}_{ij}=u^{l+1/2}_{ij}x^{-l-j-1/2}$.
\end{proof}

For $t \in [0,1]$, consider $\C[U_q(2)]$ as a left $\C[U_q(2)]$-module by
\[a \blacktriangleright_t b = a_{(1)}b ( f_t* S(a_{(2)}) ) \]
for $a, b \in \C[U_q(2)]$, where $f_t$ is the quasi-character on $\C[U_q(2)]$ given by
$$
f_t(\alpha)=q^t,\quad f_t(\alpha^*)=q^{-t},\quad f_t(\gamma)=f_t(\gamma^*)=0,\quad f_t(x)=1
$$
and $f_t*a=(\iota\otimes f_t)\Delta(a)$. We may in addition view $\C[U_q(2)]$ as a $\C[SU_q(2)]$-module by
\[g \triangleright_t b = i(g_{(1)})b \, i(f_t * S(g_{(2)}))\]
for $g \in \C[SU_q(2)]$ and $b \in \C[U_q(2)]$.
\begin{lemma} \label{lem:left-actions}
For any $a, b \in \C[U_q(2)]$, we have
\[a \blacktriangleright_t b = \pi(a) \blacktriangleright_t b = \pi(a) \triangleright_t b.\]
\end{lemma}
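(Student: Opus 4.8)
The plan is to exploit that the restriction map $\pi\colon\C[U_q(2)]\to\C[SU_q(2)]$ is the algebra surjection sending $x$ to $1$, so that its kernel is the two‑sided ideal $(x-1)\C[U_q(2)]$ generated by the central element $x-1$; in fact $\pi$ is even a homomorphism of Hopf $*$‑algebras, as one checks on the generators. Note also that, since $f_t(x)=1$, the quasi‑character $f_t$ factors as $f_t=\bar f_t\circ\pi$ for the quasi‑character $\bar f_t=f_t\circ i$ of $\C[SU_q(2)]$.

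The first equality is the easy one. Since $f_t$ is an algebra homomorphism, $c\mapsto f_t*c$ is a unital algebra endomorphism of $\C[U_q(2)]$, and from this one sees that $\blacktriangleright_t$ is indeed a left $\C[U_q(2)]$‑module structure; in particular it is additive in the first variable. A direct computation gives $x\blacktriangleright_t b=xb(f_t*S(x))=xb(f_t*x^{-1})=xbx^{-1}=b$, using $\Delta(x)=x\otimes x$, $S(x)=x^{-1}$, $f_t(x)=1$ and the centrality of $x$. Hence $x-1$, and therefore every element of the ideal $\ker\pi$, annihilates $\C[U_q(2)]$ under $\blacktriangleright_t$. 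As $a-i(\pi(a))\in\ker\pi$, this gives $a\blacktriangleright_t b=i(\pi(a))\blacktriangleright_t b$.

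It remains to prove $i(g)\blacktriangleright_t b=g\triangleright_t b$ for $g\in\C[SU_q(2)]$. Since $\triangleright_t$ is likewise a left module structure and $i,\pi$ are algebra maps, it is enough to treat $g$ in the generating set $\{\alpha,\alpha^*,\gamma,\gamma^*\}$, where it is a short explicit computation from $\Delta(\alpha)=\alpha\otimes\alpha-q\gamma^*x\otimes\gamma$, $\Delta(\gamma)=\alpha^*x\otimes\gamma+\gamma\otimes\alpha$, $S(\alpha)=\alpha^*$, $S(\gamma)=-q\gamma x^{-1}$ and the values of $f_t$. The mechanism is that the factors of $x$ introduced by the $U_q(2)$‑coproduct cancel against those introduced by the $U_q(2)$‑antipode, by centrality of $x$. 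Conceptually this is encoded by the grading identities
\begin{equation*}
\Delta\bigl(i(g)\bigr)=\sum i(g_{(1)})\,x^{|g_{(2)}|}\otimes i(g_{(2)}),\qquad S\bigl(i(h)\bigr)=i\bigl(S_{\mathrm{SU}}(h)\bigr)\,x^{-|h|},
\end{equation*}
valid for $g_{(2)}$ and $h$ homogeneous with respect to the $\Z$‑grading of $\C[SU_q(2)]$ with $|\alpha|=|\alpha^*|=0$, $|\gamma|=1$, $|\gamma^*|=-1$, where $S_{\mathrm{SU}}$ is the antipode of $\C[SU_q(2)]$ and $g_{(1)}\otimes g_{(2)}$ its coproduct. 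Combined with $f_t*(cx^k)=(f_t*c)x^k$ and $f_t*i(h')=i(\bar f_t*h')$ --- the latter because $\bar f_t$ kills every homogeneous element of nonzero degree --- substitution into $i(g)\blacktriangleright_t b=\sum i(g_{(1)})x^{|g_{(2)}|}b\,(f_t*S(i(g_{(2)})))$ makes the powers of $x$ cancel, leaving $\sum i(g_{(1)})b\,i(\bar f_t*S_{\mathrm{SU}}(g_{(2)}))=g\triangleright_t b$.

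The only delicate point in all of this is the bookkeeping of the auxiliary powers of $x$ coming from the $U_q(2)$‑coproduct and antipode; everything else --- that $\pi$ is a Hopf $*$‑map with $\ker\pi=(x-1)\C[U_q(2)]$, the factorization $f_t=\bar f_t\circ\pi$, and the two grading identities above --- is a routine verification on the generators $\alpha,\gamma,x$.
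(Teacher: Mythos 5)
Your proof is correct and is essentially the paper's own argument: your two grading identities for $\Delta(i(g))$ and $S(i(h))$ are precisely the smash-coproduct/antipode formulas the paper uses (with the $\Z$-grading replacing the $\T$-coaction $\delta$), and the cancellation of the powers of $x$ via centrality and $f_t(x)=1$ is the same mechanism. Your handling of the first equality through $\ker\pi=(x-1)\C[U_q(2)]$ and $x\blacktriangleright_t b=b$ is just a repackaging of the paper's observation that $f_t*S(x^n)=x^{-n}$ and $x$ is central, so no genuinely new route is involved.
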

\begin{proof}
Let us first note that $f_t * x^n = x^n$ for any $n\in\Z$. Hence the first equality is immediate, because $f_t*S(x^n) = x^{-n}$ and $x$ is central.

Let $i_{\T}\colon \C[\T] \to \C[U_q(2)]$ denote the inclusion given by $i_\T(w):=x$. Then, for any $g \in \C[SU_q(2)]$, we have
\[\Delta(i(g)) = i(g_{(1)})i_\T({g_{(2)}}^{(1)})\otimes i({g_{(2)}}^{(2)}), \quad S(i(g)) = i_\T(S(g^{(1)}))i(S(g^{(2)})), \]
where $\delta(g) = g^{(1)}\otimes g^{(2)}$ is the left action by $\T$ on $\C[SU_q(2)]$ given by $\delta:=(\pi_\T\otimes\iota)\Delta$, so $\delta(\alpha) = 1 \otimes \alpha$, $\delta(\gamma) = w \otimes \gamma$.
Now we obtain
\begin{align*}
g \blacktriangleright_t b &=  i(g_{(1)})i_\T({g_{(2)}}^{(1)}) b \, f_t*\big(i_\T(S({g_{(2)}}^{(2)}))i(S({g_{(2)}}^{(3)}))\big) \\
&= i(g_{(1)}) b \, (f_t* i(S({g_{(2)}}^{(3)})) \, i_\T({g_{(2)}}^{(1)}S({g_{(2)}}^{(2)})) \\
&= i(g_{(1)}) b \, ( f_t* i(S({g_{(2)}}^{(2)})) ) \varepsilon({g_{(2)}}^{(1)}) \\
&= i(g_{(1)}) b \,i(f_t* S(g_{(2)})) = g \triangleright_t b,
\end{align*}
where we used that $x$ is central and $f_t$ is trivial on $\C[\T]$ in the second equality, and that $f_t*i(a)=i(f_t*a)$ for all $a\in\C[SU_q(2)]$ in the fourth equality. 
\end{proof}

Recall that for a compact quantum group $G$, a vector space $Y$ with a left $\C[G]$-comodule structure $\delta\colon Y\to\C[G]\otimes Y$, $y\mapsto y^{(1)}\otimes y^{(2)}$, and a left $\C[G]$-module structure $\triangleright\colon \C[G]\otimes Y\to Y$ is called a Yetter--Drinfeld $G$-module, or a $D(G)$-module, if
$$
\delta(a\triangleright y)=a_{(1)}y^{(1)}S(a_{(3)})\otimes a_{(2)}\triangleright y^{(2)}\quad\text{for all}\quad a\in\C[G],\ y\in Y.
$$
The space $\C[U_q(2)]$ with the comodule structure $\Delta$ and module structure $\blacktriangleright_t$ is an example of a Yetter--Drinfeld $U_q(2)$-module.

\smallskip

Next, for $(m,n) \in \Z \oplus \Z=\hat T$, we consider the equivariant line bundle
$$
\mathcal{E}_{m,n} = U_q(2) \times_T \C_{m,n}
$$
over~$S^2_q$. To be precise, we look at the spaces of sections defined as follows:
\begin{align*}
 \Gamma(\mathcal{E}_{m,n}) &= \{ a \in \C[U_q(2)] \, : \, (\iota\otimes \pi_T)\Delta_{U_q(2)}(a) = a \otimes w^mz^n  \}\\
&=\operatorname{span}\{u^l_{i,n/2}x^m\, : \,l\in\frac{|n|}{2}+\Z_+,\ i=l,l-1,\dots,-l\}.
\end{align*}
Define also the $L^2$-sections by
\[\HH_{m,n} := \overline{\Gamma(\mathcal{E}_{m,n})}^{\|\cdot\|_2} \subset L^2(U_q(2)).\]
These subspaces are obviously invariant under the left regular representation of $U_q(2)$.

We have
\[\Gamma(\mathcal{E}_{0,n}) = \Gamma(\mathcal{E}_n)  \subset \C[SU_q(2)] \subset \C[U_q(2)],\]
where $\Gamma(\mathcal{E}_n)$ is the space considered in \cite[Section~4]{Voigt-BC}.
It follows from Lemma~\ref{lem:left-actions} and \cite[Lemma 4.1]{Voigt-BC} that
\[\C[U_q(2)] \to \mathrm{End}(\Gamma(\mathcal{E}_{0,n})), \quad a \mapsto a \blacktriangleright_1 (-), \]
extends to a $*$-homomorphism $C(U_q(2)) \to B(\HH_{0,n})$. Since the multiplication by $x^m$ defines a unitary isomorphism $\HH_{0,n}\cong\HH_{m,n}$ and is a $\C[U_q(2)]$-module map, it follows that $\blacktriangleright_1$ defines a representation of $C(U_q(2))$ on $\HH_{m,n}$ for all $m$ and $n$. Together with the restriction of the left regular representation of $U_q(2)$ this gives us a unitary representation of the Drinfeld double $D(U_q(2))$ on $\HH_{m,n}$. (The same is true by the obvious analogue of \cite[Lemma 4.1]{Voigt-BC} for $U_q(2)$, without relying on Lemma~\ref{lem:left-actions}.)

We also need the following definition:
\[C(\mathcal{E}_{m,n}) := \overline{\Gamma(\mathcal{E}_{m,n})}^{\|\cdot \|} \subset C(U_q(2)). \]
This is a $D(U_q(2))$-equivariant C$^*$-Hilbert $C(S^2_q)$-module, with the $U_q(2)$-action given by left translations and the $\C[U_q(2)]$-module structure given by the adjoint action $\blacktriangleright_0$.

\smallskip

We are finally ready to define some Kasparov modules. We construct an element
\[ \tilde{D} = [(\HH_{0,1} \oplus \HH_{1,-1}, \phi\oplus\phi,\tilde F)] \in KK^{D(U_q(2))}(C(S^2_q), \C) .\]
Here $\phi$ comes from the inclusion $C(S^2_q) \subset B(L^2(U_q(2))$ and the operator $\tilde F$ is defined by
\[ \tilde{F} := \begin{pmatrix}
1 & 0 \\
0 & x
\end{pmatrix}\begin{pmatrix}
0 & S \\
S^{-1} & 0
\end{pmatrix}\begin{pmatrix}
1 & 0 \\
0 & x^*
\end{pmatrix} \colon \HH_{0,1} \oplus \HH_{1,-1} \to \HH_{0,1} \oplus \HH_{1,-1}, \]
where $x\colon \HH_{0,-1} \to \HH_{1,-1}$ acts by multiplication and $S\colon \HH_{0,1} \to \HH_{0,-1}$ is determined by
$$
S(e_{i,-1/2}^l) := e_{i,1/2}^l,
$$
where $e^l_{ij}=u^l_{ij}/\|u^l_{ij}\|_2$. The map $\tilde F$ is $U_q(2)$-equivariant, since by Lemma~\ref{lem:weights} the map
$$
u^l_{ij}x^m=v^l_{ij}x^{m+l+j}\mapsto v^l_{i,j+1}x^{m+l+j}=u^l_{i,j+1}x^{m-1}
$$
is so. Since $\begin{pmatrix}
0 & S \\
S^{-1} & 0
\end{pmatrix}$ is the operator $F$ considered in~\cite{Voigt-BC}, it follows from Lemma~\ref{lem:left-actions} (for $t=1$) and results of~\cite[Section~4]{Voigt-BC} that $\tilde F$ is also $\C[U_q(2)]$-equivariant. We conclude that~$\tilde{F}$~is $D(U_q(2))$-equivariant and we indeed get an equivariant Kasparov module.

The $D(U_q(2))$-equivariant C$^*$-Hilbert $C(S^2_q)$-module $C(\mathcal{E}_{m,n})$ defines an element
\[ [[\mathcal{E}_{m,n}]] \in KK^{D(U_q(2))}(C(S^2_q),C(S^2_q)).\]
Put $\tilde{D}_k = [[\mathcal{E}_{0,k}]] \hat{\otimes} [\tilde{D}]$ and $[\mathcal{E}_{m,n}] = [u] \hat{\otimes} [[\mathcal{E}_{m,n}]]$, where $[u] \in KK^{D(U_q(2))}(\C, C(S^2_q))$ is induced by the inclusion $u\colon\C\to C(S^2_q)$.

\begin{proposition}
\label{prop:KK-Podles}
The classes $(-[\mathcal{E}_{-1,1}]) \oplus [\mathcal{E}_{0,0}]\in KK^{D(U_q(2))}(\C\oplus\C,C(S^2_q))$ and $[\tilde{D}]\oplus [\tilde{D}_{-1}]\in KK^{D(U_q(2))}(C(S^2_q),\C\oplus\C)$ are inverse to each other.
\end{proposition}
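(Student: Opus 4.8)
The plan is to verify directly that the two Kasparov products
\[
\big((-[\mathcal{E}_{-1,1}])\oplus[\mathcal{E}_{0,0}]\big)\hat\otimes\big([\tilde D]\oplus[\tilde D_{-1}]\big)\in KK^{D(U_q(2))}(\C\oplus\C,\C\oplus\C)
\]
and
\[
\big([\tilde D]\oplus[\tilde D_{-1}]\big)\hat\otimes\big((-[\mathcal{E}_{-1,1}])\oplus[\mathcal{E}_{0,0}]\big)\in KK^{D(U_q(2))}(C(S^2_q),C(S^2_q))
\]
are the respective identity elements. Both computations run parallel to the corresponding ones for $D(SU_q(2))$ in~\cite{Voigt-BC}; the only new feature is the extra $\Z$-grading carried by the central unitary $x$, which is exactly what Lemmas~\ref{lem:weights} and~\ref{lem:left-actions} are designed to control.

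For the first product one has to show that the $2\times2$ matrix with entries $[\mathcal{E}_{m,n}]\hat\otimes[\tilde D_k]$ is the identity. Internal multiplication in $C(U_q(2))$ gives a $D(U_q(2))$-equivariant unitary $C(\mathcal{E}_{m,n})\otimes_{C(S^2_q)}C(\mathcal{E}_{m',n'})\cong C(\mathcal{E}_{m+m',n+n'})$ of Hilbert $C(S^2_q)$-modules (the right $T$-weights add), so $[[\mathcal{E}_{m,n}]]\hat\otimes[[\mathcal{E}_{0,k}]]=[[\mathcal{E}_{m,n+k}]]$ and hence $[\mathcal{E}_{m,n}]\hat\otimes[\tilde D_k]=[\mathcal{E}_{m,n+k}]\hat\otimes[\tilde D]$. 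Thus everything reduces to the four index pairings $[\mathcal{E}_{m,n}]\hat\otimes[\tilde D]\in KK^{D(U_q(2))}(\C,\C)$ with $(m,n)\in\{(-1,1),(-1,0),(0,0),(0,-1)\}$. Each such pairing is the $D(U_q(2))$-equivariant index of the operator $\tilde F$ twisted by the line bundle $\mathcal{E}_{m,n}$, and kernel and cokernel are computed from the Peter--Weyl decomposition of $\Gamma(\mathcal{E}_{m,n})$ together with Lemma~\ref{lem:weights}, exactly as in~\cite[Section~4]{Voigt-BC}. The outcome should be: the operators twisted by $\mathcal{E}_{-1,0}$ and $\mathcal{E}_{0,0}$ are equivariantly invertible (no harmonic sections), so those two pairings vanish; the operators twisted by $\mathcal{E}_{0,-1}$ and by $\mathcal{E}_{-1,1}$ have a one-dimensional kernel, resp.\ cokernel, and one must check that these harmonic (co)kernels, which a priori carry a $\T$-weight, in fact have weight zero, i.e.\ are the trivial Yetter--Drinfeld module. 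This last point is the reason for using $\mathcal{E}_{-1,1}$ rather than $\mathcal{E}_{0,1}$: the $m=-1$ factor cancels the residual $\T$-weight produced by the $n$-twist. One then obtains the identity matrix.

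For the second product, $[\tilde D]\hat\otimes(-[\mathcal{E}_{-1,1}])+[\tilde D_{-1}]\hat\otimes[\mathcal{E}_{0,0}]=\id_{C(S^2_q)}$, I would follow Voigt's argument for $D(SU_q(2))$: one represents the left-hand side as an explicit Kasparov $C(S^2_q)$-bimodule and exhibits an operator homotopy connecting it to the identity module, using the structure of the Podle\'s sphere $C(S^2_q)$ and its extension by the compacts. By Lemma~\ref{lem:left-actions} the $C(S^2_q)$-actions occurring here, and by the computation following Lemma~\ref{lem:weights} the operator $\tilde F$, all factor through the $SU_q(2)$-picture $C(S^2_q)=C(SU_q(2)/U(1))$ of the Podle\'s sphere, so Voigt's homotopy applies once one checks that it is compatible with the additional left $\T$-coaction, which holds by construction.

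The main obstacle is not any single estimate but the need to carry the \emph{full} $D(U_q(2))$-equivariance --- both the left $U_q(2)$-coaction and the $\blacktriangleright$-module structure --- through every step, rather than just the underlying non-equivariant $KK$-classes. Lemmas~\ref{lem:weights} and~\ref{lem:left-actions} reduce most of this to bookkeeping, but two places genuinely require it: verifying that the harmonic (co)kernels in the index computations carry $\T$-weight zero (this is precisely why the bundle $\mathcal{E}_{-1,1}$ and the operators $\tilde D,\tilde D_{-1}$ are chosen the way they are), and checking that the homotopy imported from~\cite{Voigt-BC} respects the extra $\T$-coaction.
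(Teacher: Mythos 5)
Your proposal takes essentially the same route as the paper: both defer to Voigt's proof of the corresponding statements for $D(SU_q(2))$ in \cite{Voigt-BC} and reduce the new content to checking that the extra grading and coaction carried by the central unitary $x$ pass through every step, using Lemmas~\ref{lem:weights} and~\ref{lem:left-actions} exactly at the two places you identify. The only point the paper makes more explicit, and which your ``holds by construction'' slightly glosses over, is that Voigt's key homotopy runs through the complementary-series representations $\omega_t$ of $D(SU_q(2))$, which must be extended to $D(U_q(2))$-representations by setting $\omega_t(x)=1$; this is precisely what Lemma~\ref{lem:left-actions} is invoked for.
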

\begin{proof}
This is proved by following the arguments in \cite[Theorems 4.6 and~4.7]{Voigt-BC}. The key technical part there is the construction of a continuous family of representations $\omega_t\colon C(SU_q(2)) \to B(\HH_{0,0})$, $0\le t\le1$, by rescaling matrix coefficients of the representation $$\C[SU_q(2)] \to \mathrm{End}(\Gamma(\mathcal{E}_{0,0}))$$ defined by $\triangleright_t$. Combined with the representation of $SU_q(2)$ on $\HH_{0,0}$ this gives complementary series representations of $D(SU_q(2))$. Up to a $U_q(2)$-equivariant unitary equivalence, $\omega_t$ is obtained by keeping $\triangleright_t$ intact but changing the scalar product on $\Gamma(\E_{0,0})$. By Lemma~\ref{lem:left-actions} it follows then that if we extend~$\omega_{t}$ to $C(U_q(2))$ by setting $\omega_{t}(x) = 1$, then this extension together with the representation of $U_q(2)$ on $\HH_{0,0}$ define a unitary representation of~$D(U_q(2))$.

It remains to check that all $SU_q(2)$-equivariant maps used in the proof give rise to $U_q(2)$-equivariant maps. Heuristically, the reason why this should work is that, for every $m,n\in\Z$, the representations of $U_q(2)$ and $SU_q(2)$ on $\HH_{m,n}$ define the same images of $C^*(U_q(2))$ and $C^*(SU_q(2))$ in $B(\HH_{m,n})$, so every $SU_q(2)$-equivariant operator on $\HH_{m,n}$ is automatically $U_q(2)$-equivariant.
\end{proof}

From this we then get the following main result.

\begin{theorem}
For every $0<q<1$, the localizing subcategory of the triangulated category $KK^{U_q(2)}$ generated by the separable C$^*$-algebras with trivial $U_q(2)$-action coincides with $KK^{U_q(2)}$.
\end{theorem}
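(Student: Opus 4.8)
The plan is to run Voigt's proof of the Baum--Connes property for the dual of $SU_q(2)$ from~\cite{Voigt-BC}, replacing $SU_q(2)$ by $U_q(2)$ throughout. That proof is organised around the Podle\'{s} sphere $S^2_q$ and the Drinfeld double, and its genuine analytic content --- a Dirac-type Kasparov module on $S^2_q$ together with a homotopy through complementary-series representations --- has already been transported to the present setting in the construction of $\tilde D$, $\tilde D_k$, $[[\E_{m,n}]]$ and, crucially, in Proposition~\ref{prop:KK-Podles}. What remains is a formal argument in the triangulated category $KK^{U_q(2)}$, which I sketch as follows.

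By the triangulated-category machinery of Meyer--Nest, in the form used in~\cite{Voigt-BC}, two things must be verified. First, that the dual of $U_q(2)$ is torsion-free in the relevant (module-categorical) sense; since the representation category of $U_q(2)$ is that of $U(2)$ with rescaled quantum dimensions --- precisely the fact behind the coamenability of $U_q(2)$ in Lemma~\ref{lem:reduced} --- this follows by the same reasoning as for the dual of $SU_q(2)$. Second, that the ``$\gamma$-element'' for the dual of $U_q(2)$ equals~$1$. Here the inclusion $u\colon\C\to C(S^2_q)$ provides the dual-Dirac element and $\tilde D$ the Dirac element, and Proposition~\ref{prop:KK-Podles}, once the auxiliary terms $[\E_{-1,1}]$ and $\tilde D_{-1}$ are incorporated, says precisely that the two resulting classes are mutually inverse in $KK^{D(U_q(2))}$; this is the statement $\gamma=1$. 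Feeding $\gamma=1$ and torsion-freeness into the machinery then gives $\mathcal L=KK^{U_q(2)}$, where $\mathcal L$ is the localizing subcategory generated by the trivial actions. Concretely, $\gamma=1$ unwinds to a $KK^{D(U_q(2))}$-equivalence $C(S^2_q)\simeq\C\oplus\C$; forgetting the Yetter--Drinfeld module structure turns this into a $KK^{U_q(2)}$-equivalence between $C(U_q(2)/T)=C(S^2_q)$, carrying its translation coaction, and a direct sum of trivial-action algebras, and torsion-freeness then propagates this to show that every separable $U_q(2)$-C$^*$-algebra lies in $\mathcal L$, exactly as in~\cite{Voigt-BC}.

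The one point requiring attention throughout is that every $SU_q(2)$-equivariant morphism occurring in~\cite{Voigt-BC} has a $U_q(2)$-equivariant counterpart: the complementary-series homotopies $\omega_t$, the intertwiners defining the Dirac and dual-Dirac classes, and the operators and Hilbert modules relating the line bundles $\E_{m,n}$ over $S^2_q$. This is guaranteed, exactly as in the proof of Proposition~\ref{prop:KK-Podles}, by Lemmas~\ref{lem:weights} and~\ref{lem:left-actions}: they ensure that on each $\HH_{m,n}$ the images of $C^*(SU_q(2))$ and $C^*(U_q(2))$ coincide, so an $SU_q(2)$-equivariant operator there is automatically $U_q(2)$-equivariant, and that the Yetter--Drinfeld module action $\blacktriangleright_t$ is entirely governed by the $SU_q(2)$-action $\triangleright_t$. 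The main obstacle is therefore organisational rather than conceptual: one must pass through Voigt's argument step by step and check that no step depends on a feature special to $SU_q(2)$ that the extra central circle in $U_q(2)$ would destroy. The appendix's lemmas are set up precisely so that this check is routine --- which is why the theorem follows from~\cite{Voigt-BC} with only minor modifications and no new ideas.
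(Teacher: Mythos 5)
Your route is the paper's: reduce everything to Voigt's proof of \cite[Theorem~6.1]{Voigt-BC}, with the genuinely new inputs being Proposition~\ref{prop:KK-Podles} and the equivariance upgrades provided by Lemmas~\ref{lem:weights} and~\ref{lem:left-actions} (in particular the observation that on each $\HH_{m,n}$ the images of $C^*(SU_q(2))$ and $C^*(U_q(2))$ coincide, so $SU_q(2)$-equivariant operators are automatically $U_q(2)$-equivariant). One ingredient is misidentified, however. Besides the $KK^{D(U_q(2))}$-equivalence $C(U_q(2)/T)\simeq\C\oplus\C$, what Voigt's argument actually consumes is that the \emph{classical} maximal torus $T\subset U_q(2)$ has discrete dual $\hat T\cong\Z\oplus\Z$, which is torsion-free and satisfies the strong Baum--Connes conjecture: one restricts a given $U_q(2)$-C$^*$-algebra to $T$, applies strong BC for $\Z\oplus\Z$ there, induces back, and then splits off the original algebra as a retract using the Podl\'es-sphere equivalence, both in its $D(U_q(2))$-equivariant form and after forgetting down to $KK^{U_q(2)}$. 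Torsion-freeness of the discrete dual of $U_q(2)$ itself is not what is fed into the machinery at this point, and your justification of it---that $\Rep U_q(2)$ is ``that of $U(2)$ with rescaled quantum dimensions''---would not stand on its own anyway: torsion-freeness is not a fusion-ring invariant (it is invariant under monoidal equivalence, but $U_q(2)$ is not monoidally equivalent to $U(2)$ for $q<1$, precisely because quantum dimensions differ), so as written that step has no proof behind it. Since the claim is superfluous once replaced by strong BC for $\hat T\cong\Z\oplus\Z$ (a torsion-free amenable group, so Higson--Kasparov, or Pimsner--Voiculescu applied twice), the rest of your sketch---Proposition~\ref{prop:KK-Podles} playing the role of $\gamma=1$, the line-bundle and complementary-series data already transported, and the step-by-step equivariance check---is exactly the paper's proof.
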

\begin{proof}
The proof follows the lines of \cite[Theorem 6.1]{Voigt-BC}, which establishes the same result for~$SU_q(2)$. The group $\hat{T} \cong \Z \oplus \Z$ is torsion-free and satisfies the strong Baum--Connes conjecture, and $C(U_q(2)/T) \cong \C \oplus \C$ in $KK^{D(U_q(2))}$ by the previous proposition. The corresponding properties for $U(1)$ and $C(SU_q(2)/U(1))$ are the essential ingredients used in the proof of \cite[Theorem 6.1]{Voigt-BC}. Thus the same proof works in our setting with $T$ instead of $U(1)$ and $U_q(2)$ instead of $SU_q(2)$.
\end{proof}

The result is known to be true for $q=1$ as well, see~\cite[Proposition~3.3]{MN}.

\bigskip

\printbibliography
\bigskip

\end{document}